\numberwithin{equation}{section}
\newtheorem{conjecture}{Conjecture}
\newtheorem{lemma}{Lemma}
\newtheorem{theorem}{Theorem}
\newtheorem{corollary}{Corollary}
\newtheorem{remark}{Remark}
\newtheorem{proposition}{Proposition}
\newtheorem{example}{Example}
\begin{document}
\title{Weakly Hadamard diagonalizable graphs and Quantum State Transfer}

\author{Darian McLaren\textsuperscript{1}, Hermie Monterde\textsuperscript{2}, and Sarah Plosker\textsuperscript{3}}
\maketitle

\addtocounter{footnote}{1}
\footnotetext{Department of Mathematics and Computer Science, Brandon University,
Brandon, MB R7A 6A9, Canada; da2mclaren@uwaterloo.ca}\addtocounter{footnote}{1}
\footnotetext{Department of Mathematics, University of Manitoba, Winnipeg, MB, Canada R3T 2N2; monterdh@myumanitoba.ca}\addtocounter{footnote}{1}
\footnotetext{Department of Mathematics and Computer Science, Brandon University,
Brandon, MB R7A 6A9, Canada; Department of Mathematics, University of Manitoba, Winnipeg, MB, Canada R3T 2N2;  ploskers@brandonu.ca}

\begin{abstract} Hadamard diagonalizable graphs are undirected graphs for which the corresponding Laplacian is diagonalizable by a Hadamard matrix. Such graphs have been studied in the context of quantum state transfer. Recently, the concept of a  weak Hadamard matrix was introduced:  a $\{-1,0, 1\}$-matrix $P$ such that $PP^T$ is tridiagonal, as well as the concept of weakly Hadamard diagonalizable graphs.  We therefore naturally explore quantum state transfer in these generalized Hadamards. Given the infancy of the topic, we provide numerous properties and constructions of weak Hamard matrices and weakly Hadamard diagonlizable graphs in order to better understand them. 
\medskip

	\noindent \textbf{Keywords:}   quantum state transfer, Hadamard matrices, weakly Hadamard diagonalizable graphs\\
	
	\noindent \textbf{MSC2010 Classification:}  15A18;  
 05C50; 
 81P45 
\end{abstract}

\section{Introduction}

Let $\mathcal{M}_n$ be the space of real $n\times n$ matrices. 
A Hadamard matrix $H\in\mathcal{M}_n$ is a matrix whose entries are either 1 or -1 and satisfies
$H^T H=n I_n$,
where $I_n$ is the $n\times n$ identity matrix. The columns of a Hadamard matrix are therefore mutually orthogonal.

The study of Hadamard diagonalizable graphs---namely, undirected graphs $X$ for which the associated Laplacian matrix  can be diagonalized by
some Hadamard matrix, was first initiated in \cite{barik2011}. More recently, such graphs have been found to be useful in the study of quantum state transfer, where one considers the time evolutionary operator $U(t)=e^{(itL(X))}$ for the graph $X$. In this line of inquiry, one is interested in finding graphs that allow for desirable types of quantum state transfer, such as perfect state transfer, pretty good state transfer, and fractional revival.   A  wide variety of new graphs that exhibit perfect state transfer, for both weighted and unweighted graphs, were found in \cite{johnston2017}, and \cite{chan2020complex} generalizes the concept of Hadamard diagonalizable graphs from real to complex Hadamard matrices, giving necessary and sufficient conditions, and examples, of $(\alpha, \beta)$-fractional revival and perfect state transfer in this setting.  

Recently, a generalization of Hadamard matrices was proposed in \cite{adm2021}, where a matrix $P\in \mathcal M_n$ is called a \emph{weak Hadamard matrix} if all entries of $P$ are from $\{-1, 0, 1\}$ and $P^TP$ is a tridiagonal matrix.
The generalization of weak Hadamard relaxes the orthogonality  condition of a Hadamard to allow any two consecutive columns to not necessarily be orthogonal. The concept was introduced to study weakly Hadamard diagonalizable graphs. Many properties of weak Hadamard matrices have yet to be explored.

Our aim herein is to explore the natural next question of quantum state transfer in weakly Hadamard diagonalizable graphs. Before doing so, it is necessary to take a step back to uncover properties of weak Hadamard matrices and weakly Hadamard diagonalizable graphs, to build up constructions and examples of such matrices and graphs, to motivate the utility of considering these graphs in the context of quantum state transfer. Indeed, if there were few instances of  weak Hadamard matrices and weakly Hadamard diagonalizable graphs, then it would be of little value to physicists to know that perfect state transfer occurs in only a handful of graphs. That being said, we frequently assume some additional structure of the columns of the weak Hadamard matrices beyond quasi-orthogonality in our study of weakly Hadamard diagonalizable graphs and quantum state transfer in such graphs (typically either assuming all vectors not equal to the all-ones vector are orthogonal to it, or assuming orthogonality between all vectors). This is due to the fact that quasi-orthogonality is not easy to exploit; in any case, over 44\% of the graphs on at most nine vertices that are weakly Hadamard diagonalizable are diagonalized by a weak Hadamard with pairwise orthogonal columns (in particular, 100\% for $n=4$ and over 88\% for $n=8$) \cite{johnston2023laplacian}.

In Section \ref{sec:preliminaries} we consider some mathematical preliminaries to our study, in particular, we consider algebraic and combinatorial properties of weak Hadamard matrices, and describe a number of methods for constructing such matrices.  In Section \ref{sec:whd} we consider spectral properties of weakly Hadamard diagonalizable  graphs. We also construct examples of weakly Hadamard diagonalizable graphs based on known ones. Finally, in Section \ref{sec:qst}, we look at quantum state transfer in weakly Hadamard diagonalizable graphs, first focusing on strong cospectrality, which is a necessary condition for many types of quantum state transfer; we then consider perfect state transfer and graph operations preserving perfect state transfer in weakly Hadamard diagonalizable graphs.

\section{Preliminaries}\label{sec:preliminaries}
In \cite{adm2021}, weak Hadamard matrices were introduced to study graphs whose Laplacian matrix is diagonalized by a weak Hadamard matrix. Theirs was the first study of this type of matrix. Therefore, many properties and constructions of weak Hadamard matrices have not yet been developed. 

\subsection{Properties of Weak Hadamard Matrices}
Here, we ask what   are the combinatorial and spectral properties of weak Hadamard matrices, and  which matrix operations preserve the property of being a weak Hadamard matrix.

The following are some easily-provable properties of weak Hadamard matrices. Since the topic of weak Hadamard matrices is in its infancy, these results have not been observed elsewhere. 
It is well-known that Hadamard matrices of order $n>2$ satisfy  $n\equiv 0$ (mod 4). In the following theorem,  we present sufficient conditions for when the same holds for weak Hadamard matrices. We will use the notation $\mathbf{1}$ to denote the all-ones vector of appropriate size. 

\begin{theorem}
\label{WHDcombi}
Let $P\in \mathcal{M}_n$ be a weak Hadamard matrix with two columns $\mathbf{x}$ and $\mathbf{z}$ that are orthogonal to $\mathbf{1}$. Then the following statements hold:
\begin{enumerate}
\item The vector $\mathbf{x}$ has an even number of non-zero entries, exactly half of which are $1$'s. 
\item If $\mathbf{x}$ has $k$ number of $1$'s and $r$ number of $0$'s, then $r=n-2k$. If $k$ is even, then $n\equiv r$ (mod 4).
\item If $\mathbf{x}$ has all entries non-zero, then $n=2k$. Moreover, $k$ is even if and only if $n\equiv 0$ (mod 4).
\item If (i) $k$ is even and $r\equiv 0$ (mod 4) or (ii) all entries of both $\mathbf{x}$ and $\mathbf{z}$ are non-zero, then $n\equiv 0$ (mod 4).
\end{enumerate}
\end{theorem}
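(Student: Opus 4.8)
The plan is to separate the purely combinatorial bookkeeping --- parts~(1)--(3) and case~(i) of part~(4) --- from the single place where tridiagonality of $P^TP$ actually enters, namely case~(ii) of part~(4). Throughout, for a $\{-1,0,1\}$-vector $\mathbf y$ I write $p,q,r$ for the numbers of entries of $\mathbf y$ equal to $1,-1,0$ respectively, so that $p+q+r=n$ and $\mathbf y^T\mathbf 1=p-q$.

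For~(1), the hypothesis $\mathbf x^T\mathbf 1=0$ forces $p=q=:k$, so $\mathbf x$ has $2k$ nonzero entries --- an even number --- of which exactly $k$, that is half, equal $1$. For~(2), substituting into $p+q+r=n$ gives $r=n-2k$; if moreover $k$ is even then $2k\equiv 0\pmod 4$, hence $n\equiv r\pmod 4$. For~(3), if every entry of $\mathbf x$ is nonzero then $r=0$, so $n=2k$, and then $n\equiv 0\pmod 4$ precisely when $k$ is even. Case~(i) of part~(4) is then immediate from~(2): with $k$ even we get $n\equiv r\pmod 4$, and combining with $r\equiv 0\pmod 4$ gives $n\equiv 0\pmod 4$. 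All of this is elementary counting, and I anticipate no difficulty.

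For case~(ii) of part~(4): since $\mathbf x$ and $\mathbf z$ are each all-nonzero and orthogonal to $\mathbf 1$, part~(1) shows each is a $\{-1,1\}$-vector with exactly $n/2$ entries equal to $1$; set $A=\{i:x_i=1\}$ and $B=\{i:z_i=1\}$, so $|A|=|B|=n/2$. A count of agreements versus disagreements between $\mathbf x$ and $\mathbf z$ gives $\mathbf x^T\mathbf z=4|A\cap B|-n$. Since $P^TP$ is tridiagonal we have $\mathbf x^T\mathbf z=0$, and therefore $n=4|A\cap B|\equiv 0\pmod 4$. The step I expect to be the crux, and would state most carefully, is this last appeal to tridiagonality: $\mathbf x^T\mathbf z$ is an entry of $P^TP$, so it is forced to vanish only when $\mathbf x$ and $\mathbf z$ are non-consecutive columns of $P$ --- for neighbouring columns the product lies on the first sub/super-diagonal and need not be zero. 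So case~(ii) really rests on $\mathbf x$ and $\mathbf z$ being orthogonal to one another, which is automatic when they are non-consecutive columns; I would make this explicit in the hypothesis, after which the rest is the routine counting above.
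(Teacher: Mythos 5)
Your proof is correct and follows essentially the same route as the paper's: elementary counting of $\pm1$ entries gives (1)--(3) and case (i) of (4), and case (ii) is settled by the orthogonality of $\mathbf{x}$ and $\mathbf{z}$, where your identity $\mathbf{x}^T\mathbf{z}=4|A\cap B|-n$ is exactly the paper's computation ($k_1=k_2$ and $n=4k_1$ after relabelling so that $\mathbf{z}=[\mathbf{1},-\mathbf{1}]^T$) in different notation. Your closing caveat is well taken: the paper's proof simply asserts $\mathbf{z}^T\mathbf{x}=0$, which tridiagonality of $P^TP$ only guarantees when $\mathbf{x}$ and $\mathbf{z}$ are non-consecutive columns, so your suggestion to make that orthogonality (or non-consecutiveness) explicit in the hypothesis is a genuine, if minor, sharpening of the statement; in the paper's intended application (pairwise orthogonal columns, e.g.\ Hadamard matrices) it holds automatically.
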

\begin{proof}
Suppose $\mathbf{x}$ has $k$ number of $1$'s and $m$ number of $-1$'s. Since $\mathbf{1}^T\mathbf{x}=0$, we get $k=m$, and so $\mathbf{x}$ has $r=n-2k$ zero entries. In particular, if $\mathbf{x}$ has all entries non-zero, then $n=2k$. This proves (1)-(3).

Let us now prove (4). Condition (i) yields the desired conclusion by (2). Now, suppose condition (ii) holds. Then (3) implies that $n$ is even and by a suitable relabelling of the vertices of $X$, we may assume that $z=[\mathbf{1},-\mathbf{1}]^T$. As $\mathbf{z}^T\mathbf{x}=0$, we get
\begin{equation*}
\sum_{j=1}^{\frac{n}{2}}\mathbf{x}_j\ =\sum_{j=\frac{n}{2}+1}^n\mathbf{x}_j.
\end{equation*}
Assume that for the first $\frac{n}{2}$ entries of $\mathbf{x}$, $k_1$ of them are equal to 1 and $k_2$ of them are equal to $-1$. Then $\sum_{j=1}^{\frac{n}{2}}\mathbf{x}_j=k_1-k_2$. Moreover, $k-k_1$ of the latter $\frac{n}{2}$ entries of $\mathbf{x}$ are equal to 1, while $k-k_2$ of them are equal to $-1$, which gives us $\sum_{j=\frac{n}{2}+1}^{n}\mathbf{x}_j=(k-k_1)-(k-k_2)=k_2-k_1$. By the above equation, $k_1=k_2$. Since $\mathbf{x}$ has all entries non-zero, $k_1+k_2=\frac{n}{2}$, and so $n=2(k_1+k_2)=4k_1$. Thus, $n\equiv 0$ (mod 4).\end{proof}

Theorem \ref{WHDcombi}(1-3) require only that a single column $\mathbf{x}$ of $P$ has entries in $\{-1,0,1\}$. Thus, the theorem holds for a wider range of matrices: matrices formed by the Laplacian eigenvectors of trivalent graphs \cite{Caputo}. 

The rows and columns of a Hadamard matrix can be permuted, and any row or column can be multiplied by -1, and the resulting matrix is still a Hadamard matrix. Thus, it is always possible to arrange to have the first row and the first column of a Hadamard matrix to be all 1's; such a Hadamard matrix is said to be \emph{normalized}. Although the rows of a weak Hadamard matrix can be permuted, and any row or column can be multiplied by -1, column permutations in general may affect the quasi-orthogonality of the columns. One can clearly see that the identity matrix cannot be normalized, and thus not all weak Hadamard matrices can be normalized. 

The following results provide some structure of the columns of a weak Hadamard matrix for odd dimensions.

\begin{corollary}
\label{lem:OddZero}
    Let $P\in \mathcal{M}_n$ be a normalized weak Hadamard matrix of dimension $n=2k+1$ with pairwise orthogonal columns. Then each column of $P$ (other than $\mathbf{1}$) has three or more zero entries.
\end{corollary}

\begin{corollary}\label{cor:5}    
   There is no normalized weak Hadamard matrix of dimension 5 that has pairwise orthogonal columns.
\end{corollary}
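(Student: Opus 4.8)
The plan is to obtain a contradiction directly from Lemma~\ref{lem:OddZero} together with Theorem~\ref{WHDcombi}(1). Suppose for contradiction that $P\in\mathcal{M}_5$ is a normalized weak Hadamard matrix with pairwise orthogonal columns, and write $P=[\textbf{1},\textbf{x}_2,\textbf{x}_3,\textbf{x}_4,\textbf{x}_5]$. Because $P$ is normalized, its first column is $\textbf{1}$, so each $\textbf{x}_j$ with $j\geq 2$ is orthogonal to $\textbf{1}$. By Theorem~\ref{WHDcombi}(1) each such $\textbf{x}_j$ then has an even number of nonzero entries, exactly half of them equal to $1$; by Lemma~\ref{lem:OddZero} (applied with $n=5=2(2)+1$) each such $\textbf{x}_j$ has at least three zero entries, hence at most two nonzero entries. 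The only count compatible with ``even number of nonzeros, half of them $1$'s'' and ``at most two nonzeros'' is \emph{exactly two} nonzero entries, one equal to $1$ and one equal to $-1$; the alternative of zero nonzero entries is excluded since normalization forces the first entry of every column to be $1$. Thus each $\textbf{x}_j$ ($j\geq 2$) is, up to sign, of the form $\textbf{e}_i-\textbf{e}_{i'}$ for a pair $\{i,i'\}$ of distinct coordinates, where $\textbf{e}_1,\ldots,\textbf{e}_5$ denote the standard basis vectors.

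Next I would record when two such vectors can be orthogonal. A short computation of $(\textbf{e}_i-\textbf{e}_{i'})^T(\textbf{e}_j-\textbf{e}_{j'})$ shows that it vanishes precisely when the pairs $\{i,i'\}$ and $\{j,j'\}$ are disjoint: if they share exactly one index the inner product is $\pm 1$, and if they share both indices the two columns are $\pm$ one another, contradicting orthogonality of distinct columns. Hence the four $2$-element subsets of $\{1,2,3,4,5\}$ associated with $\textbf{x}_2,\textbf{x}_3,\textbf{x}_4,\textbf{x}_5$ must be pairwise disjoint. But four pairwise disjoint pairs require eight coordinates, while we have only five—indeed even three pairwise disjoint pairs are impossible in a $5$-element set. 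This contradiction establishes that no such $P$ exists.

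I do not expect a genuine obstacle here: once Lemma~\ref{lem:OddZero} is available, the argument is elementary counting. The only place that needs a moment's care is ruling out a column with no nonzero entries at all, which is dispatched by the normalization hypothesis, and confirming the orthogonality criterion for vectors of the form $\textbf{e}_i-\textbf{e}_{i'}$, which is a two-line case check.
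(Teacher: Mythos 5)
Your proof is correct and follows essentially the same route as the paper: Lemma~\ref{lem:OddZero} (together with Theorem~\ref{WHDcombi}(1) and normalization) forces each column other than $\textbf{1}$ to have exactly two nonzero entries, one $1$ and one $-1$, and then pairwise orthogonality would require four pairwise disjoint $2$-subsets of a $5$-element coordinate set, which is impossible. You have merely spelled out the counting step that the paper leaves implicit in the sentence ``but no such matrix can have pairwise orthogonal columns.''
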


For a weak Hadamard $P$, let $a_P$ be the number of blocks of any size in the block diagonal form of $P^T P$, $b_P$ be the number of blocks of size greater than or equal to 2, and $c_P$ be the number of pairs of identical columns in $P$. With these definitions we now have the following theorem which counts the number of weak Hadamard matrices equivalent under column permutations. 
 
\begin{theorem}\label{thm:EquivalentWeak}
    Let $P$ be a weak Hadamard matrix. Then the number of equivalent weak Hadamard matrices attained by permuting of the columns of $P$, which we denote by $d(P)$, is given by
    \begin{align}
        d(P) = 2^{b_P/c_P} a_P!
    \end{align}
\end{theorem}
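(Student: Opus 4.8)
The plan is to translate the question into the subgroup of $S_n$ consisting of the \emph{valid} column permutations and then apply orbit--stabilizer. Write $M := P^T P = \mathrm{diag}(Y_1,\dots,Y_{a_P})$ in block-diagonal form with the $Y_i$ the maximal irreducible tridiagonal blocks, as in the discussion above. A column permutation $Q$ is valid (that is, $PQ$ is again a weak Hadamard matrix) precisely when $Q^T M Q$ is tridiagonal, and by that discussion this holds exactly when $Q$ first permutes the $a_P$ blocks among themselves and then reverses (reflects across the antidiagonal) some chosen subset of the blocks of size $\ge 2$. Hence the set $G$ of valid permutations is a subgroup of $S_n$ (generated by the block transpositions and block reversals), and $|G| = 2^{b_P}\,a_P!$: there are $a_P!$ block orderings and, independently, $2^{b_P}$ subsets of reversible blocks to flip, and these $2^{b_P}\,a_P!$ prescriptions are pairwise distinct as permutations — two distinct orderings move some block to a different slot, and for a fixed ordering, flipping a distinct set of size-$\ge 2$ blocks acts differently, since reversing $\ge 2$ coordinates is never the identity permutation. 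Now $d(P)$ is the cardinality of the image of $Q\mapsto PQ$ restricted to $G$, whose fibres are the cosets of $\mathrm{Stab}_G(P):=\{Q\in G: PQ=P\}$, so orbit--stabilizer gives $d(P)=|G|/|\mathrm{Stab}_G(P)| = 2^{b_P}\,a_P!\,/\,|\mathrm{Stab}_G(P)|$.

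It remains to identify $\mathrm{Stab}_G(P)$. A permutation fixes $P$ iff it only permutes equal columns among themselves, so I need the structure of repeated columns in a weak Hadamard matrix, recorded before the theorem: a column can be repeated at most once; two equal columns are orthogonal to every other column; and the two copies are adjacent, so they span a $2\times 2$ diagonal block of $M$. Granting this, the nontrivial equal-column classes are exactly the $c_P$ adjacent identical pairs, and distinct such pairs are disjoint (three consecutive equal columns would be three copies of one column). Therefore a permutation fixing $P$ is an independent choice, for each of the $c_P$ pairs, of whether to transpose its two members; this gives $\mathrm{Stab}(P)\cong(\mathbb{Z}/2)^{c_P}$, of order $2^{c_P}$, and since transposing such a pair reverses the associated $2\times 2$ block of $M$ — one of the allowed moves — we have $\mathrm{Stab}(P)\le G$, so $\mathrm{Stab}_G(P)=\mathrm{Stab}(P)$. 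Substituting, $d(P)=2^{b_P}\,a_P!\,/\,2^{c_P}=2^{\,b_P-c_P}\,a_P!$, the asserted value; it reproduces the small cases computed earlier, e.g.\ $a_P=2,\ b_P=1,\ c_P=0$ gives $4$, $a_P=3,\ b_P=1,\ c_P=0$ gives $12$, and a single duplicated pair ($a_P=1,\ b_P=c_P=1$) gives $1$.

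The step I expect to be the crux is the structural lemma on repeated columns, since the clean form $\mathrm{Stab}(P)\cong(\mathbb{Z}/2)^{c_P}$ — and hence the exponent $b_P-c_P$ — rests entirely on it. The generic part is quick: if two equal columns are nonzero, their inner product equals the (nonzero) squared norm, so tridiagonality of $M$ forces them to be consecutive, and a third consecutive copy would put a nonzero entry two off the diagonal; a short further argument then forces such a pair to be orthogonal to all other columns and to form an isolated $2\times 2$ block. The real subtlety is the zero column, which is equal to every other zero column with no adjacency constraint, so several zero columns inflate $\mathrm{Stab}(P)$ beyond $(\mathbb{Z}/2)^{c_P}$ and the formula fails in that case; thus the theorem implicitly assumes (or one must separately dispose of) weak Hadamard matrices with a repeated zero column. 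Pinning down that corner case is the part of the argument I would write most carefully.
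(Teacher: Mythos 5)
Your argument is correct and is essentially the paper's own: the published proof is just the discussion preceding the theorem (valid column permutations are exactly block permutations of $P^TP$ together with antidiagonal reversals of the blocks, and a duplicated column occurs at most twice, necessarily adjacently, giving a $2\times 2$ block), and your orbit--stabilizer computation $2^{b_P}a_P!/2^{c_P}$ is precisely the intended count --- note that the exponent printed in the theorem, $b_P/c_P$, is evidently a typo for $b_P-c_P$, which is the reading confirmed both by your small-case checks and by the paper's own $3$- and $4$-dimensional enumerations. Your caveat about repeated zero columns is a genuine (if minor) issue with the paper's statement as well, since the definition of a weak Hadamard matrix does not exclude zero columns, and with three or more of them both the ``at most two copies, necessarily adjacent'' claim and the formula fail; the intended setting is clearly matrices of eigenvectors, where columns are nonzero.
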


\begin{proof}
  
 A permutation on the columns of a weak Hadamard matrix $P$ is equivalent to right multiplication on $P$ by a permutation matrix $Q$. It follows then that $Q$ is valid permutation on $P$ if only if $Q^T P^T P Q$ is tridiagonal. Therefore, instead of considering permutations on the columns of $P$ we may instead consider permutations on the rows/columns of $P^T P$. 

 We set
\begin{align}
    P^T P = \text{diag}(Y_1, Y_2, \dots, Y_{a_P} ),
\end{align}
where the dimension of each block $Y_i$ may vary from 1 to $n$. We note that there are only two types of valid permutations: (i) permuting entire blocks, or (ii) mirroring a block across its antidiagonal.

A simple counting argument verifies the formula for $d(P)$.

\end{proof}


\subsection{Construction of Weak Hadamard Matrices}\label{sec:constr}

In this subsection, we present a number of different methods of constructing weak Hadamard matrices and weak Hadamard matrices with pairwise orthogonal columns. These may prove useful for future study of this newly defined concept. 

\begin{enumerate}
    \item 
 The standard Hadamard matrices may be defined recursively through Sylvester's construction by first setting  $H_1=[1]$, $
H_2=\begin{bmatrix}
1 & 1\\
1 & -1\\
\end{bmatrix}$,
and constructing larger Hadamard matrices through the recursive equation $H_{2^k}=H_2\otimes H_{2^{k -1}}$, where $k\geq 2$ is any positive integer. Here we explore the idea of an analogue of Sylvester's construction for the case of normalized weak Hadamard matrices to construct `standard (normalized) weak Hadamard matrices'.

First note that, for any (normalized) weak Hadamard matrix $A$, the matrix $H_2\otimes A$ is also a  (normalized) weak Hadamard matrix.
    
    In subsequent sections, we will require our weak Hadamard matrices to have pairwise orthogonal columns. We therefore  seek a matrix $P$ such that $H_2\otimes P$ is a normalized weak Hadamard matrix having pairwise orthogonal columns (but is not a Hadamard matrix). 
The weak Hadamard matrix that diagonalizes $K_4\backslash e$  (the complete graph minus an edge) is 
\begin{equation}
\label{Pone}
P_1= \begin{bmatrix}
        1 & 1& 1& 0\\
        1& -1& 1& 0\\
        1 & 0 & -1 & 1\\
        1 & 0 & -1 & -1
    \end{bmatrix}.
\end{equation}
For $\ell\geq 2$, define the matrix
\begin{equation}
\label{Pell}
P_{\ell}=\left[ \begin{array}{ccccc} P_{\ell-1}&P_{\ell-1} \\ P_{\ell-1}&-P_{\ell-1} \end{array}\right].
\end{equation}
For each $\ell\geq 1$, $P_\ell$ is a normalized weak Hadamard matrix of order $2^{\ell+1}$ with pairwise orthogonal columns that is not a Hadamard matrix.

  More generally, one can ask for which matrices $A$ is it the case that $A\otimes B$ 
  is a weak Hadamard matrix whenever $B$ is a weak Hadamard matrix. The following proposition provides the answer. The result follows by considering the entries of $(A^T \otimes B^T)(A\otimes B)=A^T A \otimes B^T B$.
  
  \begin{proposition}\label{prop:tpAB}
      Let $A$ and $B$ be weak Hadamard matrices with $B$ not equal to the all-zeros matrix. Then $A\otimes B$ is a weak Hadamard matrix if and only if  the columns of $A$ are all pairwise orthogonal. 
  \end{proposition}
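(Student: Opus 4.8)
The plan is to analyze the Gram matrix $(A\otimes B)^T(A\otimes B) = (A^TA)\otimes(B^TB)$ and determine exactly when this is tridiagonal. First I would note that both implications hinge on understanding the block structure: if $A$ is $m\times m$ and $B$ is $p\times p$, then $(A^TA)\otimes(B^TB)$ is an $m\times m$ array of $p\times p$ blocks, where the $(i,j)$ block equals $(A^TA)_{ij}\cdot(B^TB)$. For this $mp\times mp$ matrix to be tridiagonal, every entry outside the three central diagonals must vanish. The entries of block $(i,j)$ occupy rows $(i-1)p+1,\ldots,ip$ and columns $(j-1)p+1,\ldots,jp$, so whenever $|i-j|\ge 2$ the entire block lies strictly outside the tridiagonal band and must be zero; since $B\neq 0$ we have $B^TB\neq 0$, hence $(A^TA)_{ij}=0$ for all $|i-j|\ge 2$, i.e. $A^TA$ is tridiagonal, which (together with $A$ having $\{-1,0,1\}$ entries) is not quite "pairwise orthogonal columns" yet — so the off-diagonal-by-one blocks must be examined too.

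Next I would look at the blocks with $|i-j|=1$. Within such a block, the only positions that can be nonzero in a tridiagonal matrix are those adjacent to the corner touching the main diagonal: for the block just above the diagonal (say block $(i,i+1)$), only its bottom-left entry, in row $ip$ and column $ip+1$, lies on the superdiagonal; all other entries of that block are at distance $\ge 2$ from the main diagonal. That single entry equals $(A^TA)_{i,i+1}(B^TB)_{p,1}$. Meanwhile every other entry of the block equals $(A^TA)_{i,i+1}$ times some entry of $B^TB$, and these must all be zero. So if $(A^TA)_{i,i+1}\neq 0$, then every entry of $B^TB$ except possibly $(B^TB)_{p,1}$ and $(B^TB)_{1,p}$ is forced to be zero — but $B^TB$ is tridiagonal with nonzero diagonal entries (each column of $B$, being nonzero as $B\neq 0$... one must be slightly careful here, but $B^TB$ has at least one nonzero diagonal entry and the structure forces a contradiction unless $p=1$). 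Thus for $p\ge 2$ we conclude $(A^TA)_{i,i+1}=0$ for all $i$, and combined with the tridiagonality already shown, $A^TA$ is diagonal, i.e. the columns of $A$ are pairwise orthogonal. The edge case $p=1$ (where $B$ is a nonzero $1\times 1$ weak Hadamard matrix, so $B=[\pm 1]$) should be handled separately: then $A\otimes B=\pm A$, which is a weak Hadamard matrix iff $A$ is, and one checks this is consistent with the claim only if one interprets the statement appropriately — I would either restrict to $p\ge 2$ or note that for $p=1$ the claim as stated could fail and so implicitly $B$ is assumed of order $\ge 2$, matching the motivating examples.

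For the converse direction, suppose the columns of $A$ are pairwise orthogonal, so $A^TA$ is diagonal. Then $(A^TA)\otimes(B^TB)$ is a block-diagonal matrix whose diagonal blocks are scalar multiples of $B^TB$, each of which is tridiagonal; a block-diagonal matrix with tridiagonal blocks is itself tridiagonal (the blocks sit along the diagonal and their bands align with the global band). Also $A\otimes B$ has entries in $\{-1,0,1\}$ since both $A$ and $B$ do. Hence $A\otimes B$ is a weak Hadamard matrix, completing the equivalence.

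The main obstacle I anticipate is the careful bookkeeping in the $|i-j|=1$ case: pinning down precisely which single entry of an off-by-one block can survive in a tridiagonal matrix, and then squeezing out the contradiction from the near-vanishing of $B^TB$ while correctly accounting for the degenerate small cases ($p=1$, or columns of $B$ that are zero so that a diagonal entry of $B^TB$ vanishes). Everything else is the routine identity $(A\otimes B)^T(A\otimes B)=(A^TA)\otimes(B^TB)$ together with the elementary fact that a Kronecker product with a tridiagonal factor is tridiagonal exactly when the other factor is diagonal.
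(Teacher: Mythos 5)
Your proof is correct and takes essentially the same route as the paper, which simply says the result ``follows by considering the entries of $(A^T\otimes B^T)(A\otimes B)=A^TA\otimes B^TB$''; your block-by-block analysis (off-band blocks forcing $(A^TA)_{ij}=0$ for $|i-j|\ge 2$, and the single surviving corner entry of the $|i-j|=1$ blocks forcing $(A^TA)_{i,i+1}=0$ since some diagonal entry of $B^TB$ is nonzero) is exactly the omitted detail. Your caveat about $1\times 1$ matrices $B=[\pm 1]$ is a genuine observation the paper glosses over: there $A\otimes B=\pm A$ is weak Hadamard regardless of orthogonality of the columns of $A$, so the ``only if'' direction implicitly assumes $B$ has order at least $2$.
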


A corollary of the above proposition is that a Hadamard matrix tensored with a weak Hadamard matrix is   a weak Hadamard matrix; this result is implicit in \cite[Proposition 3.3]{adm2021}.
  
\item  Take any orthogonal vectors $\mathbf{a}, \mathbf{b}, \mathbf{c}, \dots$ with components in $\{-1,0,1\}$. Construct the matrix having two copies of each vector side by side as columns: $P=\begin{bmatrix} \mathbf{a},\mathbf{a},\mathbf{b},\mathbf{b},\mathbf{c},\mathbf{c},\dots\end{bmatrix}$. Such a matrix is a weak Hadamard matrix with $PP^T$ having non-zero entries only on the diagonal and super-diagonal. 

More generally, for any $n>1$, partition $\mathbb{R}^n$ into $k$ orthogonal subspaces $S_1, S_2, \dots, S_k$ where $\sum_{i=1}^k\text{dim}(S_i)=n$. From each partition select any two independent vectors $\mathbf{a}_1, \mathbf{a}_2 \in S_1$,\,  $\mathbf{b}_1, \mathbf{b}_2 \in S_2$ etc., while ensuring the vectors have components in $\{-1,0,1\}$. Then the matrix $\begin{bmatrix} \mathbf{a}_1, \mathbf{a}_2, \mathbf{b}_1,\mathbf{b}_2, \dots\end{bmatrix}$ is a weak Hadamard matrix (and invertible).

    \item Another method for constructing weak Hadamard matrices is a block design using the Williamson construction: A matrix 
            \begin{align}
                P = \begin{bmatrix}
                A&B&C&D\\
                -B&A&-D&C\\
                -C&D&A&-B\\
                -D&-C&B&A\\
                \end{bmatrix}
            \end{align}
            is a weak Hadamard matrix provided  $X^T Y =Y^T X$ for $X,Y\in\{A,B,C,D\}$ and $A^TA + B^TB + C^TC+D^TD$ is tridiagonal, with $A,B,C,D$ having entries in $\{-1,0,1\}$.

\item Similar to the Williamson construction, let $G$ and $H$ be matrices with entries in $\{-1, 0,1\}$. If $H^T H + G^T G$ is tridiagonal and $H^T G - G^T H = 0$, then $\begin{bmatrix}
        H& G\\
        G & -H
    \end{bmatrix}$
    is a weak Hadamard matrix. That being said, finding suitable matrices that satisfy the commutation relation may be troublesome. A similar construction that has easier to satisfy criteria is as follows: 
    Let $G$ and $H$ be weak Hadamard matrices of order $n$ such that $\mathbf{x}$ is the first column of both matrices, and, moreover, that $\mathbf{x}$ is orthogonal to every other column in $G$ and $H$. Then the block matrix
   $P=  
    \begin{bmatrix}
        H& X\\
        X&-G
   \end{bmatrix}$,
   with $X$ the matrix with first column $\mathbf{x}$ and all others $\mathbf{0}$, is a weak Hadamard matrix. Furthermore, if $G$ and $H$ have pairwise orthogonal columns, then so too does $P$. 

It is immediately apparent that by setting $\mathbf{x}=\mathbf{1}$ we may construct normalized weak Hadamard matrices.

\item Paley's method for constructing Hadamard matrices uses finite fields of order $q$, where $q$ is a power of an odd prime.  Let $\mathbb{Z}_q$ be the integers mod $q$.  Define
    \begin{align}
        \chi(a) = \begin{cases}
            0 & \text{if}\; a \equiv 0\\
            1 & \text{if}\; a \equiv b^2\; \text{for some non-zero} \;b \in\mathbb{Z}_q\\
            -1 & \text{otherwise}
        \end{cases}
    \end{align}
   Construct the circulant matrix $C$ by setting the $(i,j)$-th entry to be $\chi(i-j)$.
      Then the matrix $H= \begin{bmatrix}
            0&\mathbf{1}^T\\
            \mathbf{1}& C^T\\
        \end{bmatrix}$ 
    is a weak Hadamard matrix with pairwise orthogonal columns. 

\item Another construction for weak Hadamard matrices is as follows: For $n, k\in \mathbb{N}$ with $k\leq n$, define the sets $X_{2^k}$ of $2^n$ dimensional vectors whereby the set $X_{2^k}$ consists of $2^{n-k}$ vectors $\{\mathbf{x}_j\}_{j\in \mathbb{Z}_{2^{n-k}}}$, where the $i$-th component of the vector $\mathbf{x}_j$ is given by:
\begin{align}
    \mathbf{x}_j(i)=\begin{cases}
    1 & i\in [2^{k} j + 1,\dots,2^{k} j + 2^{k-1}]\\
    -1 & i\in [2^{k} j + 2^{k-1} + 1,\dots,2^{k}j + 2^{k}]\\
    0 & \text{otherwise}
    \end{cases}
\end{align}
Note that each vector $\mathbf{x}_j$ will have $2^k$ non-zero consequent entries, with an equal number of $1$'s and $-1$'s. It can then be trivially seen that each set $X_{2^k}$ consists of $2^{n-k}$ mutually orthogonal vectors, all of which are orthogonal to $\mathbf{1}$. 

It is not difficult to see that the  set $X=\bigcup_{1\leq k \leq n}X_{2^{k}}$ combined with $\mathbf{1}$ gives a collection of $2^n$ mutually orthogonal vectors. 
This collection of vectors yields the matrix of eigenvectors of $K_{2^n}\backslash e$. It follows that there exists a weak Hadamard matrix $H$ of order $2^n$ (for any $n\in \mathbb N$), such that $H$ has pairwise orthogonal columns and contains $\mathbf{1}$ as a column, but where $H$ is not a Hadamard matrix.
\end{enumerate}

\section{Weakly Hadamard diagonalizable graphs}\label{sec:whd}

We first recall some definitions from graph theory. For a simple weighted graph $X$, the adjacency matrix $A=A(X)\in \mathcal{M}_n$ is a matrix such that the $(i,j)$-th entry $A_{ij}$ corresponds to the edge weight between vertices $i$ and $j$ (with $A_{ij}=0$ if no such edge exists). If the edge weights between every vertex in the graph are all equal to one, then $X$ is said to be an unweighted graph. We will consider only undirected graphs herein, so that $A_{ij}=A_{ji}$ for all $i,j=1, \dots, n$. The degree matrix $D(X)\in \mathcal{M}_n$ is the diagonal matrix whose components $d_{ii}=\operatorname{deg}(i)$, referred to as the degree of the vertex $i$, correspond to the sum of all weights incident on the the vertex $i$. If the degree of every vertex is equal, then $X$ is said to be regular. The Laplacian $L(X)$ of a  weighted graph $X$ is defined as $L(X)=D(X)-A(X)$. 

    A graph $X$ is Hadamard diagonalizable if its Laplacian is diagonalizable by a Hadamard matrix. Similarly, a graph $X$ is weakly Hadamard  diagonalizable (WHD) if its Laplacian is diagonalizable by a weak Hadamard matrix. Note that if a graph is Hadamard diagonalizable or WHD, then it is diagonalizable by a normalized Hadamard or normalized weak Hadamard, respectively. For a WHD graph $X$, we denote by $P_X$ the normalized weak Hadamard diagonalizing the Laplacian of $X$.
    
    In this section, we consider the combinatorial and spectral properties of weakly Hadamard diagonalizable graphs, and provide constructions and examples of graphs that are weakly Hadamard diagonalizable.

\subsection{Eigenvalues and eigenvectors of WHD graphs}

We say that $X$ is \textit{Laplacian integral} if the Laplacian eigenvalues of $X$ are all integers. The following is an extension of \cite[Lemma 2.2]{adm2021} to weighted WHD graphs. We include the proof for completeness.

\begin{theorem}
\label{lapint}
If $X$ is an integer-weighted WHD graph, then $X$ is Laplacian integral.
\end{theorem}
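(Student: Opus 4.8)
The plan is to exploit the fact that $X$ is diagonalized by a normalized weak Hadamard matrix $P_X$ whose entries lie in $\{-1,0,1\}$, together with the hypothesis that $L(X)$ has integer entries. First I would write $L(X) = P_X D P_X^{-1}$, where $D = \operatorname{diag}(\lambda_1,\dots,\lambda_n)$ is the diagonal matrix of Laplacian eigenvalues and $\lambda_1 = 0$ with eigenvector $\textbf{1}$ (the all-ones vector is always a Laplacian eigenvector, and since $P_X$ is normalized it appears as the first column). The key point is that $P_X$ is a fixed integer matrix, so its inverse $P_X^{-1}$ has rational entries (each entry is a ratio of integer minors over $\det P_X \in \mathbb{Z}$). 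Hence every $\lambda_i$ can be recovered as a rational linear combination of the integer entries of $L(X)$: explicitly, $\lambda_i$ is an entry of $P_X^{-1} L(X) P_X$, which is a rational number. So each Laplacian eigenvalue is rational.

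Next I would upgrade rational to integer. Each $\lambda_i$ is a root of the characteristic polynomial $\det(\lambda I - L(X))$, which is a monic polynomial with integer coefficients because $L(X)$ is an integer matrix. A rational root of a monic integer polynomial is an algebraic integer, hence an ordinary integer (by the rational root theorem). Therefore every $\lambda_i \in \mathbb{Z}$, which is exactly the assertion that $X$ is Laplacian integral.

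I do not expect any serious obstacle here: the argument is essentially the observation that conjugation by a fixed integer matrix sends integer matrices to rational matrices, combined with the rational-root theorem. The only point requiring a touch of care is the case where $P_X$ is not invertible (which Proposition \ref{inv} shows can happen for weak Hadamard matrices). In that situation one cannot literally write $P_X^{-1}$, so instead I would argue directly with the eigenvalue equation $L(X)\,\textbf{v}_i = \lambda_i \textbf{v}_i$ for a $\{-1,0,1\}$ eigenvector $\textbf{v}_i$: picking any coordinate $j$ with $\textbf{v}_i(j)\neq 0$ gives $\lambda_i = (L(X)\textbf{v}_i)(j)/\textbf{v}_i(j) = \pm (L(X)\textbf{v}_i)(j)$, an integer outright, since $L(X)\textbf{v}_i$ has integer entries. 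This second formulation in fact sidesteps invertibility entirely and proves the theorem in all cases, so I would likely present it as the main line of argument and mention the characteristic-polynomial phrasing only as an alternative.
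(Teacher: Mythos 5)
Your proposal is correct, and the direct argument you end with (evaluate $L(X)\mathbf{v}_i=\lambda_i\mathbf{v}_i$ at a coordinate where the $\{-1,0,1\}$-eigenvector is nonzero, hence $\pm 1$, so $\lambda_i$ is an integer entry of $L(X)\mathbf{v}_i$ up to sign) is exactly the paper's proof. The preliminary detour through $P_X^{-1}$ and the rational root theorem is unnecessary, as you yourself note.
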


\begin{proof}
Let $S=[\mathbf{1},\mathbf{x}_2,\ldots,\mathbf{x}_n]$ be the weak Hadamard diagonalizing $L(X)$. Since $L(X)$ has integer entries and each $\mathbf{x}_j$ has entries from $\{0,-1,1\}$, the number $\lambda_j$ satisfying $L(x)\mathbf{x}_j=\lambda_j\mathbf{x}_j$ must be an integer.
\end{proof}

For our next result, we denote the edge weight between vertices $u$ and $v$ by $A_{uv}$. Recall that vertices $u$ and $v$ in $X$ are \textit{twins} if they have the same neighbours. Twins may or may not be adjacent.

\begin{theorem}
\label{nk}
Let $X$ be a weighted WHD graph on $n$ vertices. If $1\leq k\leq \frac{n}{2}$ and $\mathbf{x}$ is an eigenvector of $L(X)$ with corresponding eigenvalue $\lambda$ such that $k$ of its entries are equal to 1, then the following hold.
\begin{enumerate}
\item If $k=1$, then $\mathbf{x}=\mathbf{e}_u-\mathbf{e}_v$ for some pair of distinct vertices $u$ and $v$ in $X$ and $\lambda=\operatorname{deg}(u)+A_{uv}$. In this case, $u$ and $v$ are twin vertices in $X$.
\item If $k\geq 2$, then $\displaystyle\mathbf{x}=\sum_{u\in U}\mathbf{e}_u-\sum_{v\in W}\mathbf{e}_v$ for some disjoint nonempty subsets $U$ and $W$ of $V(X)$ each of size $k$ such that for any two vertices $u\in U$ and $v\in W$, we have
\begin{equation}
\label{tw}
\sum_{w\notin U\cup W}A_{uw}=\sum_{w\notin U\cup W}A_{vw},
\end{equation}
and for a fixed $u\in U$, we have
\begin{equation*}
\lambda=2\sum_{v\in W}A_{uv}+\sum_{w\notin U\cup W}A_{uw}.    
\end{equation*}
In particular, $\lambda$ is even if and only if one side of (\ref{tw}) is even. Further, if $X$ is unweighted, then $\lambda$ is even if and only if each $u\in U$ has an even number of neighbours in $V(X)\backslash (U\cup W)$.
\end{enumerate}
\end{theorem}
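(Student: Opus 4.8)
The plan is to exploit the eigenvalue equation $L(X)\textbf{x} = \lambda\textbf{x}$ coordinate-by-coordinate, using the known structure of $\textbf{x}$ imposed by the weak Hadamard property. First I would invoke Theorem~\ref{WHDcombi}: since $\textbf{x}$ is a column of a normalized weak Hadamard matrix (other than $\textbf{1}$), it is orthogonal to $\textbf{1}$, so it has an even number of nonzero entries, exactly half equal to $1$. Thus if $k$ entries equal $1$, then $k$ entries equal $-1$, and we may write $\textbf{x} = \sum_{u\in U}\textbf{e}_u - \sum_{v\in W}\textbf{e}_v$ for disjoint sets $U,W$ with $|U|=|W|=k$; when $k=1$ this is exactly $\textbf{x}=\textbf{e}_u-\textbf{e}_v$, giving the stated form in part~(1).

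Next I would write out the $u$-th coordinate of $L(X)\textbf{x} = \lambda\textbf{x}$ for a fixed $u\in U$. Using $L(X)=D(X)-A(X)$, the left side is $\operatorname{deg}(u)\cdot 1 - \sum_{w}\omega[u,w]\textbf{x}(w)$. Splitting the sum over $w$ according to whether $w\in U$, $w\in W$, or $w\notin U\cup W$, and recalling $\textbf{x}(w)=1,-1,0$ respectively, the left side becomes $\operatorname{deg}(u) - \sum_{w\in U}\omega[u,w] + \sum_{w\in W}\omega[u,w]$. Since $\operatorname{deg}(u)=\sum_{w}\omega[u,w] = \sum_{w\in U}\omega[u,w] + \sum_{w\in W}\omega[u,w] + \sum_{w\notin U\cup W}\omega[u,w]$ (noting $\omega[u,u]=0$), this simplifies to $2\sum_{w\in W}\omega[u,w] + \sum_{w\notin U\cup W}\omega[u,w]$, which must equal $\lambda$. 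This gives the formula for $\lambda$ in part~(2), and the same computation with $k=1$, $U=\{u\}$, $W=\{v\}$ yields $\lambda = 2\omega[u,v] + \sum_{w\neq u,v}\omega[u,w] = \omega[u,v] + \operatorname{deg}(u)$, proving the eigenvalue claim in part~(1). Running the identical argument at a coordinate $v\in W$ gives $\lambda = 2\sum_{w\in U}\omega[v,w] + \sum_{w\notin U\cup W}\omega[v,w]$; subtracting the two expressions for $\lambda$ and using symmetry $\sum_{w\in W}\omega[u,w]$ summed over $u\in U$ equals $\sum_{w\in U}\omega[v,w]$ summed over $v\in W$ — more precisely, comparing the expression at an arbitrary $u\in U$ with that at an arbitrary $v\in W$ — forces $\sum_{w\notin U\cup W}\omega[u,w] = \sum_{w\notin U\cup W}\omega[v,w]$, which is exactly~(\ref{tw}).

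For the parity statements: from $\lambda = 2\sum_{v\in W}\omega[u,v] + \sum_{w\notin U\cup W}\omega[u,w]$, the first term is even, so $\lambda$ has the same parity as $\sum_{w\notin U\cup W}\omega[u,w]$, i.e. $\lambda$ is even iff that (hence by~(\ref{tw}), either) side of~(\ref{tw}) is even. In the unweighted case $\sum_{w\notin U\cup W}\omega[u,w]$ counts the neighbours of $u$ in $V(X)\setminus(U\cup W)$, so $\lambda$ is even iff each $u\in U$ has an even number of such neighbours.

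The main obstacle is the careful bookkeeping in deriving~(\ref{tw}): one must be precise that the two expressions for $\lambda$ — one obtained at a vertex of $U$, one at a vertex of $W$ — hold for \emph{every} choice of $u\in U$ and $v\in W$, and that equating them for a fixed arbitrary pair yields~(\ref{tw}) together with the consistency condition that $2\sum_{v\in W}\omega[u,v]$ (summed appropriately) matches $2\sum_{w\in U}\omega[v,w]$; writing out why these $2k$ scalar equations are mutually consistent and collapse to the single condition~(\ref{tw}) plus one value of $\lambda$ requires attention, though no deep idea. Everything else is a direct unwinding of $L(X)=D(X)-A(X)$ against the explicit $\{-1,0,1\}$ pattern of $\textbf{x}$.
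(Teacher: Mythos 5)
Your overall strategy is the one the paper uses: write $\textbf{x}=\sum_{u\in U}\textbf{e}_u-\sum_{v\in W}\textbf{e}_v$ via Theorem \ref{WHDcombi} and read off the coordinates of $L(X)\textbf{x}=\lambda\textbf{x}$ (the paper does this in block form, you do it entrywise). Your derivation of the eigenvalue formula at a vertex of $U$, of part (1), and of the parity statements is correct and matches the paper. (One small inaccuracy: a non-$\textbf{1}$ column of a normalized weak Hadamard matrix need not be orthogonal to $\textbf{1}$ in general, since only non-consecutive columns are quasi-orthogonal; orthogonality of $\textbf{x}$ to $\textbf{1}$ should instead come from the symmetry of $L(X)$ when $\lambda\neq 0$, with $\lambda=0$ handled separately.)

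The genuine gap is in your derivation of (\ref{tw}). For a fixed pair $u\in U$, $v\in W$, your two coordinate equations give $2\sum_{w\in W}\omega[u,w]+\sum_{w\notin U\cup W}\omega[u,w]=\lambda=2\sum_{w\in U}\omega[v,w]+\sum_{w\notin U\cup W}\omega[v,w]$, so (\ref{tw}) for that pair is equivalent to the pointwise identity $\sum_{w\in W}\omega[u,w]=\sum_{w\in U}\omega[v,w]$, which you never prove. The ``symmetry'' you invoke is only the aggregated identity $\sum_{u\in U}\sum_{w\in W}\omega[u,w]=\sum_{v\in W}\sum_{w\in U}\omega[v,w]$, which is trivially true but does not yield the pointwise statement; the $2k$ scalar equations do not by themselves collapse to (\ref{tw}). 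Indeed, the eigenvector equation alone cannot force (\ref{tw}): on six vertices with $U=\{1,2\}$, $W=\{3,4\}$ and weights $\omega[2,3]=\omega[2,4]=1$, $\omega[1,5]=4$, $\omega[1,6]=\omega[2,6]=\omega[3,5]=\omega[3,6]=\omega[4,5]=\omega[4,6]=2$, all others $0$, the vector $(1,1,-1,-1,0,0)^T$ is a Laplacian eigenvector with $\lambda=6$, yet $\sum_{w\in\{5,6\}}\omega[1,w]=6\neq 4=\sum_{w\in\{5,6\}}\omega[3,w]$. (This graph is not claimed to be WHD; the point is that any complete proof of (\ref{tw}) must use the WHD hypothesis beyond the single eigenvector equation, or settle for the mod-2 version, which your computation does establish and which is all the parity conclusions require.) For what it is worth, the paper's own proof is terse at exactly this spot: it writes the $(2,1)$ block as $-P_1$ rather than $-P_1^T$ to conclude $P_2\textbf{1}=P_3\textbf{1}$, and even that identity only compares the $i$th vertex of $U$ with the $i$th vertex of $W$ rather than an arbitrary pair as (\ref{tw}) asserts. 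But judged on its own, your argument for (\ref{tw}) does not go through as written.
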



\begin{proof}
By assumption, we may relabel the vertices of $X$ so that $\mathbf{x}=[\mathbf{1},-\mathbf{1},\mathbf{0}]^T$. Since $\mathbf{x}$ is orthogonal to $\mathbf{1}$, Theorem \ref{WHDcombi}(1) implies that $k$ entries of
$\mathbf{x}$ are equal to $-1$. Note that (1) is immediate from \cite[Lemma 2.9]{MonterdeELA}. Now, assume $k\geq 2$. Let $U$ and $W$ resp.\ be the set of vertices corresponding to entries of $\mathbf{x}$ equal to 1 and $-1$. If $X_1$, $X_2$ and $X_3$ are the subgraphs of $X$ formed by $U$, $W$ and $V(X)\backslash (U\cup W)$ resp., then
\begin{equation}
\label{lapeq}
\left[ \begin{array}{ccccc}L(X_1)+R_1&-P_1&-P_2 \\ -P_1&L(X_2)+R_2&-P_3 \\ -P_2^T&-P_3^T&L(X_3)+R_3\end{array}\right]\left[\begin{array}{ccccc}\mathbf{1} \\ -\mathbf{1} \\ \mathbf{0} \end{array} \right]=\lambda\left[\begin{array}{ccccc}\mathbf{1} \\ -\mathbf{1}\\ \mathbf{0} \end{array} \right].
\end{equation}
where $R_j$ is diagonal for $j=1,2,3$, $R_1\mathbf{1}=(P_1+P_2)\mathbf{1}$, $R_2\mathbf{1}=(P_1+P_3)\mathbf{1}$ and $R_3\mathbf{1}=(P_2^T+P_3^T)\mathbf{1}$.
$P_2\mathbf{1}=P_3\mathbf{1}$. This proves (\ref{tw}). 
Since $X$ is integer-weighted, $\lambda$ is even if and only if all entries of $P_2\mathbf{1}$ are even. In particular, if $n=2k$, then $X_3$ is an empty graph. In this case, the matrices $P_2,P_2^T,P_3,P_3^T$ and $L(X_3)+R_3$ are absent in $L(X)$, and so we may regard any vertex of $X_1$ as having 0 neighbours in $X_3$. The same argument then yields $\lambda \mathbf{1}=2P_1\mathbf{1}$, from which it follows that $\lambda$ is even. This proves (2).
\end{proof}

If $\mathbf{x}$ in Theorem \ref{nk}(2) has no zero entries, i.e., $k=\frac{n}{2}$, then $\lambda=2\sum_{v\in W}A_{uv}$ for any $u\in U$, and hence $\lambda$ is even. In particular, if $X$ is an integer weighted Hadamard diagonalizable graph, then $k=\frac{n}{2}$ for any column of $P_X$, and so Theorem \ref{nk} implies that each eigenvalue of $L(X)$ is even. This generalizes the fact that Hadamard diagonalizable graphs have even integers \cite[Theorem 5]{barik2011}.

Determining if a graph is WHD involves careful consideration of the eigenvectors of the Laplacian. In particular, if the eigenspace of one eigenvalue is large, one needs to choose the ``correct'' eigenvectors, if they exist, to produce the weak Hadamard matrix that does the job. Theorems~\ref{lapint} and \ref{nk} provide simple-to-check necessary conditions to be WHD. This allows us to rule out graphs that are not WHD.

We end this subsection with a conjecture based on Corollaries~\ref{lem:OddZero} and~\ref{cor:5}:

\begin{conjecture}\label{conj:odd}
    For any odd dimension, there is no WHD graph whose Laplacian eigenspaces have equal algebraic and geometric multiplicities (that is, there is no weak Hadamard matrix with pairwise orthogonal columns diagonalizing the Laplacian matrix of the graph). 
\end{conjecture}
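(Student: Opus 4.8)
The plan is to prove the stronger, graph-free statement that \emph{no normalized weak Hadamard matrix of odd order $n\ge 3$ has pairwise orthogonal columns}. Combined with the fact, noted above, that every weakly Hadamard diagonalizable graph is diagonalized by a normalized weak Hadamard matrix — and with the parenthetical reformulation in the conjecture — this would settle Conjecture~\ref{conj:odd}. So suppose, toward a contradiction, that $P=[\textbf{1},\textbf{x}_2,\dots,\textbf{x}_n]$ is such a matrix with $n$ odd. By Theorem~\ref{WHDcombi}(1) each $\textbf{x}_j$ ($j\ge 2$) has an even number $n_j:=\|\textbf{x}_j\|^2$ of nonzero entries; by Lemma~\ref{lem:OddZero} it has at least three zeros, so $n_j$ is even with $2\le n_j\le n-3$. (In particular $n=3$ is immediately impossible, and $n=5$ recovers Corollary~\ref{cor:5}.)

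The next step supplies the arithmetic leverage. Since the columns of $P$ are pairwise orthogonal, $P^TP=D:=\operatorname{diag}(n,n_2,\dots,n_n)$ is diagonal, so $P^{-1}=D^{-1}P^T$ by Proposition~\ref{inv} and hence $PD^{-1}P^T=I$. Reading off the $(i,i)$-entry for each row index $i$ gives
\[
\frac1n+\sum_{\substack{2\le j\le n\\ \textbf{x}_j(i)\ne 0}}\frac{1}{n_j}=1,
\qquad\text{so}\qquad
\sum_{\substack{2\le j\le n\\ \textbf{x}_j(i)\ne 0}}\frac{1}{n_j}=\frac{n-1}{n}.
\]
Let $N_0:=\operatorname{lcm}(2,4,6,\dots,n-3)$. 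Each $n_j$ divides $N_0$, so multiplying through by $N_0$ shows $N_0(n-1)/n\in\mathbb Z$; as $\gcd(n,n-1)=1$, this forces $n\mid N_0$. When $n=p^{a}$ is a prime power this is false, since every integer in $\{2,\dots,n-3\}$ has $p$-adic valuation at most $a-1<a$, so $v_p(N_0)<a=v_p(n)$; contradiction. This already proves the conjecture for every odd prime power $n$, extending Corollary~\ref{cor:5} to $n=7,9,11,13,25,27,\dots$.

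The obstacle — and the reason the full statement remains a conjecture — is that $n\mid N_0$ \emph{can} hold, namely whenever $n$ has two or more distinct prime factors (e.g.\ $n=15,21,33,35,\dots$), because then each prime-power part of $n$ lies in $\{2,\dots,n-3\}$ and hence divides $N_0$. For such $n$ the per-row identity is necessary but not contradictory by itself, and one must inject genuinely combinatorial information. The routes I would pursue, in order, are: (a) impose the row identity for \emph{all} $n$ rows at once, together with the off-diagonal relations $\sum_{j\ge 2}\textbf{x}_j(i)\textbf{x}_j(i')/n_j=-1/n$ ($i\ne i'$), turning the problem into a finite feasibility check that I expect to be already infeasible for small composite $n$ — the cases $n=15$ and $n=21$ are the natural first experiments; (b) exploit that the columns sharing a given value $n_j$ form a system of pairwise orthogonal $\{-1,0,1\}$ vectors each with exactly $n_j$ nonzeros, i.e.\ a weighing-matrix-type configuration, which limits their number and the possible positions of their supports; (c) if restricting to graph Laplacians (which is what the conjecture literally concerns), invoke Theorem~\ref{nk}, under which each non-$\textbf{1}$ column corresponds to disjoint ``twin-like'' vertex sets $U,W$ with $|U|=|W|=n_j/2$ and equal outside-degree sums, and attempt to force a contradiction via a double count of edges or of degrees. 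The hard part is precisely this composite case: the clean $2$-adic/determinant obstruction disappears, and one needs a combinatorial argument ruling out the simultaneous realizability of $n$ quasi-orthogonal $\{-1,0,1\}$ columns under the per-row unit-fraction constraint.
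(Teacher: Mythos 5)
This statement is Conjecture~\ref{conj:odd}: the paper offers no proof of it, only the supporting evidence of Lemma~\ref{lem:OddZero} and Corollary~\ref{cor:5}, so there is nothing to compare your argument against and it must be judged on its own. As you yourself acknowledge, your proposal does not settle the statement: for $n$ with at least two distinct odd prime factors ($n=15,21,\dots$) your valuation obstruction vanishes ($15\mid\operatorname{lcm}(2,4,\dots,12)=120$), and items (a)--(c) are a research programme, not an argument, so the composite case is a genuine gap — indeed it is exactly the open part of the conjecture. That said, the portion you do prove is correct and goes strictly beyond the paper. The row identity $\tfrac1n+\sum_{j\ge2,\,\mathbf{x}_j(i)\ne0}1/n_j=1$, read off from $PD^{-1}P^T=I$ via Proposition~\ref{inv}, together with Theorem~\ref{WHDcombi}(1) and Lemma~\ref{lem:OddZero} forcing each $n_j$ to be even with $2\le n_j\le n-3$, and the observation that no integer below $p^a$ has $p$-adic valuation $a$, correctly rules out every odd prime power $n\ge3$. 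This extends Corollary~\ref{cor:5} to $n=7,9,11,13,25,\dots$ and also strengthens the cited partial answers (Bogdanov's odd-prime case) to the open problem \cite{over} recorded in the Conclusion; it would be worth recording as a proposition in its own right.

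One caveat about your opening reduction. You pass from the conjecture to the graph-free claim ``no normalized weak Hadamard of odd order $n\ge3$ has pairwise orthogonal columns'' by invoking the paper's remark that a WHD graph is diagonalized by a normalized weak Hadamard. That step is safe for connected graphs: the kernel of $L(X)$ is spanned by $\mathbf{1}$, so the only admissible $\{-1,0,1\}$ kernel column is $\pm\mathbf{1}$ and normalization preserves pairwise orthogonality. For disconnected graphs it fails: the kernel has dimension at least two and its $\{-1,0,1\}$ basis inside $P$ need not contain $\mathbf{1}$ (Remark~\ref{D}); in fact $K_2\sqcup O_1$ with the matrix $D$ of Remark~\ref{D} is a graph of odd order $3$ whose Laplacian is diagonalized by a weak Hadamard with pairwise orthogonal columns. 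So the conjecture as literally stated must be read with a connectedness (equivalently, normalization) hypothesis, and your reduction should say so explicitly; your per-row unit-fraction identity genuinely uses the presence of the column $\mathbf{1}$ and does not survive without it.
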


\subsection{Unions and complements}

The \textit{union} of graphs $X_1,\ldots,X_k$, denoted by $\bigsqcup_{j=1}^k X_j$, is the graph whose vertex set is $\bigsqcup_{j=1}^k V(X_j)$ and edge set $\bigsqcup_{j=1}^k E(X_j)$. We also denote the matrix $M$ with its $j$-th column deleted by $M[j]$.

It is already known that the union of two WHD graphs yields a WHD graph \cite[Lemma 2.3]{adm2021}. We add to this result by determining the weak Hadamard that diagonalizes a union of WHD graphs.


\begin{proposition}
\label{union}
Let $X_1,\ldots X_k$ be weighted graphs on $n_1,\ldots,n_k$ vertices, where each $L(X_j)$ diagonalized by the matrix $S_j$ whose first column is $\mathbf{1}$ and all other columns of $S_j$ are orthogonal to $\mathbf{1}$. Then $X=\bigsqcup_{j=1}^k X_j$ is diagonalized by the matrix
\begin{equation}
\label{union1}
[\ \mathbf{1}\ \mathbf{v}_1\ \ldots\ \mathbf{v}_{k-1}\ |\ \bigoplus_{j=1}^k S_j[1]\ ]
\end{equation}
where each $\mathbf{v}_j$ is a vector of order $\sum_{j=1}^kn_j$ given by
\begin{equation}
\label{vj}
\mathbf{v}_j=\mathbf{e}_{j}\otimes  \mathbf{1}_{n_j}-\mathbf{e}_{j+1}\otimes \mathbf{1}_{n_{j+1}}.
\end{equation}
In particular, if each $X_j$ is weighted WHD, and either $k=2$ or $n_1=n_2=\cdots=n_k$, then $X$ is also WHD.
\end{proposition}

\begin{proof}
Since $S_j$ has $\mathbf{1}$ as its first column and all other columns are orthogonal to $\mathbf{1}$, the vectors $\mathbf{e}_j\otimes\mathbf{1}_{n_k}$ for each $j\in\{1,\ldots,k\}$ are eigenvectors associated with the eigenvalue 0 of $L\left(X\right)$. Observe that $B=\{\mathbf{1},\mathbf{v}_1,\ldots,\mathbf{v}_{k-1}\}$ is a linearly independent set of eigenvectors associated with the eigenvalue 0 of $L\left(X\right)$, and so the matrix in (\ref{union1}) diagonalizes $X$. Moreover, each vector in $B$ has entries from the set $\{0,-1,1\}$ and is orthogonal to every column of $\bigoplus_{j=1}^k S_j[1]$. Thus, if each $X_j$ is weighted WHD, and either $k=2$ or $n_1=\ldots=n_k$, then the non-consecutive elements in $B$ are orthogonal, and so $X$ is WHD.
\end{proof}

\begin{remark}
\label{D}
Suppose one of the $X_j$'s in Proposition \ref{union} is $K_2\sqcup O_1$. Then $D=\begin{bmatrix} 1&0&1 \\ 1&0&-1 \\ 0&1&0 \end{bmatrix}$ diagonalizes $L(K_2\sqcup O_1)$. Note that the first two columns of $D$ are orthogonal eigenvectors associated with the eigenvalue 0. Thus, if $\mathbf{v}$ is a vector such that $\{\mathbf{1},\mathbf{v}\}$ spans the eigenspace of $L(K_2\sqcup O_1)$ associated with 0, then $\mathbf{v}$ cannot be orthogonal to $\mathbf{1}$. In this case, there is no matrix $S_j$ whose first column is $\mathbf{1}$ and all other columns of $S_j$ are orthogonal to $\mathbf{1}$. One also checks that the vector $\mathbf{v}_1$ in (\ref{vj}) is not orthogonal to $\mathbf{1}$, and so the columns of the matrix in (\ref{union1}) are not pairwise orthogonal. That is, the conclusion of Proposition \ref{union} does not necessarily hold whenever the premise is not true.
\end{remark}

If $k=2$, then the last statement of Proposition \ref{union} holds for Hadamard diagonalizable graphs whenever $X=Y$ \cite[Lemma 7]{barik2011}, but could fail whenever $X\neq Y$. Indeed, Breen et al.\ showed that the only disconnected graphs on $8k+4$ vertices that are Hadamard diagonalizable are $K_{2k+2}\sqcup K_{2k+2}$ and $O_{8k+4}$ \cite[Theorem 5]{breen}. Thus, if $X$ and $Y$ are Hadamard diagonalizable, and either $X\neq K_{2k+2}$ or $X$ is not an empty graph, then $X\sqcup Y$ is not Hadamard diagonalizable whenever $X\sqcup Y$ has $8k+4$ vertices.

\begin{corollary}
\label{power2}
With the assumption in Proposition \ref{union}, suppose each $X_j$ is weighted WHD and $n_1=\ldots=n_k$. If $k=2^{\ell}$ and each $S_j$ has pairwise orthogonal columns, then $X$ is WHD and $L(X)$ is diagonalized by 
\begin{equation*}
[\ Q\ |\ \bigoplus_{j=1}^k S_j[1]\ ]
\end{equation*}
with pairwise orthogonal columns, where $Q=\begin{bmatrix} 1&1\\ 1&-1\end{bmatrix} \otimes \mathbf{1}_{n_1}$ if $\ell=1$ and $P_{\ell-1}\otimes \mathbf{1}_{n_1}$ otherwise, where $P_\ell$ is the matrix in (\ref{Pell}). 

\end{corollary}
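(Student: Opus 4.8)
\textbf{Proof plan for Corollary \ref{power2}.}

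The plan is to apply Proposition \ref{union} and then upgrade the conclusion by tracking pairwise orthogonality, using the Sylvester-type construction of Example \ref{ex:K4-e}. First I would invoke Proposition \ref{union} directly: since each $X_j$ is weighted WHD with $n_1=\ldots=n_k$, the hypotheses are met, so $X=\bigsqcup_{j=1}^kX_j$ is WHD and $L(X)$ is diagonalized by the matrix $[\ \textbf{1}\ \textbf{v}_1\ \ldots\ \textbf{v}_{k-1}\ |\ \bigoplus_{j=1}^k S_j[1]\ ]$. The point of the corollary is that when $k=2^\ell$ and each $S_j$ has pairwise orthogonal columns, the eigenvalue-$0$ block $[\ \textbf{1}\ \textbf{v}_1\ \ldots\ \textbf{v}_{k-1}\ ]$ can be replaced by a matrix with pairwise orthogonal columns, namely $Q$, so that the whole diagonalizing matrix becomes weighting-matrix-like (all columns pairwise orthogonal).

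The key step is to identify the span of $\{\textbf{1},\textbf{v}_1,\ldots,\textbf{v}_{k-1}\}$ with the $0$-eigenspace of $L(X)$ and show $Q$ is another $\{-1,0,1\}$-basis of it with pairwise orthogonal columns. Since $n_1=\ldots=n_k=:n_1$, every vector of the form $\textbf{w}\otimes\textbf{1}_{n_1}$ with $\textbf{w}\in\mathbb{R}^k$ lies in $\ker L(X)$, because $\textbf{1}_{n_1}\in\ker L(X_j)$ for each $j$; conversely the span of $\{\textbf{e}_j\otimes\textbf{1}_{n_1}\}_{j=1}^k$ is exactly the $0$-eigenspace when each $X_j$ is connected, and in general it contains the subspace spanned by $\{\textbf{1},\textbf{v}_1,\ldots,\textbf{v}_{k-1}\}$, which is what matters. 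Thus it suffices to exhibit a $k\times k$ matrix $R$ with entries in $\{-1,0,1\}$, pairwise orthogonal columns, whose columns span $\mathbb{R}^k$ and whose first column is $\textbf{1}_k$; then $Q=R\otimes\textbf{1}_{n_1}$ has the required properties and, being orthogonal to every column of $\bigoplus_{j=1}^kS_j[1]$ (each of which sums to zero within each block) as well as having pairwise orthogonal columns among themselves, the full matrix $[\ Q\ |\ \bigoplus_{j=1}^kS_j[1]\ ]$ has pairwise orthogonal columns. For $\ell=1$ take $R=H_2=\begin{bmatrix}1&1\\1&-1\end{bmatrix}$; for $\ell\ge 2$ take $R=P_{\ell-1}$, which by Example \ref{ex:K4-e} is a $2^\ell\times 2^\ell$ normalized weak Hadamard matrix with pairwise orthogonal columns (in fact invertible, being a Sylvester-type product of the invertible $P_1$), hence its columns span $\mathbb{R}^{2^\ell}=\mathbb{R}^k$ and its first column is $\textbf{1}_k$.

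The main obstacle, modest as it is, will be verifying carefully that replacing $[\ \textbf{1}\ \textbf{v}_1\ \ldots\ \textbf{v}_{k-1}\ ]$ by $Q$ still yields a \emph{diagonalizing} matrix for $L(X)$ — i.e. that $Q\otimes\textbf{1}_{n_1}$ (resp. $P_{\ell-1}\otimes\textbf{1}_{n_1}$) has columns that are genuinely in the $0$-eigenspace and are linearly independent from each other and from the columns of $\bigoplus_{j=1}^kS_j[1]$ — together with confirming the orthogonality bookkeeping: columns of $Q$ are pairwise orthogonal because $R^TR$ is diagonal (as $R=H_2$ or $R=P_{\ell-1}$ has pairwise orthogonal columns) and $(R\otimes\textbf{1}_{n_1})^T(R\otimes\textbf{1}_{n_1})=R^TR\otimes n_1$; each column of $Q$ is orthogonal to each column of $\bigoplus_jS_j[1]$ because the latter are block-supported with each block-entry summing to zero while $Q$ is constant on blocks; and the off-diagonal blocks of the full Gram matrix vanish. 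Assembling these observations gives the stated diagonalizing matrix with pairwise orthogonal columns, and since it is a weak Hadamard matrix (a $\{-1,0,1\}$ matrix with diagonal, hence tridiagonal, Gram matrix), $X$ is WHD.
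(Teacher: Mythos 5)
Your proposal is correct and follows essentially the same route as the paper: you replace the kernel basis $\{\textbf{1},\textbf{v}_1,\ldots,\textbf{v}_{k-1}\}$ from Proposition \ref{union} by the columns of $H_2\otimes\textbf{1}_{n_1}$ (for $\ell=1$) or $P_{\ell-1}\otimes\textbf{1}_{n_1}$ (for $\ell\geq 2$), exactly as the paper does by choosing the $\textbf{v}_j$'s accordingly. Your extra bookkeeping (that these columns lie in $\ker L(X)$, are linearly independent, and are orthogonal to the block-supported columns of $\bigoplus_j S_j[1]$) simply spells out what the paper leaves implicit.
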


\begin{proof}
From the proof of Proposition \ref{union}, the vectors $\mathbf{e}_j\otimes \mathbf{1}_{n_k}$ for each $j\in\{1,\ldots,k\}$ are eigenvectors associated with the eigenvalue 0 of $L\left(X\right)$. If $\ell=1$, then we may choose $v_1$ such that the first two columns of the matrix in (\ref{union1}) is $\begin{bmatrix} 1&1\\ 1&-1\end{bmatrix} \otimes \mathbf{1}_{n_1}$. For $\ell\geq 2$, we may choose the $\mathbf{v}_j$'s such that first $k$ columns of the matrix in (\ref{union1}) is $P_{\ell-1}\otimes \mathbf{1}_{n_1}$, where $P_\ell$ is the matrix in (\ref{Pell}). In both cases, the matrix in (\ref{union1}) has pairwise orthogonal columns. Since $S_j$ has pairwise orthogonal columns, the result is immediate.
\end{proof}

It is known that the complement $X^c$ and the join $X\vee X$ are Hadamard diagonalizable whenever $X$ is \cite[Lemma 7]{barik2011}. For $X^c$, this is known to extend to WHD graphs under mild conditions \cite[Lemmas 2.4-2.5]{adm2021}.

\begin{proposition}
\label{comp}
Let $X$ be unweighted. If $L(X)$ is diagonalized by a matrix $S$  each of whose columns distinct from $\mathbf{1}$ is orthogonal to $\mathbf{1}$, then $X^c$ is also diagonalized by $S$. In particular, if $X$ is WHD, then so is $X^c$.
\end{proposition}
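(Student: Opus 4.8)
The plan is to exploit the classical fact that for an unweighted graph $X$ on $n$ vertices, the Laplacians of $X$ and its complement $X^c$ are related by $L(X)+L(X^c)=nI_n-J_n$, where $J_n=\textbf{1}\textbf{1}^T$ is the all-ones matrix. Thus $L(X^c)=nI_n-J_n-L(X)$, and it suffices to check that each column of $S$ is an eigenvector of each of the three summands $nI_n$, $J_n$, and $L(X)$. The first is immediate since $nI_n$ acts as a scalar on every vector, and the third holds by hypothesis. The crux is the middle term $J_n$.

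First I would invoke the hypothesis directly: write $S=[\textbf{1},\textbf{x}_2,\ldots,\textbf{x}_n]$, where $\textbf{1}^T\textbf{x}_j=0$ for all $j\geq 2$. Then $J_n\textbf{1}=n\textbf{1}$, so $\textbf{1}$ is an eigenvector of $J_n$ with eigenvalue $n$; and for $j\geq 2$, $J_n\textbf{x}_j=\textbf{1}(\textbf{1}^T\textbf{x}_j)=\textbf{0}$, so $\textbf{x}_j$ is an eigenvector of $J_n$ with eigenvalue $0$. Hence every column of $S$ is simultaneously an eigenvector of $nI_n$, $J_n$, and $L(X)$, and therefore an eigenvector of $L(X^c)=nI_n-J_n-L(X)$. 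Since $S$ is invertible (it diagonalizes $L(X)$, so its columns form a basis), $S^{-1}L(X^c)S$ is diagonal, i.e.\ $S$ diagonalizes $X^c$. The ``in particular'' clause is then immediate: if $X$ is WHD, we may take $S=P_X$, a weak Hadamard matrix whose columns other than $\textbf{1}$ are orthogonal to $\textbf{1}$ (a normalized weak Hadamard has this property), and the above shows $P_X$ also diagonalizes $L(X^c)$, so $X^c$ is WHD.

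I do not expect any serious obstacle here — the argument is a short linear-algebra computation — but the one point that genuinely uses the hypothesis (rather than being automatic) is the orthogonality of the non-trivial columns to $\textbf{1}$, which is exactly what makes those columns eigenvectors of $J_n$. Without that condition the columns of $S$ need not diagonalize $J_n$, so the proof would break down; this is presumably why the statement restricts to such $S$ and why the analogous claim in \cite{adm2021} needs ``mild conditions.'' I would also note in passing that the argument shows more: the eigenvalue of $L(X^c)$ on $\textbf{x}_j$ ($j\geq 2$) is $n-\lambda_j$ where $\lambda_j$ is the corresponding eigenvalue of $L(X)$, and the eigenvalue on $\textbf{1}$ remains $0$, consistent with $L(X^c)$ being a Laplacian. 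One should also remark that unweightedness is essential, since the identity $L(X)+L(X^c)=nI_n-J_n$ relies on $X$ being a simple unweighted graph.
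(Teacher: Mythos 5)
Your main computation is correct and is exactly the standard route for this statement: from $L(X)+L(X^c)=nI-J$ (valid because $X$ is unweighted) and the hypothesis that every column of $S$ other than $\textbf{1}$ is orthogonal to $\textbf{1}$ (hence annihilated by $J$), every column of $S$ is an eigenvector of $L(X^c)$, so $S$ diagonalizes $L(X^c)$; the paper states the proposition without an explicit proof, and this is the intended argument.

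However, your handling of the ``in particular'' clause has a genuine gap: you justify it by asserting that a normalized weak Hadamard matrix automatically has all columns other than $\textbf{1}$ orthogonal to $\textbf{1}$. Normalization only places $\textbf{1}$ as a column, and the weak Hadamard condition (tridiagonality of $P^TP$) forces orthogonality only between non-consecutive columns, so the column adjacent to $\textbf{1}$ need not be orthogonal to it; when $X$ is disconnected the $0$-eigenspace may not even admit a $\{-1,0,1\}$ eigenbasis of the required form. This matters: the paper's own example right after the proposition shows that $K_2\sqcup O_1$ is WHD (diagonalized by the weak Hadamard $D$ of Remark~\ref{D}, which violates the hypothesis on $S$), while its complement $P_3$ is \emph{not} WHD. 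So the unconditional implication ``$X$ WHD $\Rightarrow X^c$ WHD'' that your parenthetical would deliver is false; the ``in particular'' clause must be read under the standing hypothesis, i.e.\ $X$ is WHD via a weak Hadamard $S$ whose columns other than $\textbf{1}$ are orthogonal to $\textbf{1}$, in which case that same $S$ diagonalizes $L(X^c)$ and $X^c$ is WHD. (For connected $X$ the extra condition is automatic, since the remaining columns are eigenvectors for nonzero eigenvalues of the symmetric matrix $L(X)$ and hence orthogonal to $\textbf{1}$, which is why the paper separately notes that connectedness suffices.) With that correction, the rest of your argument, including the identification of the eigenvalues $0$ and $n-\lambda_j$ and the role of unweightedness, is fine.
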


The assumption about the matrix $S$ in Proposition \ref{comp} is indeed necessary. For instance, $X=K_2\sqcup O_1$ is diagonalized by the weak Hadamard matrix $D$ is Remark \ref{D}. But one checks that $L(P_3)$ has 3 as a simple eigenvalue with $[1,-2,1]^T$ as an associated eigenvector, and so $X^c=P_3$ is not WHD. Thus, the conclusion of Proposition \ref{comp} does not necessarily hold if the premise is not true.

If $X$ is connected, then all eigenvectors of $L(X)$ that are not in the eigenspace associated with the eigenvalue 0 are orthogonal to $\mathbf{1}$, and so  $X^c$ is WHD by Proposition \ref{comp}. On the other hand, if $X=\bigsqcup_{j=1}^kX_j$, where the $X_j$'s are WHD graphs on the same number of vertices and each $L(X_j)$ diagonalized by the matrix $S_j$ whose columns other than $\mathbf{1}$ are orthogonal to $\mathbf{1}$, then $X^c$ is also WHD. In particular, if $X$ is a disconnected regular graph whose components are WHD and have equal sizes, then $X^c$ is also WHD by the preceding statement \cite[Lemma 2.5]{adm2021}. But as the next example shows, there are disconnected non-regular WHD graphs whose components have equal size, where the complement also happens to be WHD.

\begin{example}
\label{uncomp}
Amongst all unweighted graphs on four vertices, four are non-isomorphic Hadamard diagonalizable, namely $K_4$, $C_4$, $K_2\sqcup K_2$ and $O_4$. Moreover, there are two  non-isomorphic WHD graphs on four vertices that are not Hadamard diagonalizable with $P_X$ having pairwise orthogonal columns, namely $K_4\backslash e$ and $O_2\sqcup K_2$. In fact, the Laplacian matrices of these six graphs are diagonalizable by the matrix $P_1$ in (\ref{Pone}), which is a weak Hadamard with pairwise orthogonal columns. Let $k=2^{\ell}$ and $X_{(k)}=\bigsqcup_{j=1}^kX_j$, where $X_j\in\{K_4,C_4,K_2\sqcup K_2,O_4,K_4\backslash e,O_2\sqcup K_2\}$. 
\begin{enumerate}
\item By Corollary \ref{power2}, $X_{(k)}$ is WHD and $P_{X_{(k)}}=[Q\ |\ \bigoplus_{j=1}^k P_1[1]\ ]$, where $Q=\begin{bmatrix} 1&1\\ 1&-1\end{bmatrix} \otimes \mathbf{1}_{n_1}$ if $\ell=1$ and $P_{\ell-1}\otimes \mathbf{1}_{n_1}$ otherwise, where $P_\ell$ is the matrix in (\ref{Pell}). 
\item Suppose at least two of the $X_j$'s are distinct so that $X_{(k)}$ is not regular. By  Proposition \ref{comp}, $X_{(k)}^c$ is WHD, and so $\mathcal{F}=\{X_{(k)}:k=2^{\ell},\ell\geq 1\}$ is an infinite family of disconnected non-regular WHD graphs, where the $X_j$'s have equal sizes and each $X_k^c$ is WHD. If we restrict each $X_j\in\{K_4,C_4,K_4\backslash e\}$, then each $X_{(k)}\in \mathcal{F}$ have components of equal sizes. Moreover, neither $X_{(k)}$ nor $X_{(k)}^c$ in this case are Hadamard diagonalizable, but $P_{X_{(k)}}=P_{X_{(k)}^c}$ has pairwise orthogonal columns.
\end{enumerate}
\end{example}

\subsection{Joins}

The \textit{join} of two weighted graphs $X$ and $Y$, denoted $X\vee Y$, is the graph obtained from $X$ and $Y$ by adding all edges between them of weight one \cite{kirkland2023quantum}. We show that under mild conditions, $X\vee X$ is WHD whenever $X$ is. It is a special case of \cite[Lemma 4.3]{adm2021}, however, the proof herein does not rely on recursively balanced partitions, and it explicitly shows the weak Hadamard matrix that diagonalizes $X\vee X$. 

\begin{theorem}\label{thm:join}
Let $X$ and $Y$ be weighted graphs, and $S$ be a matrix whose first column is $\mathbf{1}$ and all other columns are orthogonal to $\mathbf{1}$. If $L(X)$ and $L(Y)$ are diagonalized by $S$, then $L(X\sqcup Y)$ and $L(X\vee Y)$ are diagonalized by $\begin{bmatrix} S&S \\ S&-S\end{bmatrix}$ if and only if $X=Y$. In particular, if $\Lambda=S^{-1}L(X)S$, where $\Lambda=\text{diag}(0,\lambda_2,\ldots,\lambda_n)$, then
\begin{equation*}
\Lambda'=\frac{1}{2}\left[ \begin{array}{ccccc} S^{-1} &S^{-1} \\  S^{-1}&-S^{-1}\end{array}\right]L(X\vee X)\left[ \begin{array}{ccccc} S &S \\  S&-S\end{array}\right],
\end{equation*}
where $\Lambda'=\text{diag}(0,\lambda_2+n,\ldots,\lambda_n+n,2n,\lambda_2+n,\ldots,\lambda_n+n)$. Further, if $X$ is WHD, then so are $X\sqcup X$ and $X\vee X$.
\end{theorem}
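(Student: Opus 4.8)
The plan is to verify directly that the block matrix $\widetilde{S} = \begin{bmatrix} S & S \\ S & -S \end{bmatrix}$ diagonalizes both $L(X \sqcup Y)$ and $L(X \vee Y)$ precisely when $X = Y$, and then to check that $\widetilde{S}$ is a weak Hadamard matrix when $S$ is. First I would write $L(X \sqcup Y) = \begin{bmatrix} L(X) & 0 \\ 0 & L(Y) \end{bmatrix}$ and compute $\widetilde{S}^{-1} L(X \sqcup Y) \widetilde{S}$; since $\widetilde{S}^{-1} = \frac12 \begin{bmatrix} S^{-1} & S^{-1} \\ S^{-1} & -S^{-1} \end{bmatrix}$, this product is block-diagonalized to $\tfrac12 \begin{bmatrix} \Lambda_X + \Lambda_Y & \Lambda_X - \Lambda_Y \\ \Lambda_X - \Lambda_Y & \Lambda_X + \Lambda_Y \end{bmatrix}$ where $\Lambda_X = S^{-1}L(X)S$, $\Lambda_Y = S^{-1}L(Y)S$. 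This is diagonal if and only if $\Lambda_X = \Lambda_Y$, i.e. $L(X) = L(Y)$, i.e. $X = Y$; this establishes the ``if and only if'' for the disjoint union, and the join case is handled by the identity $L(X \vee Y) = L(X \sqcup Y) + nI - \begin{bmatrix} 0 & J \\ J^T & 0 \end{bmatrix}$ (for $|V(X)|=|V(Y)|=n$, with $J$ the all-ones block), noting that the extra terms are again simultaneously diagonalized by $\widetilde{S}$ using that the first column of $S$ is $\textbf{1}$ and the others are orthogonal to it.

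Next, for the spectral statement, I would specialize to $X = Y$: then $\Lambda_X = \Lambda_Y = \Lambda = \operatorname{diag}(0,\lambda_2,\dots,\lambda_n)$, so the disjoint-union conjugate becomes $\operatorname{diag}(\Lambda,\Lambda)$, and for the join one computes that the $nI$ term contributes $n$ to every coordinate except that the all-ones eigenvector contributes $0$, while the $\begin{bmatrix} 0 & J \\ J^T & 0\end{bmatrix}$ term, written in the $\widetilde{S}$ basis, kills everything except the coordinate corresponding to the eigenvector $[\textbf{1},-\textbf{1}]^T$, on which it acts as $-2n$ (before the $\tfrac12$ normalization); combining, the eigenvalues rearrange exactly as $\Lambda' = \operatorname{diag}(0,\lambda_2+n,\dots,\lambda_n+n,2n,\lambda_2+n,\dots,\lambda_n+n)$ as claimed. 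This is a routine bookkeeping computation once the action of each summand on the columns of $\widetilde{S}$ is pinned down.

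Finally, for the ``$X$ WHD $\Rightarrow X \sqcup X$ and $X \vee X$ WHD'' conclusion: if $X$ is WHD then we may take $S$ to be a normalized weak Hadamard matrix diagonalizing $L(X)$ whose columns other than $\textbf{1}$ are orthogonal to $\textbf{1}$, so the hypothesis of the theorem applies. It then suffices to show $\widetilde{S} = H_2 \otimes S$ is a weak Hadamard matrix, which is immediate from Proposition~\ref{prop:tpAB} (or its corollary) since $H_2$ has pairwise orthogonal columns and $S$ is a weak Hadamard matrix; moreover $\widetilde S$ is normalized since its first column is $\textbf{1}$. Thus $\widetilde S$ is a weak Hadamard matrix diagonalizing both $L(X \sqcup X)$ and $L(X \vee X)$, so both graphs are WHD.

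I expect the main obstacle to be the join computation: correctly expressing $L(X\vee Y)$ in terms of $L(X\sqcup Y)$ and the cross-adjacency block, and then carefully tracking how the rank-one correction $\begin{bmatrix} 0 & J \\ J^T & 0\end{bmatrix}$ transforms under conjugation by $\widetilde S$ — in particular verifying that it is diagonalized (not merely block-diagonalized) by $\widetilde S$, which hinges essentially on the normalization hypothesis that all columns of $S$ other than $\textbf{1}$ are $\textbf{1}$-orthogonal. The disjoint-union part and the final weak-Hadamard verification are comparatively mechanical.
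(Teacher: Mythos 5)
Your proposal is correct and takes essentially the same route as the paper's proof: conjugate the block Laplacian by $\begin{bmatrix} S & S\\ S & -S\end{bmatrix}$, note the off-diagonal blocks are (a multiple of) $\Lambda_X-\Lambda_Y$ so diagonality forces $X=Y$, and use the hypothesis that the non-$\textbf{1}$ columns of $S$ are orthogonal to $\textbf{1}$ to see that the all-ones cross block conjugates to a matrix supported only on the $\textbf{1}$-coordinates (the paper phrases this as $S^{-1}\textbf{J}S=n\,\textbf{e}_1\textbf{e}_1^T$), with the WHD conclusion via $H_2\otimes S$ and Proposition~\ref{prop:tpAB} exactly as intended. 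One minor bookkeeping slip in your sketch: the cross-$J$ term does not ``kill'' the $[\textbf{1},\textbf{1}]^T$ coordinate --- it contributes $-n$ there, which is precisely what cancels the $+n$ from $nI$ to return eigenvalue $0$ --- but your final spectrum $\Lambda'$ and the overall argument are right.
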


\begin{proof}
Let $S=[\mathbf{1},\ldots,\mathbf{x}_n]$ such that $\Lambda_X=S^{-1}L(X)S$ and $\Lambda_Y=S^{-1}L(Y)S$, where $\Lambda_X$ and $\Lambda_Y$ are diagonal matrices. Since $L(X\vee Y)= \begin{bmatrix}
L(X)+nI&-\mathbf{J} \\  -\mathbf{J}&L(Y)+nI
\end{bmatrix}$, where $\mathbf{J}$ is the square all-ones matrix of appropriate size, one checks that 
\begin{align*}
\Lambda'&=\frac{1}{2}\begin{bmatrix} S^{-1}&S^{-1} \\ S^{-1}&-S^{-1}\end{bmatrix}L(X\vee Y)\begin{bmatrix} S&S \\ S&-S\end{bmatrix}\\
&=\begin{bmatrix} \frac{1}{2}(\Lambda_X+\Lambda_Y) +nI -S^{-1}\mathbf{J}S&\Lambda_X-\Lambda_Y \\  \Lambda_X-\Lambda_Y&\frac{1}{2}(\Lambda_X+\Lambda_Y) +nI +S^{-1}\mathbf{J}S\end{bmatrix}
\end{align*}
Note that the $(u,v)$ entry of $S^{-1}\mathbf{J}S$ is given by $\mathbf{e}_u^TS^{-1}\mathbf{J}S\mathbf{e}_v=\mathbf{e}_u^TS^{-1}\mathbf{J}\mathbf{x}_v$. By assumption of the columns of $S$ being orthogonal to $\mathbf{1}$, we have $\mathbf{J}\mathbf{x}_v=0$ unless $v=1$. Now, note that $S^{-1}$ has first row equal to $\frac{1}{n}\mathbf{1}$, and the rest are orthogonal to $\mathbf{1}$. Thus, if $v = 1$, then we obtain
\begin{align*}
\mathbf{e}_u^TS^{-1}\mathbf{J}\mathbf{x}_1=n\mathbf{e}_u^TS^{-1}\mathbf{1}=n\mathbf{e}_u^T\mathbf{e}_1.
\end{align*}
Consequently, the $(u,v)$ entry of $S^{-1}\mathbf{J}S$ is equal to $n$ whenever $u=v=1$ and 0 otherwise. This implies that $\Lambda'$ is diagonal if and only if $\Lambda_X=\Lambda_Y$, i.e., $X=Y$. In particular, if $X=Y$, then one checks that $\Lambda'=(0,\lambda_2+n,\ldots,\lambda_n+n,2n,\lambda_2+n,\ldots,\lambda_n+n)$. The same argument applies to $X\sqcup Y$.
\end{proof}

We note that Theorem \ref{thm:join} applies to both connected and disconnected graphs as long as the hypothesis about the matrix $S$ is satisfied. Moreover, we already know from Proposition \ref{comp} that $X\sqcup Y$ is always WHD whenever $X$ and $Y$ are. Now, notice that the matrix $\begin{bmatrix} \mathbf{1}&S[1]&\mathbf{1}&0 \\ \mathbf{1}&0&-\mathbf{1}&S[1] \end{bmatrix}$ in (\ref{union1}) diagonalizing $L(X\sqcup Y)$ can only be transformed to $\begin{bmatrix} S&S \\ S&-S\end{bmatrix}$ via column operations if and only if the eigenvalues of $L(X)$ and $L(Y)$ having the $j$th column of $S[1]$ as their eigenvector are equal for each $j$. Equivalently, $X=Y$. This shows that $\begin{bmatrix} S&S \\ S&-S\end{bmatrix}$ cannot diagonalize $L(X\sqcup Y)$ whenever $X\neq Y$.

For any integer $k\geq 2$, let $Z_k=X\vee\ldots \vee X$ denoted the $k$-fold join of $X$ with itself. More generally, if $X$ is a connected weighted WHD, then for all integers $k\geq 2$, $Z_k$ is also weighted WHD by \cite[Lemma 4.3]{adm2021}. The next result follows from Theorem \ref{thm:join} by induction.

\begin{corollary}
\label{tj}
With the assumption in Theorem \ref{thm:join}, we further suppose that $X$ is WHD and $P_X$ has pairwise orthogonal columns. Let $T_1=P_X$ and for $\ell\geq 2$, suppose $T_{\ell}=\left[ \begin{array}{ccccc} T_{\ell-1} &T_{\ell-1} \\  T_{\ell-1}&-T_{\ell-1}\end{array}\right]$. If $k=2^{\ell}$, then $Z_k$ is WHD and $P_{Z_k}=T_{\ell+1}$ has pairwise orthogonal columns.
\end{corollary}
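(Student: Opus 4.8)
\textbf{Proof proposal for Corollary \ref{tj}.}

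The plan is to induct on $\ell$. The base case $\ell=1$ means $k=2$, where $Z_2=X\vee X$; by Theorem \ref{thm:join}, $Z_2$ is WHD and is diagonalized by $\left[\begin{smallmatrix} P_X & P_X \\ P_X & -P_X\end{smallmatrix}\right]=T_2$. This matrix has pairwise orthogonal columns: since $P_X$ does, a direct computation of $T_2^T T_2$ using the block structure shows it equals $\text{diag}(2\,P_X^TP_X,\,2\,P_X^TP_X)$, which is diagonal. So $P_{Z_2}=T_2$ has pairwise orthogonal columns, completing the base case.

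For the inductive step, assume $k=2^{\ell}$ with $\ell\geq 2$ and that $Z_{k/2}=Z_{2^{\ell-1}}$ is WHD with $P_{Z_{k/2}}=T_\ell$ having pairwise orthogonal columns. The key observation is that the $k$-fold join $Z_k$ can be written as $Z_{k/2}\vee Z_{k/2}$, because the join operation on graphs is associative and $Z_k = (X\vee\cdots\vee X)$ with $k$ copies equals $(X\vee\cdots\vee X)\vee(X\vee\cdots\vee X)$ with $k/2$ copies on each side. Now I want to apply Theorem \ref{thm:join} with both graphs equal to $Z_{k/2}$ and with the diagonalizing matrix $S=T_\ell$. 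The hypotheses of Theorem \ref{thm:join} require that $S$ have first column $\textbf{1}$ and all other columns orthogonal to $\textbf{1}$: the first column of $T_\ell$ is $\textbf{1}$ (since $P_X$ is normalized, its first column is $\textbf{1}$, and the block-doubling construction preserves this and produces all-ones first column), and since $T_\ell$ has pairwise orthogonal columns by the inductive hypothesis, every other column is in particular orthogonal to $\textbf{1}$. Hence Theorem \ref{thm:join} applies with $X=Y=Z_{k/2}$, giving that $Z_k=Z_{k/2}\vee Z_{k/2}$ is diagonalized by $\left[\begin{smallmatrix} T_\ell & T_\ell \\ T_\ell & -T_\ell\end{smallmatrix}\right]=T_{\ell+1}$. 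The same pairwise-orthogonality computation as in the base case (now with $P_X$ replaced by $T_\ell$) shows $T_{\ell+1}^T T_{\ell+1}$ is diagonal, so $T_{\ell+1}$ is a weak Hadamard matrix with pairwise orthogonal columns. Thus $Z_k$ is WHD and $P_{Z_k}=T_{\ell+1}$, closing the induction.

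The only subtle point — and the place where one must be slightly careful rather than merely routine — is verifying that the hypotheses of Theorem \ref{thm:join} genuinely carry through at each stage: the theorem is stated for a fixed matrix $S$ diagonalizing both summands, and I need the diagonalizing matrix of $Z_{k/2}$ produced at step $\ell-1$ to again have the normalized form (first column $\textbf{1}$, rest orthogonal to $\textbf{1}$) so that the theorem can be reapplied. This is exactly guaranteed by the inductive hypothesis that $T_\ell$ has pairwise orthogonal columns together with the fact that the doubling construction preserves the all-ones first column, so there is no real obstacle, merely a bookkeeping check. Everything else is the block matrix multiplication already carried out in Theorem \ref{thm:join} and the elementary fact that joins associate.
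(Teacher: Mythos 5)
Your proof is correct and follows exactly the route the paper intends: the paper's own justification is simply that the result ``follows from Theorem \ref{thm:join} by induction,'' and your argument is a careful unpacking of that induction, using associativity of the join to write $Z_{2^\ell}=Z_{2^{\ell-1}}\vee Z_{2^{\ell-1}}$ and checking that the doubled matrix retains the normalized form and pairwise orthogonal columns at each step.
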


\begin{example}
\label{n=4}
Let $k=2^{\ell}$ and suppose $X\in\{K_4,C_4,K_2\sqcup K_2,O_4,K_4\backslash e,O_2\sqcup K_2\}$. Set $T_1=P_1$. As we know, $P_X=T_1$ from Example \ref{uncomp}. Invoking Corollary \ref{tj}, each $Z_k$ is WHD and $P_{Z_k}=T_{\ell+1}$ has pairwise orthogonal columns. In particular, if $X\in\{K_4\backslash e,O_2\sqcup K_2\}$, then $\{Z_k: 2^k, \ell\geq 1\}$ is an infinite family of WHD graphs that are not Hadamard diagonalizable, but $P_{Z_k}$ has pairwise orthogonal columns.
\end{example}

\subsection{Merge}

Let $X$ and $Y$ be two weighted graphs, each with $n$ vertices. The \textit{merge} of $X$ and $Y$ with respect to the integer weights $w_1$ and $w_2$, denoted $X \tensor[_{w_1}]{\odot}{_{w_2}} Y$, is the graph with Laplacian matrix
\begin{eqnarray}
\label{lapmerge}
    \begin{bmatrix} w_1L(X)+w_2D(Y)&-w_2A(Y) \\ -w_2A(Y)&w_1L(X)+w_2D(Y)\end{bmatrix}.
\end{eqnarray}
If $w_1=w_2=1$, then we simply write $X \tensor[_{w_1}]{\odot}{_{w_2}} Y$ as $X \odot Y$. In this case, if $X$ and $Y$ are graphs on the same vertex set that do not have an edge in common, then $X \odot Y$ is a \textit{double cover} of a graph with Laplacian matrix $L(X)+L(Y)$. In particular, if $X$ is unweighted and $Y=X^c$, then $X \odot Y$ is a double cover of $K_n$ (also called the \textit{switching graph} of $X$), while if $X=O_n$, then $X \odot Y$ is called the \textit{bipartite double} of $Y$ (also called the \textit{canonical double cover} of $Y$). The merge operation was introduced by Johnston et al., and was used to produce Hadamard diagonalizable graphs from smaller ones \cite{johnston2017}. As our next result implies, the same goal is achieved by the merge operation for WHD graphs.

\begin{theorem}\label{thm:merge}
Let $X$ be a weighted graph and $S$ be a matrix whose first column is $\mathbf{1}$ and all other columns are orthogonal to $\mathbf{1}$. Suppose $L(X)$ and $L(Y)$ are diagonalized by the same matrix $S$. Then $L(X \tensor[_{w_1}]{\odot}{_{w_2}} Y)$ is diagonalized by $\begin{bmatrix} S&S \\ S&-S\end{bmatrix}$ if and only if $Y$ is a weighted-regular graph. In particular, if $Y$ is weighted $k$-regular and we let $\Lambda_1=S^{-1}L(X)S$ and $\Lambda_2=S^{-1}L(Y)S$, where $\Lambda_1=\text{diag}(0,\lambda_2,\ldots,\lambda_n)$ and $\Lambda_2=\text{diag}(0,\theta_2,\ldots,\theta_n)$, then
\begin{equation*}
\Lambda'=\frac{1}{2}\left[ \begin{array}{ccccc} S^{-1} &S^{-1} \\  S^{-1}&-S^{-1}\end{array}\right]L(X \tensor[_{w_1}]{\odot}{_{w_2}} Y)\left[ \begin{array}{ccccc} S &S \\  S&-S\end{array}\right],
\end{equation*}
where $ \hspace{-0.02in} \Lambda'\hspace{-0.03in} =\hspace{-0.02in} \text{diag}(0,w_1\lambda_2+w_2\theta_2,\ldots,w_1\lambda_n+w_2\theta_n,2w_2k,w_1\lambda_2+w_2(2k-\theta_2),\ldots,w_1\lambda_n+w_2(2k-\theta_n)$.
\end{theorem}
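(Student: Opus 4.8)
The plan is to verify the claimed block-diagonalization by direct computation, mirroring the structure of the proof of Theorem~\ref{thm:join}. First I would write $B = \begin{bmatrix} S & S \\ S & -S \end{bmatrix}$ and $B^{-1} = \frac{1}{2}\begin{bmatrix} S^{-1} & S^{-1} \\ S^{-1} & -S^{-1} \end{bmatrix}$, and plug the explicit Laplacian~\eqref{lapmerge} of $X \tensor[_{w_1}]{\odot}{_{w_2}} Y$ into the congruence $\Lambda' = B^{-1} L(X \tensor[_{w_1}]{\odot}{_{w_2}} Y)\, B$. Carrying out the block multiplication, the diagonal blocks of $\Lambda'$ come out to $\tfrac12\big(S^{-1}(w_1 L(X) + w_2 D(Y))S + S^{-1}(w_1 L(X) + w_2 D(Y))S\big) \pm S^{-1}(w_2 A(Y))S = S^{-1}(w_1 L(X) + w_2 D(Y))S \pm w_2 S^{-1} A(Y) S$, and the off-diagonal blocks are $S^{-1}(w_1 L(X) + w_2 D(Y))S - S^{-1}(w_1 L(X) + w_2 D(Y))S = 0$ on one side and $\mp w_2 S^{-1} A(Y) S$ on the other — wait, one must be careful here: the off-diagonal blocks actually pick up the $A(Y)$ terms, so I would recompute slowly. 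The correct organization is: $L = w_1\big(\begin{smallmatrix} L(X) & 0 \\ 0 & L(X)\end{smallmatrix}\big) + w_2\big(\begin{smallmatrix} D(Y) & -A(Y) \\ -A(Y) & D(Y)\end{smallmatrix}\big)$; the first summand is block-scalar in the $\begin{smallmatrix} S & S \\ S & -S\end{smallmatrix}$ basis and contributes $w_1\,\mathrm{diag}(\Lambda_1,\Lambda_1)$; the second summand is $w_2 (H_2 \otimes I) \cdot \mathrm{something}$ — more precisely $\begin{smallmatrix} D(Y) & -A(Y) \\ -A(Y) & D(Y)\end{smallmatrix}$ conjugated by $B$ yields $\mathrm{diag}\big(S^{-1}(D(Y)-A(Y))S,\ S^{-1}(D(Y)+A(Y))S\big) = \mathrm{diag}\big(\Lambda_2,\ S^{-1}(D(Y)+A(Y))S\big)$, with the cross terms vanishing automatically because conjugating $H_2 \otimes M$ by $H_2 \otimes I$ block-diagonalizes it.

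The crux is then the term $S^{-1}(D(Y)+A(Y))S = S^{-1}(2D(Y) - L(Y))S = 2 S^{-1} D(Y) S - \Lambda_2$. Here is exactly where the weighted-regularity hypothesis on $Y$ enters: $L(Y)$ being diagonalized by $S$ does not by itself force $D(Y)$ to be diagonalized by $S$, but if $Y$ is weighted $k$-regular then $D(Y) = k I_n$, so $S^{-1} D(Y) S = k I_n$, and the block becomes $2kI_n - \Lambda_2 = \mathrm{diag}(2k, 2k-\theta_2,\ldots,2k-\theta_n)$. Combining, the lower-right block of $\Lambda'$ is $w_1\Lambda_1 + w_2(2kI_n - \Lambda_2) = \mathrm{diag}(2w_2k,\ w_1\lambda_2 + w_2(2k-\theta_2),\ \ldots,\ w_1\lambda_n + w_2(2k-\theta_n))$ and the upper-left block is $w_1\Lambda_1 + w_2\Lambda_2 = \mathrm{diag}(0,\ w_1\lambda_2+w_2\theta_2,\ \ldots,\ w_1\lambda_n+w_2\theta_n)$, matching the claimed $\Lambda'$ exactly; the factor $\tfrac12$ in the statement is absorbed because $B^{-1}$ carries a $\tfrac12$ and $B^{T}$-type cancellation is not needed since we use the genuine inverse.

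For the ``if and only if'' I would argue the converse by contrapositive: if $Y$ is not weighted-regular, then $D(Y)$ is a non-scalar diagonal matrix, and although $S^{-1}D(Y)S$ need not be off-diagonal, the point is subtler — so instead I would observe that $L(X \tensor[_{w_1}]{\odot}{_{w_2}} Y)$ being diagonalized by $B$ forces, via the block computation above, that $S^{-1}(D(Y) + A(Y))S$ be diagonal; since $S^{-1} A(Y) S = S^{-1}(D(Y) - L(Y))S$ and $S^{-1}L(Y)S = \Lambda_2$ is diagonal, this is equivalent to $S^{-1} D(Y) S$ being diagonal, i.e.\ $D(Y)$ commuting with $S S^T$-type structure — and then I would note that $D(Y)$ diagonalized by $S$ together with $\textbf{1}$ being a column of $S$ forces every row sum of $D(Y)$ equal (since $D(Y)\textbf{1}$ must be a multiple of $\textbf{1}$), i.e.\ $Y$ is weighted-regular. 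The main obstacle I anticipate is precisely this converse direction: making rigorous that ``$D(Y)$ is diagonalized by $S$'' $\iff$ ``$Y$ weighted-regular'' given only that $S$ has $\textbf{1}$ as a column and the remaining columns $\perp \textbf{1}$ — one direction ($D(Y)=kI$ trivially diagonalized) is free, but the other needs the observation that a diagonal matrix sharing the eigenbasis $S$ with the all-ones vector as one eigenvector must itself have constant diagonal, which I would prove by noting $D(Y)\textbf{1} = \mu\textbf{1}$ for the eigenvalue $\mu$ attached to the column $\textbf{1}$, forcing all diagonal entries equal to $\mu$.
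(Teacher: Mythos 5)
Your proposal is correct and takes essentially the same route as the paper: both conjugate the merge Laplacian by $\begin{bmatrix} S&S\\ S&-S\end{bmatrix}$ (with the $\tfrac12$ coming from the inverse), observe the result is diagonal precisely when $S^{-1}D(Y)S$ is, and conclude that $D(Y)$ must be a scalar matrix, which gives the stated $\Lambda'$ when $D(Y)=kI$. Your converse step—reading off $D(Y)\textbf{1}=\mu\textbf{1}$ from the all-ones column of $S$—is in fact a slightly tidier justification than the paper's entrywise cancellation $d_iS_{ij}=f_jS_{ij}$, which tacitly relies on a nowhere-zero column of $S$ anyway.
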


\begin{proof}
     Using (\ref{lapmerge}), one checks that 
\begin{align*}
\Lambda'&=\frac{1}{2}\begin{bmatrix} S^{-1}&S^{-1} \\ S^{-1}&-S^{-1}\end{bmatrix}L(X \tensor[_{w_1}]{\odot}{_{w_2}} Y)\begin{bmatrix} S&S \\ S&-S\end{bmatrix}\\
&=\begin{bmatrix} w_1\Lambda_1+w_2\Lambda_2 &0 \\  0&w_1\Lambda_1 -w_2\Lambda_2+2w_2S^{-1}D(Y)S\end{bmatrix}.
\end{align*}
Thus, $\Lambda'$ is diagonal if and only if $S^{-1}D(Y)S$ is diagonal. However, $S^{-1}D(Y)S=F$ for some diagonal matrix $F$ if and only if $D(Y)S=SF$. If we let $D(Y)=\text{diag}(d_1,\ldots,d_n)$ and $F=\text{diag}(f_1,\ldots,f_n)$, then $D(Y)S=SF$ if and only if $d_iS_{ij}=f_jS_{ij}$ for each $i$ and $j$. Equivalently, $d_i=f_j$ for each $i$ and $j$, i.e., $D(Y)$ is a scalar multiple of the identity. The rest is straightforward.
\end{proof}

The following is immediate from Theorem \ref{thm:merge}.

\begin{corollary}
\label{merge}
Let $X$ and $Y$ be weighted WHD graphs such that $P_X=P_Y$. Then $Z=X \tensor[_{w_1}]{\odot}{_{w_2}} Y$ is weighted WHD if and only if $Y$ is weighted-regular, in which case $P_Z=\begin{bmatrix} P_X&P_X \\ P_X&-P_X\end{bmatrix}$. The following also hold.
\begin{enumerate}
\item If we add that $Y$ is weighted Hadamard diagonalizable, then $X \tensor[_{w_1}]{\odot}{_{w_2}} Y$ is weighted WHD.
\item If $X$ is weighted-regular, then $X \tensor[_{w_1}]{\odot}{_{w_2}} X$ is weighted WHD. Moreover, if $X$ is unweighted and regular, then $X \tensor[_{w_1}]{\odot}{_{w_2}} X^c$ and $X \odot X^c$ are weighted WHD graphs.
\end{enumerate}
\end{corollary}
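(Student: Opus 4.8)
The plan is to deduce everything from Theorem~\ref{thm:merge}, since Corollary~\ref{merge} is stated as being immediate from it, and the embedded numbered items are just specializations. First I would verify the main biconditional: if $X$ and $Y$ are weighted WHD with $P_X = P_Y =: S$, then $S$ is a matrix whose first column is $\textbf{1}$ with all other columns orthogonal to $\textbf{1}$ (this is exactly the normalization of a weak Hadamard matrix), so the hypotheses of Theorem~\ref{thm:merge} are met. The theorem then says $L(Z)$ with $Z = X \tensor[_{w_1}]{\odot}{_{w_2}} Y$ is diagonalized by $\begin{bmatrix} S & S \\ S & -S\end{bmatrix}$ if and only if $Y$ is weighted-regular. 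It remains to observe that $\begin{bmatrix} S & S \\ S & -S\end{bmatrix}$ is itself a weak Hadamard matrix: this is the case $A = H_2$ of the corollary to Proposition~\ref{prop:tpAB} (Hadamard tensor weak Hadamard is weak Hadamard), since $\begin{bmatrix} S & S \\ S & -S\end{bmatrix} = H_2 \otimes S$. Hence $Z$ is weighted WHD exactly when $Y$ is weighted-regular, and in that case $P_Z = H_2 \otimes P_X = \begin{bmatrix} P_X & P_X \\ P_X & -P_X\end{bmatrix}$; one should also note this is normalized since its first column is $\textbf{1} \otimes \textbf{1} = \textbf{1}$.

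For item (1), I would argue that if $Y$ is weighted Hadamard diagonalizable then in particular $Y$ is weighted-regular — a Hadamard-diagonalizable graph has $\textbf{1}$ and $H/\sqrt{n}$-type columns as Laplacian eigenvectors, and by Theorem~\ref{nk} (or more elementarily, since the all-ones vector being an eigenvector of $L(Y) = D(Y) - A(Y)$ together with $A(Y)\textbf{1}$ forces constant row sums of $A(Y)$, i.e. regularity). So item (1) is just the forward direction of the main biconditional. Here a small subtlety: the main statement assumes $P_X = P_Y$, so for item (1) one needs $P_X = P_Y$ to still hold, which is part of the standing hypothesis of the corollary; I would simply carry that hypothesis along.

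For item (2), setting $Y = X$ trivially gives $P_X = P_Y$, and $X$ weighted-regular is precisely the regularity hypothesis, so $X \tensor[_{w_1}]{\odot}{_{w_2}} X$ is weighted WHD by the main statement. For the unweighted regular case with $Y = X^c$: Proposition~\ref{comp} gives that $X^c$ is diagonalized by the same $S$ that diagonalizes $X$ (since $X$ WHD means $P_X$ has the required orthogonality-to-$\textbf{1}$ property), so $P_{X^c} = P_X$; and $X^c$ is regular whenever $X$ is (complement of a $k$-regular graph on $n$ vertices is $(n-1-k)$-regular). Thus $X \tensor[_{w_1}]{\odot}{_{w_2}} X^c$ is weighted WHD, and the $w_1 = w_2 = 1$ case is the double cover $X \odot X^c$ of $K_n$ described in the text before the theorem. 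I expect no real obstacle here; the only thing to be slightly careful about is confirming that ``$P_X$ has pairwise orthogonal columns'' is \emph{not} needed for WHD-ness (only for the statement that $P_Z$ inherits pairwise orthogonality, which is not claimed in the corollary), and that the regularity condition is genuinely the weighted notion (constant weighted degree), matching how Theorem~\ref{thm:merge} extracts ``$D(Y)$ is a scalar multiple of the identity.''
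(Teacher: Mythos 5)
Your argument is correct and takes essentially the same route as the paper, which simply declares the corollary immediate from Theorem~\ref{thm:merge}; you fill in precisely the background the paper leaves implicit (that $\begin{bmatrix} S&S\\ S&-S\end{bmatrix}=H_2\otimes S$ is a weak Hadamard via the corollary to Proposition~\ref{prop:tpAB}, that Hadamard diagonalizable graphs are regular, that complements of regular graphs are regular, and Proposition~\ref{comp} giving $P_{X^c}=P_X$). The only caveat is one you share with the paper: the ``only if'' direction is read off Theorem~\ref{thm:merge}, which strictly speaking only rules out the specific diagonalizer $\begin{bmatrix} S&S\\ S&-S\end{bmatrix}$ rather than every possible weak Hadamard matrix for $Z$, so this looseness is inherited from the statement itself and is not a defect of your write-up relative to the paper's.
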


Since $\begin{bmatrix} S&S \\ S&-S\end{bmatrix}$ in Theorem \ref{thm:merge} is a Hadamard matrix whenever $S$ is, we see that $X \tensor[_{w_1}]{\odot}{_{w_2}} Y$ is Hadamard diagonalizable whenever $X$ and $Y$ are Hadamard diagonalizable. This was first observed in \cite{johnston2017}, and is generalized to WHD graphs by Corollary \ref{merge}.

Let $X$ and $Y$ be weighted WHD graphs with $P_X=P_Y$. Unlike $X\vee Y$, it is possible for $X \tensor[_{w_1}]{\odot}{_{w_2}} Y$ to be diagonalized by $\begin{bmatrix} P_X&P_X \\ P_X&-P_X\end{bmatrix}$ even if $X\neq Y$. Thus, the merge operation is advantageous in producing bigger weighted WHD graphs from smaller ones.


\begin{example}
Let $X\in\{K_4\backslash e,O_2\sqcup K_2,K_4,C_4,K_2\sqcup K_2,O_4\}$ and $Y\in\{K_4,C_4,K_2\sqcup K_2,O_4\}$. From Example \ref{n=4}, $X$ is WHD, $Y$ is Hadamard diagonalizable (and thus, regular), and the Laplacian matrices of both $X$ and $Y$ are diagonalized by the matrix $P_1$ in (\ref{Pone}), which has pairwise orthogonal columns. By Corollary \ref{merge}(1), we conclude that $Z=X \tensor[_{w_1}]{\odot}{_{w_2}} Y$ is a weighted WHD graph and $P_Z=\begin{bmatrix} P_1&P_1 \\ P_1&-P_1\end{bmatrix}$.
\end{example}

For our next example, recall that conference graphs are strongly regular graphs whose number of vertices must be congruent to 1 (mod 4). In \cite[Theorem 5.9]{adm2021}, an infinite family of conference graphs are shown to be WHD.

\begin{example}
\label{confe}
Suppose $X$ is isomorphic to either (i) $K_n$ with $n\not\equiv 0$ (mod 4), (ii) $K_{2n}$ minus a perfect matching, where $n$ is odd, (iii) $K_{n,\ldots,n}$ with $mn\not\equiv 0$ (mod 4) number of vertices, or (iv) a conference graph that is WHD. Then the number of vertices of $X$ is not a multiple of $4$, and so $X$ is not Hadamard diagonalizable. However, the graphs in (i), (ii) and (iii) are WHD by Lemma 1.5, Corollary 4.4 and Corollary 4.9 in \cite{adm2021}, respectively. Thus, $X\vee X$ is WHD by Theorem \ref{thm:join}. Moreover, since $X$ is unweighted and regular, Corollary \ref{merge}(2) implies that $X \tensor[_{w_1}]{\odot}{_{w_2}} X$ and $X \tensor[_{w_1}]{\odot}{_{w_2}} X^c$ are weighted WHD graphs. Further, the weak Hadamard diagonalizing $L(Z)$, where $Z\in\{X\vee X,X \tensor[_{w_1}]{\odot}{_{w_2}} X,X \tensor[_{w_1}]{\odot}{_{w_2}} X^c\}$, is $\begin{bmatrix} P_X&P_X \\ P_X&-P_X\end{bmatrix}$. This yields infinite families of WHD graphs that are not Hadamard diagonalizable and the columns of $P_X$ are not pairwise orthogonal.
\end{example}

It is also worth mentioning that if $X$ is the Kneser graph $K(5,2)$ (the Petersen graph) or $K(6,2)$, then \cite[Lemma 5.14]{adm2021} and the same argument used in Example \ref{confe} imply that $X\vee X$, $X \tensor[_{w_1}]{\odot}{_{w_2}} X$ and $X \tensor[_{w_1}]{\odot}{_{w_2}} X^c$ are weighted WHD graphs that are not Hadamard diagonalizable.


\begin{corollary}
\label{bipdoub}
Let $Y$ be a weighted-regular WHD graph. Then the bipartite double of $Y$ is WHD.
\end{corollary}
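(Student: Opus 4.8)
The plan is to derive the WHD assertion as a direct instance of Corollary~\ref{merge}, and to derive the connectivity criterion from the standard walk-parity description of canonical double covers (which is also visible in the spectrum provided by Theorem~\ref{thm:merge}). For the WHD part, note first that the empty graph $O_n$ on $n=|V(Y)|$ vertices is weighted WHD: $L(O_n)$ is the zero matrix and hence is diagonalized by any invertible weak Hadamard matrix, in particular by $P_Y$, so we may legitimately take $P_{O_n}=P_Y$. By definition the bipartite double of $Y$ is the merge $Z=O_n\odot Y$ (the case $w_1=w_2=1$), and $L(O_n)$ and $L(Y)$ are diagonalized by the common matrix $S=P_Y$. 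Since $Y$ is weighted-regular, Corollary~\ref{merge} applies verbatim and shows that $Z$ is weighted WHD with $P_Z=\begin{bmatrix}P_Y&P_Y\\P_Y&-P_Y\end{bmatrix}$.

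For connectivity I would use the realization $V(Z)=V(Y)\times\{0,1\}$ with $(u,0)\sim(v,1)$ exactly when $uv\in E(Y)$; in particular $Z$ is bipartite with parts $V(Y)\times\{0\}$ and $V(Y)\times\{1\}$. Any walk in $Z$ projects to a walk of the same length in $Y$, and each traversed edge toggles the second coordinate, so $(u,i)$ and $(v,j)$ lie in the same component of $Z$ if and only if $Y$ contains a $u$--$v$ walk of length $\equiv i+j\pmod 2$. Consequently: if $Y$ is connected and non-bipartite, then any two of its vertices are joined by $Y$-walks of both parities (splice in an odd closed walk through a vertex of an odd cycle when the parity is wrong), so $Z$ is connected; if $Y$ is disconnected, then vertices over different components of $Y$ are unreachable from one another in $Z$; and if $Y$ is connected and bipartite, then all $u$--$v$ walks in $Y$ have a single fixed parity, so $Z$ splits into exactly two components, each a copy of $Y$. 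Hence $Z$ is connected if and only if $Y$ is connected and non-bipartite, which under the standing hypothesis that $Y$ is connected is the claimed equivalence. Alternatively, writing $Y$ as weighted $k$-regular with $k\ge 1$, Theorem~\ref{thm:merge} gives the Laplacian spectrum of $Z$ as $0,\theta_2,\dots,\theta_n,2k,2k-\theta_2,\dots,2k-\theta_n$, where $\theta_2,\dots,\theta_n$ are the Laplacian eigenvalues of $Y$ other than one copy of $0$; since $2k-\theta_i=0$ forces the adjacency eigenvalue $k-\theta_i=-k$, which occurs with multiplicity equal to the number of bipartite components of $Y$, the eigenvalue $0$ of $L(Z)$ has multiplicity $1+(c(Y)-1)+b(Y)=c(Y)+b(Y)$, and this equals $1$ precisely when $Y$ is connected and non-bipartite.

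The WHD half is pure bookkeeping; the single point worth a sentence is that $P_Y$ may be used as $P_{O_n}$, which is immediate from $L(O_n)=0$. The mildly delicate point is the bipartite case of the connectivity claim — showing the cover breaks into \emph{exactly} two sheets, not merely that it is disconnected — which the walk-parity argument (or the eigenvalue-multiplicity count) settles cleanly. It is also worth flagging that the ``only if'' direction genuinely requires $Y$ to be connected, since a disconnected but non-bipartite $Y$ still has a disconnected bipartite double; the corollary should be read with $Y$ connected, or equivalently stated as ``$Z$ is connected if and only if $Y$ is connected and non-bipartite''.
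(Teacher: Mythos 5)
Your proof is correct, and it differs from the paper's in two respects worth noting. For the WHD claim the paper splits into cases: when $Y$ is bipartite it identifies the bipartite double with $Y\sqcup Y$ and invokes Proposition~\ref{union}, and only when $Y$ is non-bipartite does it take $w_1=w_2=1$ and $X=O_n$ in Corollary~\ref{merge}. You instead observe that $L(O_n)=0$ is diagonalized by $P_Y$, so the merge corollary applies uniformly, with no case distinction; this is cleaner, and the observation that one may take $P_{O_n}=P_Y$ is exactly the point that makes it work. (One caveat you inherit from the paper itself: Corollary~\ref{merge} rests on Theorem~\ref{thm:merge}, whose hypothesis is that the columns of $S$ other than $\mathbf{1}$ are orthogonal to $\mathbf{1}$; this is automatic when $Y$ is connected, since those columns are then eigenvectors for nonzero Laplacian eigenvalues, but can fail for disconnected $Y$, cf.\ Remark~\ref{D} — the same implicit assumption is present in the paper's argument, so you are at the same level of rigor.) For the connectivity claim the paper gives no explicit argument — it is implicit that the double of a bipartite graph is $Y\sqcup Y$, and the converse is treated as standard — whereas you prove the full equivalence twice over, by the walk-parity description of the canonical double cover and by counting the multiplicity of the Laplacian eigenvalue $0$ via the spectrum from Theorem~\ref{thm:merge}; your remark that the ``only if'' direction genuinely needs $Y$ connected (a disconnected non-bipartite $Y$, e.g.\ $K_3\sqcup K_3$, has a disconnected double) is a fair and accurate reading of how the corollary must be interpreted. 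In short, your route uses the same key tool but is more uniform and more self-contained than the paper's.
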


\begin{proof}
The bipartite double of a bipartite graph $Y$ is simply $Y\sqcup Y$. Thus, if $Y$ is bipartite, then the bipartite double of $Y$ is WHD by Proposition \ref{union}. If $Y$ is non-bipartite, then taking $w_1=w_2=1$ and $X$ as the empty graph in Corollary \ref{merge} yields the desired result.
\end{proof}

\begin{example}
If $Y$ is one of the graphs in (i)-(iv) in Example \ref{confe} with the additional condition that $n\geq 3$ in (i) and $m\geq 3$ in (iii), then $Y$ is a regular non-bipartite weighted graph that is WHD, and so Corollary \ref{bipdoub} implies that the bipartite double of $Y$ is a connected WHD graph that is not Hadamard diagonalizable. On the other hand, if $Y\in\{C_6,K_{n,n}\}$, then $Y$ is regular, bipartite and WHD, and so the bipartite double of $Y$ is a disconnected WHD graph by Corollary \ref{bipdoub}.
\end{example}

For two weighted graphs $X$ and $Y$, denote by $X\square Y$, $X\boxtimes Y$ and $X\times Y$ the Cartesian, strong and direct products of $X$ and $Y$, which are graphs whose adjacency matrices are given by $A(X)\otimes I+I\otimes A(Y)$, $A(X)\otimes A(Y)$ and $A(X)\otimes I+I\otimes A(Y)+A(X)\otimes A(Y)$ respectively. Part (1) of the following result can be viewed as an extension of \cite[Proposition 3.3]{adm2021}.

\begin{theorem}
\label{prod}
Let $X$ and $Y$ be weighted WHD graphs.
\begin{enumerate}
\item  Suppose $X$ and $Y$ are weighted regular, and let $\star\in\{\square,\boxtimes,\times\}$. If $P_X$ has pairwise orthogonal columns, then $Z=X\star Y$ is WHD with $P_{Z}=P_X\otimes P_Y$.
\item If $P_X=P_Y$, then $Z=X\tensor[_{w_1}]{\odot}{_{w_2}}Y$ is WHD with $P_Z=\begin{bmatrix} P_X&P_X \\ P_X&-P_X\end{bmatrix}$.
\end{enumerate}
If we add that $P_Y$ has pairwise orthogonal columns, then so does $P_{Z}$ in both cases.
\end{theorem}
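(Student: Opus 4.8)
The plan is to prove Theorem~\ref{prod} by reducing both parts to results already established in the excerpt, and to handle the final ``pairwise orthogonal columns'' remark uniformly at the end. For part~(1), I would first recall the standard fact that the Laplacian of each graph product $X \star Y$ with $\star \in \{\square, \boxtimes, \times\}$ is a polynomial (with matrix-tensor-product terms) in $L(X)\otimes I$, $I \otimes L(Y)$, and in the $\boxtimes, \times$ cases also in $A(X)\otimes A(Y)$; more precisely, since $X$ and $Y$ are regular-on-each-eigenvector in the relevant sense (the columns of $P_X, P_Y$ are simultaneous Laplacian eigenvectors, and in fact $A(X)$-eigenvectors when the graph is regular, which is automatic here since every WHD graph with a column equal to $\textbf{1}$ in its diagonalizing matrix... — actually I should be careful: not every WHD graph is regular), I would instead argue directly that $P_X \otimes P_Y$ simultaneously diagonalizes all the matrices $L(X)\otimes I$, $D(X)\otimes I$, $A(X)\otimes I$, $I \otimes L(Y)$, $I \otimes D(Y)$, $I \otimes A(Y)$, hence diagonalizes $L(X\star Y)$ for each of the three products. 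The point is that $P_X$ diagonalizes $L(X)$ and $D(X)$ (the latter since $D(X) = L(X)+A(X)$ and $A(X)$; one needs that $P_X$ diagonalizes $A(X)$ too — this holds because the columns of $P_X$ other than $\textbf{1}$ are $A(X)$-eigenvectors as they are $L(X)$-eigenvectors orthogonal to $\textbf{1}$ when... hmm, this is only clean when $X$ is regular). To keep the argument honest I would lean on the fact that $L(X\square Y) = L(X)\otimes I + I \otimes L(Y)$ needs only $L$, and for $\boxtimes$ and $\times$ invoke the hypothesis structure of \cite[Proposition 3.3]{adm2021} which the theorem explicitly extends.

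Concretely, for part~(1) I would: (a) observe $P_X \otimes P_Y$ has entries in $\{-1,0,1\}$; (b) show $(P_X \otimes P_Y)^T (P_X \otimes P_Y) = (P_X^T P_X) \otimes (P_Y^T P_Y)$, and note that since $P_X$ has pairwise orthogonal columns $P_X^T P_X$ is diagonal, so the tensor product is tridiagonal (in fact block-tridiagonal with blocks governed by $P_Y^T P_Y$, which is exactly the situation of Proposition~\ref{prop:tpAB} with $A = P_X$), hence $P_X \otimes P_Y$ is a weak Hadamard matrix; (c) verify $(P_X \otimes P_Y)^{-1} L(Z)(P_X\otimes P_Y)$ is diagonal using the product formula for $L(Z)$ and the fact that $P_X$, $P_Y$ diagonalize the Laplacian, degree, and adjacency matrices of their respective graphs. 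For part~(2), this is nothing more than Corollary~\ref{merge} / Theorem~\ref{thm:merge} restated: when $P_X = P_Y$ (so in particular $L(X)$ and $L(Y)$ share the diagonalizing matrix $S = P_X$ whose first column is $\textbf{1}$ and whose other columns are orthogonal to $\textbf{1}$), Theorem~\ref{thm:merge} gives that $\begin{bmatrix} P_X & P_X \\ P_X & -P_X \end{bmatrix}$ diagonalizes $L(X \tensor[_{w_1}]{\odot}{_{w_2}} Y)$ — but one must also check this block matrix is a weak Hadamard matrix, which follows from Proposition~\ref{prop:tpAB} or the Sylvester-type remark (it equals $H_2 \otimes$-ish structure; more precisely it is $\begin{bmatrix}1&1\\1&-1\end{bmatrix} \otimes$ applied blockwise), noting $H_2$ has orthogonal columns.

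For the final sentence, if additionally $P_Y$ has pairwise orthogonal columns: in case~(1), $P_{Z}^T P_{Z} = (P_X^T P_X)\otimes(P_Y^T P_Y)$ is a tensor product of two diagonal matrices, hence diagonal, so $P_Z$ has pairwise orthogonal columns. In case~(2), $P_Z^T P_Z = \begin{bmatrix} 2P_X^T P_X & 0 \\ 0 & 2 P_X^T P_X\end{bmatrix}$ (the cross terms $P_X^T P_X - P_X^T P_X$ vanish), which is diagonal whenever $P_X^T P_X$ is, i.e. whenever $P_X = P_Y$ has pairwise orthogonal columns. So both cases follow by a one-line computation of $P_Z^T P_Z$.

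The main obstacle I anticipate is part~(1) for the strong and direct products: unlike the Cartesian product, $L(X \boxtimes Y)$ and $L(X\times Y)$ are \emph{not} expressible purely in terms of $L(X)\otimes I$ and $I\otimes L(Y)$ — they involve $D(X)\otimes D(Y)$, $A(X)\otimes A(Y)$, $D(X)\otimes I$, $I \otimes D(Y)$ and so on. So I genuinely need $P_X$ to simultaneously diagonalize $L(X)$, $A(X)$, and $D(X)$ (and likewise $P_Y$). This forces a regularity-type hypothesis or a careful appeal to how \cite[Proposition 3.3]{adm2021} is set up; the cleanest route is probably to note that $X$ being WHD with diagonalizing matrix $P_X$ whose columns other than $\textbf{1}$ are orthogonal to $\textbf{1}$ means $D(X)$ — hence $A(X) = D(X) - L(X)$ — acts as a scalar on each such eigenvector only when $X$ is regular, so I would either add the (harmless, and surely intended) standing assumption that the products are taken of regular WHD graphs, or cite that the relevant $D(X)\otimes(\cdot)$ terms are handled exactly as in \cite{johnston2017, adm2021}. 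I would flag this dependence explicitly rather than hide it. Everything else — the entry-in-$\{-1,0,1\}$ check, the tridiagonality via Proposition~\ref{prop:tpAB}, and the final orthogonality computation — is routine.
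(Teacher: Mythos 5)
Your proposal follows the paper's own route exactly: part (1) is Proposition \ref{prop:tpAB} plus the claim that $P_X\otimes P_Y$ diagonalizes $L(X\star Y)$, part (2) is an appeal to Theorem \ref{thm:merge}/Corollary \ref{merge}, and the closing claim is the one-line computation of $P_Z^TP_Z$ that you give (both of your computations there are correct). The paper's proof is exactly this, stated in two sentences.

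The step you hesitate over is, however, a genuine issue and not an optional scruple, and the paper's proof does not address it either: it simply asserts ``the fact that $P_X\otimes P_Y$ diagonalizes $X\star Y$.'' For $\star=\square$ only $L(X\square Y)=L(X)\otimes I+I\otimes L(Y)$ is needed and the claim is clean; but for the two products built from $A(X)\otimes A(Y)$ the Laplacian involves $D(X)\otimes D(Y)$, so $P_X\otimes P_Y$ is an eigenbasis only if $P_X$ also diagonalizes $D(X)$ and $A(X)$ (equivalently, the signless Laplacian), which WHD does not give for non-regular graphs. Concretely, take $X=K_4\backslash e$ with $P_X=P_1$ from Example \ref{ex:K4-e} and $Y=K_2$, and consider the product whose adjacency matrix is $A(X)\otimes A(Y)$: the column $(1,1,-1,-1)^T\otimes(1,-1)^T$ of $P_X\otimes P_Y$ is mapped by the product Laplacian to $\bigl(D(X)+A(X)\bigr)(1,1,-1,-1)^T\otimes(1,-1)^T=(2,2,0,0)^T\otimes(1,-1)^T$, which is not a scalar multiple of it, so $P_X\otimes P_Y$ does not diagonalize this Laplacian. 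Hence your proposed repair (assume regularity, or restrict to $\square$) is actually necessary to make part (1) go through for $\boxtimes$ and $\times$. One further caveat that you share with the paper rather than fix: in part (2) you invoke Theorem \ref{thm:merge} for the diagonalization, but that theorem delivers it if and only if $Y$ is weighted-regular (and Corollary \ref{merge}(1) assumes $Y$ Hadamard diagonalizable), so the bare hypothesis $P_X=P_Y$ in the statement is not covered by the citation; the same regularity assumption needs to be flagged there as well.
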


\begin{proof}
(1) is immediate from Proposition \ref{prop:tpAB} and the fact that $P_X\otimes P_Y$ diagonalizes $X\star Y$. (2) follows from Corollary \ref{merge}(1).
\end{proof}




We end this section with some examples of connected WHD graphs on eight vertices where the Laplacian matrix is diagonalized by a weak Hadamard with pairwise orthogonal columns.

\begin{example}
\label{eight}
Let $X,Y\in\{K_4,C_4,K_2\sqcup K_2,O_4,K_4\backslash e,O_2\sqcup K_2\}$. Consider $R=\begin{bmatrix} \mathbf{1}&\mathbf{1}&P_1[1]&0 \\ \mathbf{1}&-\mathbf{1}&0&P_1[1]\end{bmatrix}$ and $T=\begin{bmatrix} P_1&P_1 \\ P_1&-P_1\end{bmatrix}$, where $P_1$ is given in (\ref{Pone}). Note that $R$ and $S$ are weak Hadamards with pairwise orthogonal columns. We note that  $R$ is equivalent (via column permutations) to the weak Hadamard matrix of order $2^3$ whose construction is outlined in Section~\ref{sec:constr}(6). 

The following connected unweighted graphs on 8 vertices have their Laplacian matrices diagonalized by a weak Hadamard with pairwise orthogonal columns.
\begin{enumerate}
\item $(X\sqcup Y)^c$, diagonalized by $R$
\item $X\vee X$, diagonalized by $T$.
\item $X \odot Y$ with $X\neq O_4$ and $Y\in\{K_4,C_4,K_2\sqcup K_2\}$, diagonalized by $T$. 
\item The bipartite double of $K_4$ (isomorphic to $K_{4,4}$ minus a perfect matching), diagonalized by $T$.
\end{enumerate}
\end{example}

\begin{remark}
Since the set $\{K_4,C_4,K_2\sqcup K_2,O_4,K_4\backslash e,O_2\sqcup K_2\}$ is closed under complements and $X\vee Y=(X^c\sqcup Y^c)^c$ for unweighted graphs $X$ and $Y$, Example \ref{eight}(1) yields 21 graphs of the form $X\vee Y$,where  $X,Y\in\{K_4,C_4,K_2\sqcup K_2,O_4,K_4\backslash e,O_2\sqcup K_2\}$. This includes the 6 graphs in Example \ref{eight}(2). As $K_4\backslash e\vee K_4\backslash e$ and $C_4\vee K_4$ are isomorphic, 20 amongst these 21 graphs are nonisomorphic.

Since $X \odot Y$ and $Y \odot X$ are isomorphic, Example \ref{eight}(3) yields 8 nonisomorphic connected graphs, namely $(O_2\sqcup K_2) \odot K_4$, $(K_2\sqcup K_2) \odot C_4$ (isomorphic to $O_4\odot K_4$ in Example \ref{eight}(4), the hypercube on 8 vertices), $(K_2\sqcup K_2) \odot K_4$ (isomorphic to $K_2\square K_4$, the complement of the hypercube on 8 vertices), $K_4 \odot K_4$ (isomorphic to $C_4\vee C_4$), $C_4 \odot K_4$ (isomorphic to $(K_2\sqcup K_2)\vee (K_2\sqcup K_2)$), $C_4 \odot C_4$ (isomorphic to $O_4\vee O_4$), $K_4\backslash e \odot K_4$ (isomorphic to $(K_2\sqcup K_2)\vee C_4$), $K_4\backslash e\odot  C_4$ (isomorphic to $(K_2\sqcup K_2)\vee C_4$).

Consequently, there are at least 23 nonisomorphic connected WHD graphs on 8 vertices whose Laplacian matrices are diagonalized by a weak Hadamard matrix with pairwise orthogonal columns. We display them in Table \ref{tab}. Note that 6 graphs of these 23 graphs are Hadamard diagonalizable (see \cite[Table 1]{breen}), namely $K_4\vee K_4\cong K_8$, $ C_4\vee C_4\cong K_{2,2,2,2}$, $(K_2\sqcup K_2)\vee (K_2\sqcup K_2)$, $O_4\vee O_4\cong K_{4,4}$, $O_2\odot K_4$ (the hypercube on 8 vertices), and $K_2\square K_4\cong (O_2\odot K_4)^c$. The other 17 graphs are not Hadamard diagonalizable because they are not regular. Further, the three graphs $(O_2\sqcup K_2) \odot K_4$, $O_4\odot K_4$, and $K_2\square K_4$ are diagonalized by the matrix $T$ in Example \ref{eight}, while the rest are diagonalized by the matrix $R$ in Example \ref{eight}. This is done through a careful reordering of the Laplacian spectrum of each graph. 
\end{remark}

\section{State transfer in weakly Hadamard diagonalizable graphs}\label{sec:qst}

Motivated by quantum information theory, we are interested in whether a given graph $X$ exhibits different types of quantum state transfer. 
The transition matrix of the graph $X$ is the time-dependent unitary matrix $U(t)=e^{itL(X)}$. We say that perfect state transfer (PST) occurs if there exists a time $t$ such that a standard basis vector $\mathbf{e}_u$, called the initial state, evolves to a different state $\mathbf{e}_v$ up to a phase factor. More formally, the graph $X$ exhibits PST between vertices $u$ and $v$ if there exists a time $t$ such that
\begin{align}
 U(t)\mathbf{e}_u=\gamma \mathbf{e}_v, 
\end{align}
where $\gamma\in\mathbb{C}$. Since $U(t)$ is unitary, $\gamma$  satisfies $|\gamma|^2=1$, and so PST may be equivalently expressed as
\begin{align}
|\mathbf{e}_u^T U(t) \mathbf{e}_v|^2=1.
\end{align}
If $u=v$ in the above, then vertex $u$ is said to be periodic. Periodicity and strong cospectrality are necessary conditions for PST (see Section 3 and Lemma 14.1 in \cite{Godsil2011StateTO}, respectively).

From Theorem \ref{lapint}, integer-weighted WHD graphs are Laplacian integral. Using a characterization of periodic vertices due to Godsil and Coutinho \cite[Corollary 7.6.2]{Coutinho2021}, we conclude that such graphs are periodic, and are therefore good candidates for PST. This prompts us to characterize perfect state transfer
in WHD graphs under mild conditions. But in order to do this, we first need to characterize strong cospectrality in WHD graphs. Henceforth, we assume all graphs are connected.

\subsection{Strong cospectrality}

Let $X$ be a graph with Laplacian matrix $L=L(X)$. Suppose the spectral decomposition of $L$ is given by $$L=\sum_{j=1}^r\lambda_jE_j,$$ where $E_j$ is the orthogonal projection matrix onto the eigenspace associated with $\lambda_j$, for each $j$. The \textit{eigenvalue support} $\sigma_u$ of a vertex $u$ is the set
\begin{equation*}
\sigma_u=\{\lambda_j:E_j\mathbf{e}_u\neq \mathbf{0}\}.
\end{equation*}
We say that two vertices $u$ and $v$ are \textit{cospectral} if $(E_j)_{u,u}=(E_j)_{v,v}$ for each $j$, and \textit{strongly cospectral} if $E_j\mathbf{e}_u=\pm E_j\mathbf{e}_v$ for each $j$, in which case we define the sets
\begin{equation*}
\sigma_{uv}^+=\{\lambda_j:E_j\mathbf{e}_u=E_j\mathbf{e}_v\}\quad \text{and}\quad \sigma_{uv}^-=\{\lambda_j:E_j\mathbf{e}_u=-E_j\mathbf{e}_v\}.
\end{equation*}
Cospectral vertices have the same eigenvalue supports. Moreover, strongly cospectral vertices are cospectral, but the converse is not true. For more about strong cospectrality, see Godsil and Smith \cite{godsil2024strongly}. 

The following result will be useful in characterizing strong cospectrality in WHD graphs. For each eigenvalue $\lambda$ of $L$, we let $W(\lambda)$ be an orthogonal basis of eigenvectors associated with $\lambda$. Some simple Python code that checks strong cospectrality according to Lemma~\ref{SC}  is available for download at \cite{sup}. 

\begin{lemma}
\label{SC}
Vertices $u$ and $v$ are strongly cospectral if and only if for each $\lambda\in\sigma_u$, either $\mathbf{x}(u)=\mathbf{x}(v)$ for all $\mathbf{x}\in W(\lambda)$ or $\mathbf{x}(u)=-\mathbf{x}(v)$ for all $\mathbf{x}\in W(\lambda)$. Moreover, if $u$ and $v$ are strongly cospectral, then
\begin{equation*}
\hspace{-0.1in} \sigma_{uv}^+=\{\lambda:\mathbf{x}(u)=\mathbf{x}(v)\neq 0\ \text{for all}\ \mathbf{x}\in W_{\lambda}\}\quad \text{and}\quad \sigma_{uv}^-=\{\lambda:\mathbf{x}(u)=-\mathbf{x}(v)\neq 0\ \text{for all}\ \mathbf{x}\in W_{\lambda}\}.
\end{equation*}
\end{lemma}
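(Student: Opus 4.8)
The plan is to work directly from the spectral decomposition $M = \sum_j \lambda_j E_j$, translating the projection-based definition of strong cospectrality into a statement about an orthogonal eigenbasis. Fix an eigenvalue $\lambda = \lambda_j$ and let $W(\lambda) = \{\textbf{w}_1, \ldots, \textbf{w}_m\}$ be an orthogonal basis of the corresponding eigenspace. Then the orthogonal projection is $E_j = \sum_{i=1}^m \frac{\textbf{w}_i \textbf{w}_i^T}{\|\textbf{w}_i\|^2}$, so that $E_j \textbf{e}_u = \sum_i \frac{\textbf{w}_i(u)}{\|\textbf{w}_i\|^2}\textbf{w}_i$ and similarly for $v$. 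Because the $\textbf{w}_i$ are linearly independent, the vector equation $E_j \textbf{e}_u = \pm E_j \textbf{e}_v$ holds if and only if the coefficients match, i.e. $\textbf{w}_i(u) = \pm \textbf{w}_i(v)$ for every $i$, with one global sign. This is the heart of the argument and I would state it as the first step.

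Next I would assemble the ``if'' direction: suppose that for each $\lambda \in \sigma_u(M)$ the entries $\textbf{x}(u), \textbf{x}(v)$ agree (all $+$) or are all opposite (all $-$) across $\textbf{x} \in W(\lambda)$. For $\lambda \in \sigma_u(M)$ the coefficient computation above gives $E_j \textbf{e}_u = \pm E_j \textbf{e}_v$ directly. For $\lambda \notin \sigma_u(M)$ we have $E_j \textbf{e}_u = \textbf{0}$ by definition of the eigenvalue support; here I need the small observation that cospectrality of $u$ and $v$ — which follows once we know strong cospectrality on the support, or can be extracted separately — forces $\sigma_u(M) = \sigma_v(M)$, hence $E_j \textbf{e}_v = \textbf{0}$ as well and the relation $E_j\textbf{e}_u = \pm E_j\textbf{e}_v$ holds trivially. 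Actually it is cleaner to argue: if $\lambda \notin \sigma_u(M)$ then $\textbf{x}(u) = 0$ for all $\textbf{x} \in W(\lambda)$ (since $E_j\textbf{e}_u = \sum_i \frac{\textbf{x}_i(u)}{\|\textbf{x}_i\|^2}\textbf{x}_i = 0$ with the $\textbf{x}_i$ independent), and one must separately know $\textbf{x}(v) = 0$ too; this is exactly where the hypothesis must implicitly be read as covering $\sigma_v(M)$ as well, or where one invokes that $u,v$ strongly cospectral on the common support already implies equal supports. For the ``only if'' direction, assume $u,v$ strongly cospectral; then for each $j$, $E_j\textbf{e}_u = \pm E_j\textbf{e}_v$, and reading off coefficients against the independent basis $W(\lambda_j)$ yields $\textbf{x}(u) = \pm\textbf{x}(v)$ with a uniform sign, which is the claimed condition.

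For the ``moreover'' clause, note that $\lambda \in \sigma_{uv}^+(M)$ means $E_j\textbf{e}_u = E_j\textbf{e}_v$ and $\lambda \in \sigma_u(M)$ (so this common vector is nonzero); by the coefficient identity this is equivalent to $\textbf{x}(u) = \textbf{x}(v)$ for all $\textbf{x} \in W(\lambda)$ together with not all of these being zero, i.e. $\textbf{x}(u) = \textbf{x}(v) \neq 0$ for some — equivalently, by the uniform-sign structure, characterizing the set as $\{\lambda : \textbf{x}(u) = \textbf{x}(v) \neq 0 \text{ for all } \textbf{x} \in W_\lambda\}$ once we observe that if one basis entry at $u$ is nonzero then (after possibly adjusting we cannot, but) the membership only requires nonvanishing of $E_j\textbf{e}_u$, which happens iff some $\textbf{x}(u) \neq 0$; I would phrase the final set description to match the paper's wording and note the equivalence. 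The symmetric statement gives $\sigma_{uv}^-(M)$.

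The main obstacle is bookkeeping around the zero/support cases: making precise that ``$E_j\textbf{e}_u = \pm E_j\textbf{e}_v$ for all $j$'' is equivalent to the per-eigenvalue entrywise sign condition \emph{restricted to the support}, and ensuring the quantifier ``for all $\textbf{x} \in W(\lambda)$'' interacts correctly with eigenvectors that vanish at $u$ (or $v$). The linear-independence argument converting a vector equality into coefficient equalities is routine; the care is entirely in the edge cases where projections vanish, and in matching the exact set-builder notation of the statement.
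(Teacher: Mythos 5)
Your overall route---expanding $E_\lambda\mathbf{e}_u=\sum_i\frac{\mathbf{w}_i(u)}{\|\mathbf{w}_i\|^2}\mathbf{w}_i$ over the orthogonal basis $W(\lambda)$ and matching coefficients by linear independence---is the right one (the paper states Lemma~\ref{SC} without proof, so there is no in-text argument to compare against), and your ``only if'' direction is complete. The genuine gap is in the ``if'' direction, at exactly the point you flag and then leave open: the hypothesis quantifies only over $\lambda\in\sigma_u(M)$, while strong cospectrality demands $E_\lambda\mathbf{e}_u=\pm E_\lambda\mathbf{e}_v$ for \emph{every} eigenvalue, so you must show $E_\lambda\mathbf{e}_v=\mathbf{0}$ whenever $\lambda\notin\sigma_u(M)$. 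Appealing to ``strong cospectrality on the support implies equal supports'' is circular---that is precisely the claim in question---and ``reading the hypothesis as also covering $\sigma_v(M)$'' changes the statement being proved. The missing step has a short fix: the vectors $E_\lambda\mathbf{e}_u$ are pairwise orthogonal and sum to $\mathbf{e}_u$, and on $\sigma_u(M)$ your coefficient argument already gives $E_\lambda\mathbf{e}_u=\pm E_\lambda\mathbf{e}_v$, so $1=\sum_\lambda\|E_\lambda\mathbf{e}_u\|^2=\sum_{\lambda\in\sigma_u(M)}\|E_\lambda\mathbf{e}_v\|^2\le\sum_\lambda\|E_\lambda\mathbf{e}_v\|^2=1$; equality forces $E_\lambda\mathbf{e}_v=\mathbf{0}$ for all $\lambda\notin\sigma_u(M)$. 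Without this (or some equivalent global argument) the stated equivalence is not established.

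The ``moreover'' clause is also not in order as you wrote it. What the coefficient comparison actually gives, for $\lambda$ in the support, is that $\lambda\in\sigma_{uv}^+(M)$ if and only if $\mathbf{x}(u)=\mathbf{x}(v)$ for all $\mathbf{x}\in W(\lambda)$ and $\mathbf{x}(u)\neq 0$ for \emph{at least one} $\mathbf{x}\in W(\lambda)$. Your claim that this is ``equivalently'' the condition $\mathbf{x}(u)=\mathbf{x}(v)\neq 0$ for \emph{all} $\mathbf{x}\in W_\lambda$ is not an equivalence for a general orthogonal basis: a two-dimensional eigenspace can have one basis vector with $\mathbf{x}(u)=\mathbf{x}(v)=1$ and a second vanishing at both $u$ and $v$, and your parenthetical ``after possibly adjusting we cannot, but'' shows you sensed this without resolving it. A careful write-up should prove the ``some nonzero'' characterization, note that for strongly cospectral vertices every $\mathbf{x}\in W(\lambda)$ vanishing at $u$ also vanishes at $v$ (so the only discrepancy concerns basis vectors vanishing at both vertices), and then say explicitly in what sense this matches the displayed set-descriptions, rather than asserting an equivalence the computation does not give.
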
  

Since $W_\lambda$ forms a basis for the eigenspace corresponding to the eigenvalue $\lambda$, the property that $\mathbf{x}(u)=\mathbf{x}(v)$ (similarly $\mathbf{x}(u)=-\mathbf{x}(v)$) for all $\mathbf{x}\in W_\lambda$ extends to every eigenvector in the eigenspace of $\lambda$. Hence while the preceding lemma requires analysing eigenvectors which form orthogonal bases to prove strong cospectrality, in principle, one could rule out strong cospectrality between two vertices $u$ and $v$ by finding a single eigenvector that does not have its $u$ and $v$-th components being equal up to a sign, or by finding two distinct eigenvectors associated with the same eigenvalue where the $u$- and $v$-th components are equal in one but are opposite in signs in the other.

The following result characterizes (strong) cospectrality in WHD graphs where the weak Hadamard has pairwise orthogonal columns. 

\begin{lemma}
\label{WHDSC}
Let $X$ be a weighted WHD graph For every eigenvalue $\lambda$ of $L(X)$, let $W(\lambda)$ be the set of columns of $P_X$ that are eigenvectors corresponding to $\lambda$.
\begin{enumerate}
\item The eigenvalue support of $u$ is $\sigma_u=\{\lambda:\mathbf{x}(u)\neq 0\ \text{for some}\ \mathbf{x}\in W(\lambda)\}$.
\item Assume $P_X$ has pairwise orthogonal columns. Vertices $u$ and $v$ are cospectral if and only if for every $\lambda\in\sigma_u$, either $W_u(\lambda)=W_v(\lambda)$ or
\begin{equation}
\label{cosp1}
\sum_{\mathbf{x}\in W_v(\lambda)\backslash W_u(\lambda)}\frac{1}{\|\mathbf{x}\|^2}=\sum_{\mathbf{x}\in W_u(\lambda)\backslash W_v(\lambda)}\frac{1}{\|\mathbf{x}\|^2},
\end{equation}
where $W_u(\lambda)=\{\mathbf{x}\in W(\lambda):\mathbf{x}(u)=0\}$ and $W_v(\lambda)=\{\mathbf{x}\in W(\lambda):\mathbf{x}(v)=0\}$.
\item Assume $P_X$ has pairwise orthogonal columns. Vertices $u$ and $v$ are strongly cospectral if and only if for each $\lambda\in\sigma_u$, we have
(i) $W_u(\lambda)=W_v(\lambda)$ and (ii) only one of $\{\mathbf{x}\in W(\lambda):\mathbf{x}(u)\mathbf{x}(v)=-1\}$ and $\{\mathbf{x}\in W(\lambda):\mathbf{x}(u)\mathbf{x}(v)=1\}$ is empty. Moreover, if $u$ and $v$ are strongly cospectral, then 
\begin{equation*}
\sigma_{uv}^+=\{\lambda:\mathbf{x}(u)\mathbf{x}(v)=1\ \text{for all}\ \mathbf{x}\in W(\lambda)\}
\end{equation*}
and
\begin{equation*}
\sigma_{uv}^-=\{\lambda:\mathbf{x}(u)\mathbf{x}(v)=-1\ \text{for all}\ \mathbf{x}\in W(\lambda)\}.
\end{equation*}
\end{enumerate}

\end{lemma}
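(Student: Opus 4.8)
\textbf{Proof plan for Lemma~\ref{WHDSC}.}

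The plan is to reduce everything to the structure of the projection matrices $E_\lambda$ expressed in terms of the columns of $P_X$. First I would establish part (1): since the columns of $P_X$ form a basis of eigenvectors of $L(X)$, the eigenspace for $\lambda$ is spanned by $W(\lambda)$, and a vector $\mathbf{e}_u$ has nonzero projection onto this eigenspace precisely when $\mathbf{e}_u$ is not orthogonal to all of $W(\lambda)$, i.e.\ when some $\mathbf{x}\in W(\lambda)$ has $\mathbf{x}(u)\neq 0$. This is the definition of $\sigma_u(L)$ unwound, so part (1) is essentially immediate and requires no orthogonality hypothesis. For parts (2) and (3) I would now use the hypothesis that $P_X$ has pairwise orthogonal columns, which gives the clean formula $E_\lambda=\sum_{\mathbf{x}\in W(\lambda)}\frac{1}{\|\mathbf{x}\|^2}\ketbra{\mathbf{x}}$, so that $(E_\lambda)_{u,u}=\sum_{\mathbf{x}\in W(\lambda)}\frac{\mathbf{x}(u)^2}{\|\mathbf{x}\|^2}$ and $(E_\lambda)_{u,v}=\sum_{\mathbf{x}\in W(\lambda)}\frac{\mathbf{x}(u)\mathbf{x}(v)}{\|\mathbf{x}\|^2}$; since entries of $P_X$ lie in $\{-1,0,1\}$, we have $\mathbf{x}(u)^2\in\{0,1\}$ and $\mathbf{x}(u)\mathbf{x}(v)\in\{-1,0,1\}$.

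For part (2), cospectrality of $u$ and $v$ means $(E_\lambda)_{u,u}=(E_\lambda)_{v,v}$ for every $\lambda$. Using the formula above this becomes $\sum_{\mathbf{x}\in W(\lambda)}\frac{\mathbf{x}(u)^2}{\|\mathbf{x}\|^2}=\sum_{\mathbf{x}\in W(\lambda)}\frac{\mathbf{x}(v)^2}{\|\mathbf{x}\|^2}$, and since $\mathbf{x}(u)^2=1$ exactly when $\mathbf{x}\notin W_u(\lambda)$, I would rewrite each side as $\sum_{\mathbf{x}\in W(\lambda)}\frac{1}{\|\mathbf{x}\|^2}-\sum_{\mathbf{x}\in W_u(\lambda)}\frac{1}{\|\mathbf{x}\|^2}$ (and analogously for $v$), cancel the common term, and cancel the contribution of $W_u(\lambda)\cap W_v(\lambda)$ from both sides, which yields exactly \eqref{cosp1}; the degenerate case where both remaining sums are empty is $W_u(\lambda)=W_v(\lambda)$. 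I would also note that $\lambda\notin\sigma_u(L)$ forces this identity trivially (both sides zero when combined with cospectrality), which is why restricting to $\lambda\in\sigma_u(L)$ loses nothing — though I should be careful here, since cospectral vertices have the same support, so $\sigma_u(L)=\sigma_v(L)$ and the quantification over $\sigma_u(L)$ is symmetric.

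For part (3), I would apply Lemma~\ref{SC}: $u$ and $v$ are strongly cospectral iff for each $\lambda\in\sigma_u(L)$ either $\mathbf{x}(u)=\mathbf{x}(v)$ for all $\mathbf{x}\in W(\lambda)$ or $\mathbf{x}(u)=-\mathbf{x}(v)$ for all $\mathbf{x}\in W(\lambda)$. Since the entries are in $\{-1,0,1\}$, $\mathbf{x}(u)=\mathbf{x}(v)$ combined with, say, $\mathbf{x}(u)\neq 0$ gives $\mathbf{x}(u)\mathbf{x}(v)=1$, while $\mathbf{x}(u)=\pm\mathbf{x}(v)$ for all $\mathbf{x}$ in particular forces $\mathbf{x}(u)=0\iff\mathbf{x}(v)=0$, i.e.\ $W_u(\lambda)=W_v(\lambda)$; conversely if $W_u(\lambda)=W_v(\lambda)$ and not both of the sign-sets are nonempty, one checks the two alternatives of Lemma~\ref{SC} hold. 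The description of $\sigma_{uv}^\pm(L)$ then follows by combining the characterization in Lemma~\ref{SC} (which writes $\sigma_{uv}^+$ as those $\lambda$ with $\mathbf{x}(u)=\mathbf{x}(v)\neq 0$ for all $\mathbf{x}\in W_\lambda$) with the entrywise equivalence $\mathbf{x}(u)=\mathbf{x}(v)\neq 0\iff\mathbf{x}(u)\mathbf{x}(v)=1$, valid since entries lie in $\{-1,0,1\}$. The main obstacle I anticipate is bookkeeping in part (3): carefully reconciling the ``for all $\mathbf{x}$'' quantifier of Lemma~\ref{SC} with the existence/emptiness statements (i) and (ii), in particular checking that condition (i) is forced rather than separately assumed, and making sure the case $\mathbf{x}(u)=\mathbf{x}(v)=0$ is consistently handled on both sides of each iff.
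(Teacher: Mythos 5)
Your proposal is correct and follows essentially the same route as the paper: express $E_\lambda$ through the columns of $P_X$, use the $\{-1,0,1\}$ entries and pairwise orthogonality to turn $(E_\lambda)_{u,u}=(E_\lambda)_{v,v}$ into the sum condition \eqref{cosp1} for part (2), and translate Lemma~\ref{SC} entrywise for part (3) and the description of $\sigma_{uv}^{\pm}(L)$. The only (harmless) deviation is in part (1), where you argue directly that $E_\lambda\mathbf{e}_u\neq 0$ iff $\mathbf{e}_u$ is non-orthogonal to the spanning set $W(\lambda)$, which cleanly avoids the Gram--Schmidt step the paper uses there.
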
  

\begin{proof}
Let $\lambda$ be an eigenvalue of $L(X)$ with $W(\lambda)=\{\mathbf{x}_1,\ldots,\mathbf{x}_m\}$. Orthogonalizing $W(\lambda)$ yields an orthogonal basis of eigenvectors $\widetilde{W}(\lambda)=\{\widetilde{\mathbf{x}}_1,\ldots,\widetilde{\mathbf{x}}_m\}$ for the eigenspace associated with $\lambda$, and so $E_{\lambda}=\sum_{j=1}^m\frac{1}{\|\widetilde{\mathbf{x}}_{j}\|^2}\widetilde{\mathbf{x}}_{j}\widetilde{\mathbf{x}}_{j}^T$. This gives us
\begin{equation}
\label{cosp}
(E_{\lambda})_{u,u}=\sum_{j=1}^m\frac{\widetilde{\mathbf{x}}_{j}(u)^2}{\|\widetilde{\mathbf{x}}_{j}\|^2}.
\end{equation}
As $E_{\lambda}\mathbf{e}_u\neq 0$ if and only if $(E_{\lambda})_{u,u}>0$, (\ref{cosp}) implies that $\lambda\in\sigma_u$ if and only if $\widetilde{\mathbf{x}}_{j}(u)\neq 0$ for some $j$. Now, if $\lambda\in\sigma_u$ and $\ell$ is the smallest index such that $\widetilde{\mathbf{x}}_{\ell}(u)\neq 0$, then $\widetilde{\mathbf{x}}_{j}(u)=0$ for each $j\in\{1,\ldots,\ell\}$, and so $\mathbf{x}_{j}(u)=0$ for each $j\in\{1,\ldots,\ell\}$. Consequently, $\mathbf{x}_{j+1}(u)=\widetilde{\mathbf{x}}_{j+1}(u)\neq 0$, and so (1) holds.

The assumption in (2) implies that $W(\lambda)$ is now an orthogonal basis of eigenvectors for the eigenspace associated with $\lambda$. Since $\mathbf{x}(u)^2=1$ whenever $\mathbf{x}\not\in W_u(\lambda)$ and $\mathbf{x}(v)^2=1$ whenever $\mathbf{x}\not\in W_v(\lambda)$, we get
\begin{equation*}
\begin{split}
(E_{\lambda})_{u,u}-(E_{\lambda})_{v,v}&\stackrel{(*)}{=}\sum_{\mathbf{x}\in W_v(\lambda)\backslash W_u(\lambda)}\frac{\mathbf{x}(u)^2
}{\|\mathbf{x}\|^2}-\sum_{\mathbf{x}\in W_u(\lambda)\backslash W_v(\lambda)}\frac{\mathbf{x}(v)^2}{\|\mathbf{x}\|^2}+\sum_{\mathbf{x}\notin W_u(\lambda)\cup W_v(\lambda)}\frac{\mathbf{x}(u)^2-\mathbf{x}(v)^2}{\|\mathbf{x}\|^2}\\
&\stackrel{(**)}{=}\sum_{\mathbf{x}\in W_v(\lambda)\backslash W_u(\lambda)}\frac{1}{\|\mathbf{x}\|^2} -\sum_{\mathbf{x}\in W_u(\lambda)\backslash W_v(\lambda)}\frac{1}{\|\mathbf{x}\|^2}.
\end{split}
\end{equation*}
If $W_u(\lambda)=W_v(\lambda)$, then $(*)$ above yields $(E_{\lambda})_{u,u}-(E_{\lambda})_{v,v}=\sum_{\mathbf{x}\notin W_u(\lambda)\cup W_v(\lambda)}\frac{\mathbf{x}(u)^2-\mathbf{x}(v)^2}{\|\mathbf{x}\|^2}=0$, which results in cospectrality between $u$ and $v$. Otherwise, $u$ and $v$ are cospectral if and only if the right hand side of $(**)$ is zero. This proves (2). Finally, one can check that the conditions in (3) are equivalent to those in Lemma \ref{SC}, which guarantees strong cospectrality between $u$ and $v$.
\end{proof}

\begin{figure}[h]
\centering
\begin{subfigure}{.32\textwidth}
  \centering
  \begin{tikzpicture}
  [scale=0.95,auto=left,
  ns/.style={circle,fill=white!20,draw=black,minimum size=0.9cm,line width = 1pt,scale=0.65},
  rec/.style={rectangle,fill=gray!25,draw=black,minimum size=0.9cm,line width = 1pt,scale=0.65},
  st/.style={star, star points = 6,fill=gray!25,draw=black,minimum size=1cm,line width = 1pt,scale=0.65},
  blank/.style={circle,fill=white!20,minimum size=2mm},
  es/.style={draw=black, line width = 1pt},
  dash/.style={draw=black},
  every loop/.style={}
   ]
  \node[rec] (n3) at (0.6,3.4) {3};
  \node[ns] (n1) at (2,4) {1};
  \node[ns] (n2) at (3.4,3.4) {2};
  \node[rec] (n4) at (4,2) {4};
  \node[st] (n6) at (3.4,0.6) {6};
  \node[ns] (n8) at (2,0) {8};
  \node[ns] (n7) at (0.6,0.6) {7};
  \node[st] (n5) at (0,2) {5};
  
  \draw[es]  (n3) edge node{} (n4);
  \draw[es]  (n3) edge node{} (n6);
  \draw[es]  (n3) edge node{} (n8);
  \draw[es]  (n3) edge node{} (n7);
  \draw[es]  (n3) edge node{} (n5);
  
  \draw[es]  (n1) edge node{} (n6);
  \draw[es]  (n1) edge node{} (n8);
  \draw[es]  (n1) edge node{} (n7);
  \draw[es]  (n1) edge node{} (n5);

  \draw[es]  (n2) edge node{} (n6);
  \draw[es]  (n2) edge node{} (n8);
  \draw[es]  (n2) edge node{} (n7);
  \draw[es]  (n2) edge node{} (n5);

  \draw[es]  (n4) edge node{} (n6);
  \draw[es]  (n4) edge node{} (n8);
  \draw[es]  (n4) edge node{} (n7);
  \draw[es]  (n4) edge node{} (n5);

  \draw[es]  (n5) edge node{} (n6);
  \end{tikzpicture}
  \caption{}
  \label{fig:sub2}
\end{subfigure}
\begin{subfigure}{.32\textwidth}
  \centering
\begin{tikzpicture}
  [scale=0.95,auto=left,
  ns/.style={circle,fill=white!20,draw=black,minimum size=0.9cm,line width = 1pt,scale=0.65},
  rec/.style={rectangle,fill=gray!25,draw=black,minimum size=0.9cm,line width = 1pt,scale=0.65},
  st/.style={star, star points = 6,fill=gray!25,draw=black,minimum size=1cm,line width = 1pt,scale=0.65},
  cl/.style={cloud,fill=gray!25,draw=black,minimum size=1cm,line width = 1pt,scale=0.65},
  blank/.style={circle,fill=white!20,minimum size=2mm},
  es/.style={draw=black, line width = 1pt},
  dash/.style={draw=black},
  every loop/.style={}
   ]
  \node[rec] (n3) at (0.6,3.4) {3};
  \node[ns] (n1) at (2,4) {1};
  \node[ns] (n2) at (3.4,3.4) {2};
  \node[rec] (n4) at (4,2) {4};
  \node[cl] (n8) at (3.4,0.6) {8};
  \node[cl] (n7) at (2,0) {7};
  \node[st] (n6) at (0.6,0.6) {6};
  \node[st] (n5) at (0,2) {5};
  
  \draw[es]  (n3) edge node{} (n4);
  \draw[es]  (n3) edge node{} (n8);
  \draw[es]  (n3) edge node{} (n7);
  \draw[es]  (n3) edge node{} (n6);
  \draw[es]  (n3) edge node{} (n5);
  
  \draw[es]  (n1) edge node{} (n8);
  \draw[es]  (n1) edge node{} (n7);
  \draw[es]  (n1) edge node{} (n6);
  \draw[es]  (n1) edge node{} (n5);

  \draw[es]  (n2) edge node{} (n8);
  \draw[es]  (n2) edge node{} (n7);
  \draw[es]  (n2) edge node{} (n6);
  \draw[es]  (n2) edge node{} (n5);

  \draw[es]  (n4) edge node{} (n8);
  \draw[es]  (n4) edge node{} (n7);
  \draw[es]  (n4) edge node{} (n6);
  \draw[es]  (n4) edge node{} (n5);

  \draw[es]  (n8) edge node{} (n6);
  \draw[es]  (n8) edge node{} (n5);

  \draw[es]  (n7) edge node{} (n6);
  \draw[es]  (n7) edge node{} (n5);

\end{tikzpicture}
  \caption{}
  \label{fig:sub3}
\end{subfigure}
\begin{subfigure}{.32\textwidth}
  \centering
\begin{tikzpicture}
  [scale=0.95,auto=left,
  ns/.style={circle,fill=gray!25,draw=black,minimum size=0.9cm,line width = 1pt,scale=0.65},
  rec/.style={rectangle,fill=gray!25,draw=black,minimum size=0.9cm,line width = 1pt,scale=0.65},
  st/.style={star, star points = 6,fill=gray!25,draw=black,minimum size=1cm,line width = 1pt,scale=0.65},
  cl/.style={cloud,fill=gray!25,draw=black,minimum size=1cm,line width = 1pt,scale=0.65},
  blank/.style={circle,fill=white!20,minimum size=2mm},
  es/.style={draw=black, line width = 1pt},
  dash/.style={draw=black},
  every loop/.style={}
   ]
  \node[rec] (n3) at (0.6,3.4) {3};
  \node[ns] (n1) at (2,4) {1};
  \node[ns] (n2) at (3.4,3.4) {2};
  \node[rec] (n4) at (4,2) {4};
  \node[st] (n8) at (3.4,0.6) {8};
  \node[cl] (n7) at (2,0) {7};
  \node[cl] (n6) at (0.6,0.6) {6};
  \node[st] (n5) at (0,2) {5};
  
  \draw[es]  (n3) edge node{} (n4);
  \draw[es]  (n3) edge node{} (n7);
  \draw[es]  (n3) edge node{} (n5);
  \draw[es]  (n3) edge node{} (n6);
  \draw[es]  (n3) edge node{} (n8);
  
  \draw[es]  (n1) edge node{} (n2);
   \draw[es]  (n1) edge node{} (n8);
  \draw[es]  (n1) edge node{} (n5);
  \draw[es]  (n1) edge node{} (n7);
  \draw[es]  (n1) edge node{} (n6);

  \draw[es]  (n2) edge node{} (n8);
  \draw[es]  (n2) edge node{} (n5);
  \draw[es]  (n2) edge node{} (n7);
  \draw[es]  (n2) edge node{} (n6);

  \draw[es]  (n4) edge node{} (n8);
  \draw[es]  (n4) edge node{} (n5);
  \draw[es]  (n4) edge node{} (n7);
  \draw[es]  (n4) edge node{} (n6);

  \draw[es]  (n7) edge node{} (n6);
  \draw[es]  (n8) edge node{} (n5);
  \draw[es]  (n8) edge node{} (n6);
  \draw[es]  (n7) edge node{} (n5);

\end{tikzpicture}
  \caption{}
  \label{fig:sub4}
\end{subfigure}
\caption{The graphs (a) $(O_2\sqcup K_2)\vee (O_2\sqcup K_2)$, (b) $(O_2\sqcup K_2)\vee C_4$ and (c) $(K_2\sqcup K_2)\vee C_4$.}
\label{fig:test1}
\end{figure}
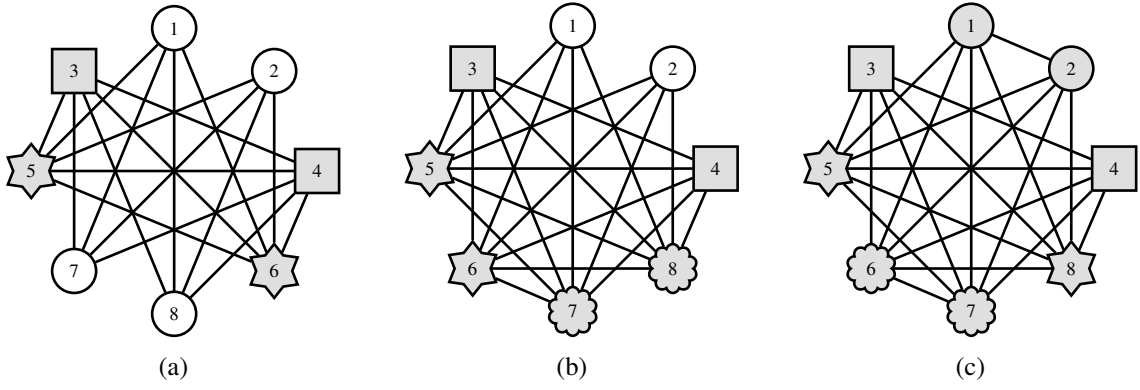

\begin{remark}
If $X$ is a Hadamard diagonalizable graph, i.e., $P_X$ is a Hadamard matrix, then, for each eigenvalue $\lambda$ of $L(X)$, we have $W_1(\lambda)=W_2(\lambda)=\varnothing$ because Hadamard matrices do not have zero entries. Invoking Lemma \ref{WHDSC}, we get that any two vertices in a Hadamard diagonalizable graph are cospectral.
\end{remark}

Lemma \ref{WHDSC}(2) can be used to determine cospectral vertices that are not strongly cospectral. Indeed, any pair of cospectral vertices satisfying (\ref{cosp1}) for some eigenvalue $\lambda$ of $L(X)$ are not strongly cospectral. Moreover, as cospectral vertices have the same eigenvalue supports, if $X$ has $\lambda$ as a simple eigenvalue with associated eigenvector $\mathbf{x}$, then $u$ and $v$ are not cospectral whenever $\mathbf{x}(u)\neq 0$ and $\mathbf{x}(v)=0$.

The following example concretely illustrates Lemma \ref{WHDSC}(2-3).

\begin{example}
\label{cosp}
Consider the graphs (a) $(O_2\sqcup K_2)\vee (O_2\sqcup K_2)$, (b) $(O_2\sqcup K_2)\vee C_4$, (c) $(K_2\sqcup K_2)\vee C_4$ in Figure \ref{fig:test1} and (d) $K_8\backslash e$ with $e=[1,2]$. These are not Hadamard diagonalizable, but their Laplacians are diagonalized by the weak Hadamard matrix $R=[\mathbf{x}_1\ \mathbf{x}_2\ \mathbf{x}_3\ \mathbf{x}_4\ \mathbf{x}_5\ \mathbf{x}_6\ \mathbf{x}_7\ \mathbf{x}_8]$ in Example \ref{eight}. From rows 19, 11, 12, and 5 of Table \ref{tab}, the associated eigenvalues for columns $\mathbf{x}_1,\ldots,\mathbf{x}_8$ of $R$ are $0,8,4,4,6,4,4,6$ for (a), $0,8,4,4,6,6,8,6$ for (b), $0,8,6,4,6,6,8,6$ for (c) and $0,8,6,8,8,8,8,8$ for (d), respectively. In (a), $\mathbf{x}_3$ and $\mathbf{x}_4$ are eigenvectors associated with the eigenvalue 4 satisfying $\mathbf{x}_3(1)=-\mathbf{x}_3(2)$ and $\mathbf{x}_4(1)=\mathbf{x}_4(2)$. Since this violates Lemma \ref{WHDSC}(3ii), vertices 1 and 2 in (a) are not strongly cospectral. The same argument can be used to check that vertices 7 and 8 in (a), and vertices 1 and 2 in (b), and any pair of vertices in $\{3,\ldots,8\}$ in (d) are not strongly cospectral. One can then check by inspection that for (a), vertices 3 and 4 (shaded squares), and 5 and 6 (shaded stars) are strongly cospectral with $\sigma_{j,j+1}^+(L)=\{0,4,8\}$ and $\sigma_{j,j+1}^-(L)=\{6\}$ for each $j\in\{3,5\}$. For (b), vertices 3 and 4 (shaded squares), 5 and 6 (shaded stars) and 7 and 8 (shaded clouds) are strongly cospectral with $\sigma_{3,4}^+(L)=\{0,4,8\}$ and $\sigma_{3,4}^-(L)=\{6\}$, and $\sigma_{j,j+1}^+(L)=\{0,8\}$ and $\sigma_{j,j+1}^-(L)=\{6\}$ for $j\in\{5,7\}$. For (c), vertices 1 and 2 (shaded circles), 3 and 4 (shaded squares), 5 and 6 (shaded stars) and 7 and 8 (shaded clouds) are strongly cospectral $\sigma_{j,j+1}^+(L)=\{0,4,8\}$ and $\sigma_{j,j+1}^-(L)=\{6\}$ for $j\in\{1,3\}$, and $\sigma_{j,j+1}^+(L)=\{0,8\}$ and $\sigma_{j,j+1}^-(L)=\{6\}$ for $j\in\{5,7\}$. Lastly, for (d), vertices 1 and 2 are strongly cospectral with $\sigma_{1,2}^+(L)=\{0,8\}$ and $\sigma_{1,2}^-(L)=\{6\}$.

In (d), if $u\neq 1,2$, then $\sigma_u=\{0,8\}$. Since $W_3(8)=W_4(8)=\{\mathbf{x}_6,\mathbf{x}_7,\mathbf{x}_8\}$, Lemma \ref{WHDSC}(2) implies that vertices 3 and 4 are cospectral. Furthermore, $W_3(8)\backslash W_5(8)=\{\mathbf{x}_4,\mathbf{x}_5\}$ and $W_5(8)\backslash W_3(8)=\{\mathbf{x}_6,\mathbf{x}_7\}$, and since $\|x_4\|=\|x_7\|$ and $\|x_5\|=\|x_6\|$, Equation~(\ref{cosp1}) holds, and so vertices 3 and 5 are cospectral by Lemma \ref{WHDSC}(2). The same argument shows that 3 is cospectral with 6, 7 and 8. Thus, while no pair of vertices in (d) are strongly cospectral except for 1 and 2, any two vertices in $\{3,\ldots,8\}$ in (d) 
are cospectral. 
\end{example}

The next example yields an infinite family of WHD graphs that are not Hadamard diagonalizable containing strongly cospectral vertices.

\begin{example}
\label{sckne}
While $K_n\backslash e$ is WHD for all $n\geq 4$ \cite[Lemma 4.8]{adm2021}, any weak Hadamard matrix diagonalizing $L(K_n\backslash e)$ need not have pairwise orthogonal columns whenever $n\not\equiv 0$ (mod 4). Nonetheless, the non-adjacent vertices of $K_n\backslash e$ are strongly cospectral \cite[Corollary 6.9(2)]{MonterdeELA}, and these two vertices are the only strongly cospectral pair in $K_n\backslash e$ for all $n\geq 3$.
\end{example}

\subsection{Perfect state transfer}

We now characterize PST in WHD graphs with $P_X$ having pairwise orthogonal columns. Some simple Python code implementing Theorem~\ref{PST1}  is available for download at \cite{sup}. 


\begin{theorem}
\label{PST1}
Let $X$ be a weighted graph. Suppose $S=[\mathbf{x}_1,\ldots,\mathbf{x}_n]$ has pairwise orthogonal columns and $L(X)=SDS^{-1}$, where $D=\text{diag}(\lambda_1,\ldots,\lambda_n)$. Then perfect state transfer occurs between two vertices $u$ and $v$ in $X$ at time $\tau$ if and only if for each eigenvalue $\lambda_j$ of $L(X)$,
\begin{equation}
\label{charPST}
e^{i\tau\lambda_j}\mathbf{x}_j(u)=\mathbf{x}_j(v).
\end{equation}
If we add that $X$ is WHD, then we may write (\ref{charPST}) as
\begin{equation}
\label{charPST1}
e^{i\tau\lambda_j}=\mathbf{x}_j(u)\mathbf{x}_j(v).
\end{equation}
\end{theorem}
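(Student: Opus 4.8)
The plan is to make the transition matrix completely explicit using the orthogonal-column structure of $S$, and then to compare the coordinates of $U(\tau)\mathbf{e}_u$ and of $\mathbf{e}_v$ in the eigenbasis $\{\mathbf{x}_1,\dots,\mathbf{x}_n\}$. First, since $S$ is invertible with pairwise orthogonal (hence nonzero) columns, Proposition~\ref{inv} gives $S^{-1}=Q^{-1}S^T$ with $Q=\operatorname{diag}(\|\mathbf{x}_1\|^2,\dots,\|\mathbf{x}_n\|^2)$. Substituting into $L(X)=SDS^{-1}$ and exponentiating yields
\[
U(\tau)=e^{i\tau L(X)}=S\,e^{i\tau D}\,Q^{-1}S^T=\sum_{j=1}^n\frac{e^{i\tau\lambda_j}}{\|\mathbf{x}_j\|^2}\,\mathbf{x}_j\mathbf{x}_j^T,
\]
where the last equality remains valid when eigenvalues repeat, because columns of $S$ lying in a common eigenspace are mutually orthogonal, so the corresponding rank-one terms add up to the spectral projector. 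Applying this to $\mathbf{e}_u$ gives $U(\tau)\mathbf{e}_u=\sum_j\|\mathbf{x}_j\|^{-2}e^{i\tau\lambda_j}\mathbf{x}_j(u)\,\mathbf{x}_j$, while expanding $\mathbf{e}_v$ in the orthogonal basis $\{\mathbf{x}_1,\dots,\mathbf{x}_n\}$ of $\mathbb{R}^n$ gives $\mathbf{e}_v=\sum_j\|\mathbf{x}_j\|^{-2}\mathbf{x}_j(v)\,\mathbf{x}_j$.

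Next, perfect state transfer from $u$ to $v$ at time $\tau$ means $U(\tau)\mathbf{e}_u=\gamma\mathbf{e}_v$ for some $\gamma\in\mathbb{C}$ with $|\gamma|=1$. Because $\{\mathbf{x}_j\}$ is a basis, coordinates are unique, so this is equivalent to $e^{i\tau\lambda_j}\mathbf{x}_j(u)=\gamma\mathbf{x}_j(v)$ for every $j$. To pin down $\gamma$, I use connectedness: the Laplacian eigenvalue $0$ is simple with eigenvector $\mathbf{1}$, so there is a unique index $k$ with $\lambda_k=0$, and its column $\mathbf{x}_k=c\mathbf{1}$ for some $c\neq 0$; the $j=k$ equation reads $c=\gamma c$, forcing $\gamma=1$. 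Conversely, if $e^{i\tau\lambda_j}\mathbf{x}_j(u)=\mathbf{x}_j(v)$ for all $j$, the two displayed expansions coincide, i.e.\ $U(\tau)\mathbf{e}_u=\mathbf{e}_v$, which is perfect state transfer. This establishes (\ref{charPST}).

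For the WHD addendum, note that when $X$ is WHD the columns $\mathbf{x}_j$ of $S=P_X$ have entries in $\{-1,0,1\}$. Since perfect state transfer forces $u$ and $v$ to be strongly cospectral (a standard necessary condition), Lemma~\ref{WHDSC}(3) gives $\mathbf{x}_j(u)=0$ if and only if $\mathbf{x}_j(v)=0$ for every $j$; hence the indices with $\mathbf{x}_j(u)=0$ contribute only the trivial identity $0=0$ in (\ref{charPST}) and can be discarded, which is why (\ref{charPST1}) is stated over the common eigenvalue support. For the remaining indices $\mathbf{x}_j(u)\in\{-1,1\}$, so $\mathbf{x}_j(u)^{-1}=\mathbf{x}_j(u)$, and dividing (\ref{charPST}) through by $\mathbf{x}_j(u)$ gives $e^{i\tau\lambda_j}=\mathbf{x}_j(u)\mathbf{x}_j(v)$; the converse direction is immediate upon multiplying back by $\mathbf{x}_j(u)$.

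I do not expect a serious obstacle: once the spectral decomposition of $U(\tau)$ is written down, everything is coordinate-matching. The two points that need a little care are (i) justifying the passage to $U(\tau)=\sum_j\|\mathbf{x}_j\|^{-2}e^{i\tau\lambda_j}\mathbf{x}_j\mathbf{x}_j^T$ in the presence of repeated eigenvalues, handled via within-eigenspace orthogonality of the columns, and (ii) correctly interpreting (\ref{charPST1}): it is only meaningful on the shared eigenvalue support of $u$ and $v$, so the strong-cospectrality necessary condition (equivalently Lemma~\ref{WHDSC}(3)) has to be invoked to bridge between the two forms, together with the observation that the phase $\gamma$ is forced to be $1$ by the all-ones column.
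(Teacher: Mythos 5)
Your proof is correct and follows essentially the same route as the paper: write $U(\tau)=Se^{i\tau D}S^{-1}$, use Proposition~\ref{inv} to get $S^{-1}=Q^{-1}S^T$, force $\gamma=1$ via the eigenvalue $0$ with eigenvector $\mathbf{1}$, and match coordinates componentwise in the orthogonal column basis. Your explicit handling of the zero entries via the eigenvalue support in the WHD step is in fact slightly more careful than the paper's brief remark, but it is the same argument.
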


\begin{proof}
Since $L=SDS^{-1}$, we have $U_L(t)=Se^{it D}S^{-1}$. Recall that PST occurs between $u$ and $v$ at time $\tau$ if and only if $U_L(t)\mathbf{e}_u=\gamma \mathbf{e}_v$. Equivalently, $e^{i\tau D}S^{-1}\mathbf{e}_u=\gamma S^{-1}\mathbf{e}_v$ for some unit $\gamma\in\mathbb{C}$. Since $0$ is an eigenvalue of $L(X)$ with eigenvector $\mathbf{1}$, we have $0\in\sigma_{uv}^+$. Now, since $e^{i\tau \lambda}$ is a phase factor for PST for all $\lambda\in\sigma_{uv}^+$, we conclude that $\gamma=1$, and so 
\begin{equation}
\label{PST}
e^{i\tau D}S^{-1}\mathbf{e}_u=S^{-1}\mathbf{e}_v.
\end{equation}
Now, let $Q=\text{diag}(\|\mathbf{x}_1\|^2,\ldots,\|\mathbf{x}_n\|^2)$. Since $S$ has pairwise orthogonal columns, some algebra  yields $S^{-1}=Q^{-1}S^T$, and so $S^{-1}\mathbf{e}_u=Q^{-1}S^T\mathbf{e}_u$. Pre-multiplying (\ref{PST}) to the right by $\mathbf{e}_j^T$ then yields $\frac{e^{it\lambda_j}}{\|x_j\|^2}\mathbf{x}_j(u)=\frac{1}{\|x_j\|^2}\mathbf{x}_j(v)$, which completes the proof.
\end{proof}

One important consequence of Theorem \ref{PST1} is that it can be used to construct weighted WHD graphs having PST at some specified time $\tau>0$. Indeed, if we fix $\tau>0$ and a normalized weak Hadamard $S$ with pairwise orthogonal columns, then by choosing the eigenvalues in $D$ to satisfy (\ref{charPST1}), $L = SDS^{-1}$ will be the Laplacian matrix of some rational-weighted graph with PST at time $\tau$. One can then scale $L$ if one wishes to obtain an integer-weighted graph.

We denote the largest power of two that divides an integer $z$ by $\nu_2(z)$. It is known that every integer $z$ can be written uniquely as $z=2^{\nu_2(z)}a$, where $a$ is odd. Using Coutinho's characterization of PST \cite[Theorem 2.4.4]{Coutinho2014}, one can show that Theorem \ref{PST1} can be restated in the context of WHD graphs as follows.

\begin{corollary}
\label{PST2}
Let $X$ be an integer-weighted WHD graph, where $P_X$ has pairwise orthogonal columns. Then perfect state transfer occurs between vertices $u$ and $v$ if and only if these two conditions hold. 
\begin{enumerate}
\item Vertices $u$ and $v$ are strongly cospectral with
\begin{equation*}
\sigma_{uv}^+=\{\lambda:\mathbf{x}(u)\mathbf{x}(v)=1\ \text{for all}\ \mathbf{x}\in W(\lambda)\}\ \text{and}\ \sigma_{uv}^-=\{\lambda:\mathbf{x}(u)\mathbf{x}(v)=-1\ \text{for all}\ \mathbf{x}\in W(\lambda)\}.
\end{equation*}
\item $\nu_2(\lambda_{j})>\nu_2(\lambda_{k})=\nu_2(\lambda_{\ell})$ for all $\lambda_{k},\lambda_{\ell}\in \sigma_{uv}^-$ and for all $\lambda_j\in \sigma_{uv}^+$ with $\lambda_j>0$.
\end{enumerate}
Moreover, if PST occurs between $u$ and $v$, then the minimum time it occurs is $\frac{\pi}{g}$, where $g=\operatorname{gcd}(\lambda_1,\ldots,\lambda_n)$.
\end{corollary}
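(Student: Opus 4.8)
The goal is to characterize integer-weighted WHD PST (with $P_X$ having pairwise orthogonal columns) via strong cospectrality plus a $2$-adic valuation condition on the eigenvalues, and to identify the minimum PST time. The natural strategy is to translate Theorem \ref{PST1}, which already gives the clean criterion $e^{i\tau\lambda_j}=\textbf{x}_j(u)\textbf{x}_j(v)$ for all $j$, into number-theoretic conditions on the $\lambda_j$. First I would observe that the right-hand side $\textbf{x}_j(u)\textbf{x}_j(v)$ only ever takes the values $+1$, $-1$, or $0$; the zero case corresponds exactly to $\lambda_j\notin\sigma_u(L)$ (equivalently $\textbf{x}_j(u)=0$, since $P_X$ has pairwise orthogonal columns, using Lemma \ref{WHDSC}(1)), so for PST the equation forces $\textbf{x}_j(u)\textbf{x}_j(v)\neq 0$ precisely on the eigenvalue support of $u$, and on that support it equals $\pm1$. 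That $\pm1$ pattern is, by Lemma \ref{WHDSC}(3), exactly strong cospectrality, with the stated $\sigma_{uv}^{\pm}(L)$ sets; this yields condition (1) and shows it is necessary.

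Next I would feed this into Coutinho's characterization of Laplacian PST \cite[Theorem 2.4.4]{Coutinho2014}: for an integer-weighted Laplacian-integral graph, PST between strongly cospectral $u,v$ occurs if and only if there is a time $\tau$ with $e^{i\tau\lambda}=1$ for every $\lambda\in\sigma_{uv}^+(L)$ and $e^{i\tau\lambda}=-1$ for every $\lambda\in\sigma_{uv}^-(L)$, and this system is solvable exactly under the $2$-adic valuation condition in (2). Concretely, writing $\tau=\pi/(2^s b)$ appropriately (or invoking the standard lemma), $e^{i\tau\lambda}=1$ demands $\tau\lambda\in 2\pi\mathbb Z$ and $e^{i\tau\lambda}=-1$ demands $\tau\lambda\in\pi+2\pi\mathbb Z$; comparing the $2$-adic valuations of the eigenvalues in $\sigma_{uv}^+$ and $\sigma_{uv}^-$ shows that all negative-sign eigenvalues must share a common valuation $\nu_2(\lambda_k)=\nu_2(\lambda_\ell)$, and every nonzero positive-sign eigenvalue must have strictly larger valuation, which is precisely (2). (Laplacian integrality is guaranteed here by Theorem \ref{lapint}.) Conversely, if (1) and (2) hold, the valuation constraints let one construct such a $\tau$, and then Theorem \ref{PST1} gives PST.

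For the minimum time: since all $\lambda_j$ are integers, $e^{i\tau\lambda_j}=\textbf{x}_j(u)\textbf{x}_j(v)$ for all $j$ reduces to $\tau\lambda_j\equiv 0$ or $\pi \pmod{2\pi}$ according to the sign, and combining with the constraint coming from $\sigma_{uv}^-(L)$ (which is nonempty for genuine PST, $u\neq v$) shows the smallest admissible $\tau$ is $\pi/g$ where $g=\gcd(\lambda_1,\dots,\lambda_n)$: one checks $\tau=\pi/g$ satisfies all the congruences (each $\lambda_j/g$ is an integer, and the valuation condition forces $\lambda_j/g$ even on $\sigma_{uv}^+$ and odd on $\sigma_{uv}^-$), and no smaller positive $\tau$ works because $\tau\lambda_k\in\pi+2\pi\mathbb Z$ for $\lambda_k\in\sigma_{uv}^-$ already forces $\tau\geq \pi/\lambda_k\geq\pi/g$ once one accounts for the gcd over the full support. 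I expect the main obstacle to be the bookkeeping in matching our $\sigma_{uv}^{\pm}(L)$ description (phrased via columns of $P_X$) with the hypotheses of Coutinho's theorem and in pinning down the minimum-time claim rigorously — in particular verifying that $g=\gcd(\lambda_1,\dots,\lambda_n)$ (the gcd over \emph{all} eigenvalues, including those outside the support and the zero eigenvalue, which contributes nothing) is the correct normalizing constant rather than, say, the gcd over only $\sigma_{uv}^+(L)\cup\sigma_{uv}^-(L)$; this needs the observation that eigenvalues outside $\sigma_u(L)$ impose no condition yet, by the valuation hypothesis applied across the whole spectrum, do not lower the attainable minimum.
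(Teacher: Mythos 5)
Your overall route is the same as the paper's: the paper obtains this corollary by combining Theorem \ref{PST1} (equivalently Lemma \ref{WHDSC}(3) for the strong-cospectrality part) with Coutinho's characterization of perfect state transfer \cite[Theorem 2.4.4]{Coutinho2014}, which is exactly what you do, and your derivation of conditions (1) and (2) along these lines is sound (your per-column identification of $\textbf{x}_j(u)=0$ with $\lambda_j\notin\sigma_u(L)$ is imprecise, since a single column of $W(\lambda_j)$ can vanish at $u$ while another does not, but the argument survives because (\ref{charPST}) forces the zero patterns at $u$ and $v$ to agree column by column).

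The one genuine gap is in your minimum-time argument. The chain ``$\tau\lambda_k\in\pi+2\pi\mathbb Z$ forces $\tau\geq \pi/\lambda_k\geq \pi/g$'' is backwards: since $g\mid\lambda_k$ one has $\pi/\lambda_k\leq\pi/g$, so this gives no lower bound of the required form. The correct lower bound comes from the support: every $\lambda\in\sigma_u(L)$ satisfies $\tau\lambda\in\pi\mathbb Z$, hence $\tau\,\gcd(\sigma_u(L))\in\pi\mathbb Z$ and $\tau\geq \pi/\gcd(\sigma_u(L))$. To land on the stated value $\pi/g$ with $g=\gcd(\lambda_1,\ldots,\lambda_n)$ you still must reconcile $\gcd(\sigma_u(L))$ with the gcd over the whole spectrum, and your closing claim that this follows ``by the valuation hypothesis applied across the whole spectrum'' does not work: condition (2) constrains only the eigenvalues in $\sigma_{uv}^{\pm}(L)$ and says nothing about eigenvalues outside the support, which are exactly the ones that could make $g$ strictly smaller than $\gcd(\sigma_u(L))$. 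So either prove that under the hypotheses these two gcds coincide, or do what the paper does and import the minimum-time statement directly from Coutinho's characterization (see also Remark \ref{remtime}), where the time is expressed through the eigenvalue support; as written, your final paragraph asserts rather than proves the key point you yourself identified as the main obstacle.
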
 

\begin{remark}
\label{remtime}
Whether $X$ is WHD or not, condition (2) and the statement about the minimum PST time in Corollary \ref{PST2} holds as long as $X$ is Laplacian integral.
\end{remark}

Our calculations indicate that condition 1 of Corollary~\ref{PST2} is generally the condition that fails when there is no PST in a WHD graph. We also remark that every PST time is an odd multiple of $\frac{\pi}{g}$.

For Laplacian integral graphs, it is clear from Corollary \ref{PST2}(2) and Remark \ref{remtime} that PST does not occur whenever all non-zero Laplacian eigenvalues are odd. In particular, vertex $u$ is not involved in PST if all eigenvalues in $\sigma_u$ are odd. Furthermore, PST does not occur between strongly cospectral vertices $u$ and $v$ whenever some $\lambda_j\in\sigma_{uv}^+$ is odd or $\nu_2(\lambda_j)\leq \nu_2(\lambda_k)$ for some $\lambda_j\in\sigma_{uv}^+$ and $\lambda_k\in\sigma_{uv}^-$.

Now, suppose $X$ is a Hadamard diagonalizable integer-weighted graph and $u$ is a vertex of $X$. Then all eigenvalues of $L$ are even integers \cite[Theorem 2]{johnston2017}. Thus, if $u$ exhibits PST in $X$ with minimum PST time $\tau=\frac{\pi}{2}$, then Corollary \ref{PST2} implies that all elements in $\sigma_{uv}^+$ are integers $\lambda\equiv 0$ (mod 4), while all elements in $\sigma_{uv}^-$ are integers $\lambda\equiv 2$ (mod 4). This yields the number-theoretic condition required of the eigenvalues in the support of a vertex in  graph to exhibit PST, which was first established by Johnston et.\ al \cite[Theorem 3]{johnston2017}. In fact, Theorem \ref{PST1} generalizes \cite[Theorem 3]{johnston2017} in two ways. First, it allows for arbitrary PST time instead of just $\frac{\pi}{2}$, and second, it extends the result to WHD graphs, instead of just Hadamard diagonalizable graphs.

\begin{example}
\label{pstex}
Consider the graphs $(O_2\sqcup K_2)\vee (O_2\sqcup K_2)$, $(O_2\sqcup K_2)\vee C_4$, $(K_2\sqcup K_2)\vee C_4$ and $K_8\backslash e$ with $e=[1,2]$ in Example \ref{cosp} whose Laplacians are diagonalized by the matrix $R$. Since $R$ has pairwise orthogonal columns, a direct application of Corollary~\ref{PST2} to these graphs shows that they all exhibit PST between strongly cospectral pairs of vertices with minimum time $\tau=\pi/2$. In $(O_2\sqcup K_2)\vee (O_2\sqcup K_2)$, these are vertices $j$ and $j+1$ for each $j\in\{3,5\}$; in $(O_2\sqcup K_2)\vee C_4$, these are vertices $j$ and $j+1$ for each $j\in\{3,5,7\}$; and in $(K_2\sqcup K_2)\vee C_4$, these are vertices $j$ and $j+1$ for each $j\in\{1,3,5,7\}$ (see graphs (a), (b) and (c) in Figure \ref{fig:test1}). Moreover, in $K_8\backslash e$, these are vertices $1$ and $2$. It is important to note that  $(O_2\sqcup K_2)\vee (O_2\sqcup K_2)$, $(O_2\sqcup K_2)\vee C_4$, and $K_8\backslash e$ are not regular and therefore not Hadamard diagonalizable, thus providing examples of bonafide weak Hadamard diagonalizable graphs having PST.
\end{example}

From Example \ref{pstex}, we observe that $O_4\vee K_4$, $K_8\backslash e$, $(O_2\sqcup K_2)\vee (O_2\sqcup K_2)$, $(O_2\sqcup K_2)\vee C_4$, and $(K_2\sqcup K_2)\vee C_4$ are WHD graphs that are not Hadamard diagonalizable having exactly zero, one, two, three, and four pairs of vertices that exhibit PST, respectively. This intriguing observation leads us to suspect that some WHD graphs are excellent sources of PST. As an initial investigation, we examined PST in WHD graphs on eight vertices whose diagonalizing weak Hadamard matrix has pairwise orthogonal columns (see the last column of Table \ref{tab} for a summary). As it turns out, there are only 3 amongst 23 such graphs that do not exhibit PST, namely $K_8$, $K_{4,4}$ and $O_4\vee K_4$. This suggests that in general, the subclass of WHD graphs with the property that the diagonalizing weak Hadamard matrix has pairwise orthogonal columns provides excellent sources of PST. Moreover, it is interesting to note that amongst the 20 graphs that exhibit PST, the vertices that pair up to exhibit PST are the same pairs that exhibit strong cospectrality.

\begin{table}
\begin{center}
\begin{tabularx}{1\textwidth} { 
  | >{\centering\arraybackslash\hsize=.1\hsize\linewidth=\hsize}X 
  | >{\raggedright\arraybackslash\hsize=1.2\hsize\linewidth=\hsize}X 
  | >{\centering\arraybackslash\hsize=.8\hsize\linewidth=\hsize}X 
  | >{\centering\arraybackslash\hsize=.3\hsize\linewidth=\hsize}X 
  | >{\centering\arraybackslash\hsize=.9\hsize\linewidth=\hsize}X
  | >{\centering\arraybackslash\hsize=.7\hsize\linewidth=\hsize}X | }
 \hline
	 & \ \ \ \ \ \ \ \ \ \ Graph & Diagonalized by & HD & Spectrum &  \# of PST pairs \\ \hline
  
     \text{1}& $O_4\vee K_4$ & R & No & $0,8,4,4,4,8,8,8$ & 0 \\ \hline
     \text{2}& $(O_2\sqcup K_2)\vee K_4$ & R & No & $0,8,4,4,6,8,8,8$ & 1 \\ \hline
     \text{3}& $(K_2\sqcup K_2)\vee K_4$ & R & No & $0,8,6,4,6,8,8,8$ & 2 \\ \hline
     \text{4}& $C_4\vee K_4$ & R & No & $0,8,6,8,6,8,8,8$ & 2 \\ \hline
     \text{5}& $K_4\backslash e\vee K_4\cong K_8\backslash e$ & R & No & $0,8,6,8,8,8,8,8$ & 1 \\ \hline
     \text{6}& $K_4\vee K_4\cong K_8 $ & R & Yes & $0,8,8,8,8,8,8,8$ & 0 \\ \hline
     \text{7}& $O_4\vee K_4\backslash e$ & R & No & $0,8,4,4,4,6,8,8$ & 1 \\ \hline
     \text{8}& $(O_2\sqcup K_2)\vee K_4\backslash e$ & R & No & $0,8,4,4,6,6,8,8$ & 2 \\ \hline
     \text{9}&  $(K_2\sqcup K_2)\vee K_4\backslash e$ & R & No & $0,8,6,4,6,6,8,8$ & 3 \\ \hline
     \text{10}& $O_4\vee C_4$ & R & No & $0,8,4,4,4,6,8,6$ & 2 \\ \hline
     \text{11}& $(O_2\sqcup K_2)\vee C_4$ & R & No & $0,8,4,4,6,6,8,6$ & 3 \\ \hline
     \text{12}& $(K_2\sqcup K_2)\vee C_4$ & R & No & $0,8,6,4,6,6,8,6$ & 4 \\ \hline
     \text{13}& $ C_4\vee C_4\cong K_{2,2,2,2}$ & R & Yes & $0,8,6,8,6,6,8,6$ & 4 \\ \hline
     \text{14}& $K_4\backslash e\vee C_4$ & R & No & $0,8,6,8,8,6,8,6$ & 3 \\ \hline
     \text{15}& $O_4\vee (K_2\sqcup K_2)$ & R & No & $0,8,4,4,4,6,4,6$ & 2 \\ \hline
     \text{16}& $(O_2\sqcup K_2)\vee (K_2\sqcup K_2)$ & R & No & $0,8,4,4,6,6,4,6$ & 3 \\ \hline
     \text{17}& $(K_2\sqcup K_2)\vee (K_2\sqcup K_2)$ & R & Yes & $0,8,6,4,6,6,4,6$ & 4 \\ \hline
     \text{18}& $O_4\vee (O_2\sqcup K_2)$ & R & No & $0,8,4,4,4,4,4,6$ & 1 \\ \hline
     \text{19}& $(O_2\sqcup K_2)\vee (O_2\sqcup K_2)$ & R & No & $0,8,4,4,6,4,4,6$ & 2 \\ \hline
     \text{20}& $O_4\vee O_4\cong K_{4,4}$ & R & Yes & $0,8,4,4,4,4,4,4$ & 0 \\ \hline
     \text{21}& $O_2\odot K_4$ (hypercube) & T & Yes & $0,4,4,4,6,2,2,2$ & 4 \\ \hline
     \text{22}& $K_2\square K_4\cong (O_2\odot K_4)^c$  & T & Yes & $0,4,4,4,2,6,6,6$  & 4 \\ \hline
     \text{23}& $(O_2\sqcup K_2) \odot K_4$ & T & No & $0,4,4,6,6,2,2,4$ & 2 \\ \hline
\end{tabularx}
\caption{\label{tab} WHD graphs on 8 vertices whose Laplacian matrices are diagonalized by a weak Hadamard with pairwise orthogonal columns, the corresponding weak Hadamard matrix that diagonalizes them ($R$ and $T$ are matrices given in Example \ref{eight}), whether they are Hadamard diagonalizable (HD), their corresponding spectra $\lambda_1,\ldots,\lambda_8$ with the $j$th column of either $R$ or $T$ as the associated eigenvector of $\lambda_j$ for each $j$, and their corresponding number of pairs of vertices that exhibit PST}
\end{center}
\end{table}

\subsection{Complements and joins}

The following result provides a sufficient condition for PST to occur in the complement of a Laplacian integral connected graph.

\begin{proposition}
\label{comp1}
Let $X$ be a connected unweighted graph such that $L(X)$ is diagonalized by a matrix $S$.
\begin{enumerate}
\item Vertices $u$ and $v$ in $X$ are strongly cospectral in $X$ if and only if they are strongly cospectral in $X^c$.
\item Suppose $X$ is a Laplacian integral graph and $S$ has pairwise orthogonal columns. If perfect state transfer occurs between $u$ and $v$ in $X$ at $\tau=\frac{\pi}{g}$, where $g$ is given in Corollary \ref{PST2}, then perfect state transfer occurs between $u$ and $v$ in $X^c$ at time $\tau$ if and only if $\frac{n}{g}$ is even.
\end{enumerate}
\end{proposition}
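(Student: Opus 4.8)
The plan is to run everything through the identity $L(X)+L(X^c)=L(K_n)=nI-J$. Since $X$ is connected, $\mathbf 1$ spans the $0$-eigenspace of $L(X)$ and is orthogonal to every other eigenvector, so $J$, $L(X)$ and $L(X^c)$ are simultaneously diagonalizable and in particular commute. Writing $E_0=\tfrac1nJ$ for the projection onto $\operatorname{span}(\mathbf 1)$ we have $nI-J=n(I-E_0)$, hence $e^{it(nI-J)}=E_0+e^{itn}(I-E_0)$, and therefore for every $t$,
\begin{equation*}
U_{L(X^c)}(t)=e^{it(nI-J)}\,e^{-itL(X)}=\bigl(E_0+e^{itn}(I-E_0)\bigr)\,U_{L(X)}(-t).
\end{equation*}

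For part (1): because $Jx=0$ for every eigenvector $x$ of $L(X)$ with nonzero eigenvalue, $L(X^c)$ has exactly the same eigenvectors as $L(X)$, with eigenvalue $\lambda>0$ sent to $n-\lambda$ and $\mathbf 1$ fixed at $0$; since $\lambda\mapsto n-\lambda$ is injective, the spectral idempotents of $L(X^c)$ attached to nonzero eigenvalues coincide with those of $L(X)$. When $n\notin\operatorname{spec}(L(X))$ the $0$-eigenprojection also coincides (it equals $E_0$ in both cases), so the strong-cospectrality conditions $E_j\mathbf e_u=\pm E_j\mathbf e_v$ transfer verbatim, and $\sigma_{uv}^{\pm}$ is relabelled by $\lambda\mapsto n-\lambda$ with $0$ fixed in $\sigma_{uv}^+$. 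When $X^c$ is disconnected, i.e. $n\in\operatorname{spec}(L(X))$, the only change is that the $0$-eigenspace of $L(X^c)$ is $\operatorname{span}(\mathbf 1)$ together with the $n$-eigenspace of $L(X)$; here I would invoke the eigenvector criterion of Lemma~\ref{SC}. Since $\mathbf 1(u)=\mathbf 1(v)$, a consistent sign for this enlarged eigenspace at the pair $u,v$ exists exactly when the $n$-eigenvectors of $L(X)$ take equal values (or both vanish) at $u$ and $v$, and a short case analysis of the join decomposition of $X$ (which exists because $X^c$ is disconnected) confirms this whenever $u,v$ are strongly cospectral in $X$. The converse direction, from $X^c$ back to $X$, needs no case analysis, since the projection onto the non-$\mathbf 1$ part of the kernel lands in $\mathbf 1^{\perp}$ and so cannot conspire with $\tfrac1n\mathbf 1$.

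For part (2): assume PST occurs in $X$ between $u$ and $v$ at $\tau=\tfrac\pi g$, so $U_{L(X)}(\tau)\mathbf e_u=\gamma\mathbf e_v$. Pairing with $\mathbf 1$ (which is fixed by $U_{L(X)}(\tau)$ because $\mathbf 1$ is a $0$-eigenvector) gives $1=\mathbf 1^{T}\mathbf e_u=\mathbf 1^{T}U_{L(X)}(\tau)\mathbf e_u=\gamma$, so $\gamma=1$; taking entrywise complex conjugates (as $L(X)$ is real symmetric) yields $U_{L(X)}(-\tau)\mathbf e_u=\overline{\gamma}\,\mathbf e_v=\mathbf e_v$. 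Substituting into the displayed formula with $t=\tau$,
\begin{equation*}
U_{L(X^c)}(\tau)\mathbf e_u=\bigl(E_0+e^{i\tau n}(I-E_0)\bigr)\mathbf e_v=e^{i\tau n}\mathbf e_v+\frac{1-e^{i\tau n}}{n}\,\mathbf 1 .
\end{equation*}
For PST in $X^c$ this vector must be a unit scalar multiple of $\mathbf e_v$; since $n\ge 2$ there is a coordinate $w\neq v$ on which $\mathbf e_v$ vanishes while $\mathbf 1$ does not, forcing $\tfrac{1-e^{i\tau n}}{n}=0$, i.e. $e^{i\tau n}=1$. Conversely, if $e^{i\tau n}=1$ the right-hand side is exactly $\mathbf e_v$, so PST occurs in $X^c$ at time $\tau$ (with phase $1$). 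Finally $e^{i\pi n/g}=1$ if and only if $\tfrac ng\in 2\mathbb Z$, that is, $\tfrac ng$ is even, which completes the argument.

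I expect the main obstacle to be the disconnected-complement case of part (1): handling the enlargement of the kernel of $L(X^c)$ and reconciling it with $\mathbf 1$ requires the extra bookkeeping via the join structure (and implicitly the standing assumption that the ambient graph is connected), whereas part (2) is essentially forced once the exponential of $nI-J$ is written out and the phase is pinned down to $1$.
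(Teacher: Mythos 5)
Your part (2) is correct, and it is essentially an operator-level repackaging of the paper's argument: the paper diagonalizes, notes that $L(X^c)$ has the same eigenvectors with eigenvalues $\lambda_j\mapsto n-\lambda_j$, and applies the eigenvector condition of Theorem~\ref{PST1} to both $X$ and $X^c$, while you exponentiate $nI-J$ directly and compare $U_{L(X^c)}(\tau)\mathbf e_u$ with $\mathbf e_v$ coordinatewise. The two routes hinge on the same facts (the phase is pinned to $1$ via the eigenvector $\mathbf 1$, exactly as in the proof of Theorem~\ref{PST1}, and $e^{i\tau n}=1\iff n/g$ even); your version has the small advantage of not actually needing the pairwise-orthogonal-columns hypothesis.

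Part (1) is where there is a genuine gap, and it sits exactly where you suspected. In the forward direction with $X^c$ disconnected (i.e.\ $n\in\operatorname{spec}(L(X))$, so the $0$-eigenspace of $L(X^c)$ is $\operatorname{span}(\mathbf 1)$ plus the $n$-eigenspace of $L(X)$), you assert that ``a short case analysis of the join decomposition of $X$ confirms'' that every $n$-eigenvector takes equal values at $u$ and $v$. That is precisely the nontrivial claim, it is not carried out, and as stated it is false in general: for $X=K_2$ the two vertices are strongly cospectral in $X$ with $n=2\in\sigma_{uv}^-(L(X))$, yet in $X^c=O_2$ the kernel projection is the identity and $\mathbf e_1\neq\pm\mathbf e_2$, so they are not strongly cospectral in $X^c$. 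For $n\geq 3$ the claim does hold, but it needs an argument, e.g.: the $n$-eigenspace of $L(X)$ is $\operatorname{span}\{\chi_{C_i}\}\cap\mathbf 1^\perp$ where $C_1,\dots,C_c$ are the components of $X^c$, so $E_n\mathbf e_u=\tfrac1{|C_{i(u)}|}\chi_{C_{i(u)}}-\tfrac1n\mathbf 1$; if some $n$-eigenvector had opposite nonzero signs at $u,v$, they lie in different components, $E_n\mathbf e_u=-E_n\mathbf e_v$ forces $c=2$ with $|C_1|=|C_2|=n/2$, and then every eigenprojection for $\lambda\notin\{0,n\}$ applied to $\mathbf e_u$ (resp.\ $\mathbf e_v$) is supported on one side of the join and must vanish by strong cospectrality, giving $\mathbf e_u=\tfrac2n\chi_{C_1}$ and hence $n=2$. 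With that in hand, the enlarged kernel of $L(X^c)$ passes the test of Lemma~\ref{SC} with the $+$ sign, and your converse direction (the non-$\mathbf 1$ part of the kernel lies in $\mathbf 1^\perp$) is fine. For comparison, the paper disposes of (1) by citing Proposition~\ref{comp} only, which also does not address the merging of the $0$- and $n$-eigenspaces; so your diagnosis of the delicate point is sharper than the paper's one-line proof, but your write-up, like the paper's, leaves it unproven.
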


\begin{proof}
Invoking Proposition \ref{comp} yields (1). To show (2), let $S=[\mathbf{1},\mathbf{x}_2,\ldots,\mathbf{x}_n]$, and $\lambda_1=0,\lambda_2,\ldots,\lambda_n$ be the eigenvalues of $L(X)$ with corresponding eigenvectors $\mathbf{1},\mathbf{x}_2,\ldots,\mathbf{x}_n$. The eigenvalues of $L(X^c)$ are $0,n-\lambda_2,\ldots,n-\lambda_n$, where $n-\lambda_j$ and $\lambda_j$ have the same eigenvectors for each $j\geq 2$. Since PST occurs between $u$ and $v$ at time $\tau$, Theorem \ref{PST1} guarantees that $e^{i\tau\lambda_j}\mathbf{x}_j(u)=\mathbf{x}_j(v)$ for all $j\geq 2$. Now, observe that $e^{i\tau(n-\lambda_j)}\mathbf{x}(u)=\mathbf{x}(j)$ holds if and only if $e^{i\tau n}=1$. Since $X$ is Laplacian integral, $\tau=\frac{\pi}{g}$ from Remark \ref{remtime}. Thus, $e^{i\tau n}=1$ if and only if $\frac{n}{g}$ is even.
\end{proof}

To illustrate Proposition \ref{comp1}(2), consider $X=K_1\sqcup K_2$, which is a disconnected WHD graph with $P_X$ having pairwise orthogonal columns. Note that $K_2$ exhibits PST at $\frac{\pi}{2}$. Since the Laplacian eigenvalues of $X$ are $0$ and $2$, it follows that $g=2$, and so $\frac{n}{g}=\frac{3}{2}$ is not even. Consequently, $(K_1\sqcup K_2)^c=P_3$ does not exhibit PST between end vertices, which is a well-known result in quantum state transfer.

Consider again $Z_k=X\vee \ldots \vee X$, which is the $k$-fold join of $X$ with itself. Denote by $(u,j)$ the copy of $u\in V(X)$ in the $j$th copy of $X$ in $Z_k$. Our next result provides a sufficient condition for strong cospectrality and PST to occur in $Z_k$.

\begin{proposition}
\label{QSTjoin}
Let $X$ be a weighted graph.
\begin{enumerate}
    \item If $u$ and $v$ are strongly cospectral in $X$, then $(u,j)$ and $(v,j)$ are strongly cospectral in $Z_k$ with
\begin{equation*}
\sigma_{(u,j),(v,j)}^+=\{\lambda+(k-1)n:0<\lambda\in\sigma_{uv}^+(L(X))\}\cup\{0,kn\}
\end{equation*}  
and
\begin{equation*}
\sigma_{(u,j),(v,j)}^-=\{\lambda+(k-1)n:\lambda\in \sigma_{uv}^-(L(X))\}.
\end{equation*}   
\item Let $X$ be a Laplacian integral graph such that $L(X)$ is diagonalized by a matrix $S$ with pairwise orthogonal columns. If perfect state transfer occurs between vertices $u$ and $v$ in $X$, then it occurs between vertices $(u,j)$ and $(v,j)$ in $Z_k$ if and only if $\frac{n}{g}$ is even, where $g$ is given in Corollary \ref{PST2}.
\end{enumerate}
\end{proposition}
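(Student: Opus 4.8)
The plan is to base everything on the spectral decomposition of $L(Z_k)$, which can be obtained either by iterating Theorem~\ref{thm:join} or directly from the Kronecker form
\[
L(Z_k)=I_k\otimes L(X)+(k-1)n\,I_{kn}+(I_k-J_k)\otimes J_n ,
\]
whose three summands commute because $L(X)\mathbf{1}=\mathbf{0}$ forces $L(X)J_n=J_nL(X)=0$. Diagonalizing them simultaneously, with the columns $\mathbf{1},\mathbf{x}_2,\dots,\mathbf{x}_n$ of $S$ (eigenvalues $0,\lambda_2,\dots,\lambda_n$ of $L(X)$) in the second factor and an orthogonal basis $\mathbf{1}_k,\mathbf{c}_2,\dots,\mathbf{c}_k$ of $\mathbb{R}^k$ in the first, the eigenvalues of $L(Z_k)$ are $0$ (eigenvector $\mathbf{1}_{kn}$), $kn$ with multiplicity $k-1$ (eigenvectors $\mathbf{c}\otimes\mathbf{1}_n$, $\mathbf{c}\perp\mathbf{1}_k$), and $\lambda_t+(k-1)n$ with multiplicity $k$ for $t\ge2$ (eigenvectors $\mathbf{c}\otimes\mathbf{x}_t$). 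In particular $Z_k$ is Laplacian integral, and the spectral idempotents are $E_0^{Z_k}=\tfrac1{kn}J_{kn}$, $E_{kn}^{Z_k}=(I_k-\tfrac1kJ_k)\otimes\tfrac1nJ_n$, and $E_{\mu+(k-1)n}^{Z_k}=I_k\otimes E_\mu^X$ for each eigenvalue $\mu>0$ of $L(X)$ (with the exception noted below).

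For part~(1) I would apply these idempotents to $\mathbf{e}_{(u,j)}=\mathbf{e}_j\otimes\mathbf{e}_u$. Then $E_{\mu+(k-1)n}^{Z_k}\mathbf{e}_{(u,j)}=\mathbf{e}_j\otimes E_\mu^X\mathbf{e}_u$, which by strong cospectrality of $u$ and $v$ in $X$ equals $\pm\,E_{\mu+(k-1)n}^{Z_k}\mathbf{e}_{(v,j)}$ with the sign inherited from $X$, and is nonzero exactly when $\mu\in\sigma_u(L(X))$; meanwhile $E_0^{Z_k}$ and $E_{kn}^{Z_k}$ send $\mathbf{e}_{(u,j)}$ and $\mathbf{e}_{(v,j)}$ to the same nonzero vector (here $k\ge2$ is used). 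By Lemma~\ref{SC} this yields strong cospectrality of $(u,j)$ and $(v,j)$, with $0$ and $kn$ landing in $\sigma^+$ and the remaining eigenvalues keeping their $X$-signs, which is precisely the stated description of the supports. The delicate point is the coincidence $\mu=n$ (when $n$ itself is an eigenvalue of $L(X)$): there $E_{kn}^{Z_k}$ also picks up the term $I_k\otimes E_n^X$, and one must use that $n\notin\sigma_{uv}^-(L(X))$ so the two contributions do not conflict in sign — equivalently, so that $kn$ is not forced into both $\sigma^+$ and $\sigma^-$. I expect this to be the single case requiring an extra line (or a mild extra hypothesis).

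For part~(2) I would feed the decomposition into Theorem~\ref{PST1} using the diagonalizing matrix $C\otimes S$ of $L(Z_k)$, where $C=[\mathbf{1}_k,\mathbf{c}_2,\dots,\mathbf{c}_k]$ is any real matrix with pairwise orthogonal columns (so $C\otimes S$ has pairwise orthogonal columns, and such $C$ exists for every $k$). Fix $j$. Since $\mathbf{c}_1(j)=1\ne0$ and the vectors $\mathbf{c}_2,\dots,\mathbf{c}_k$ cannot all vanish in coordinate $j$ (the subspace $\{\mathbf{c}\perp\mathbf{1}_k:\mathbf{c}(j)=0\}$ has dimension $k-2<k-1$), and since for $t\ge2$ the relation~\eqref{charPST} attached to the column $\mathbf{c}_i\otimes\mathbf{x}_t$ is independent of $i$, Theorem~\ref{PST1} collapses to: PST occurs between $(u,j)$ and $(v,j)$ at time $\tau$ iff $e^{i\tau kn}=1$ and $e^{i\tau(\lambda_t+(k-1)n)}\mathbf{x}_t(u)=\mathbf{x}_t(v)$ for all $t\ge2$. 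Using the first relation to replace $e^{i\tau(k-1)n}$ by $e^{-i\tau n}$, this is equivalent to: $\zeta:=e^{i\tau n}$ satisfies $\zeta^k=1$ and $e^{i\tau\lambda_t}\mathbf{x}_t(u)=\zeta\,\mathbf{x}_t(v)$ for all $t$.

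Now the hypothesis enters. Since $X$ has PST between $u$ and $v$, Corollary~\ref{PST2} places it at $\tau_0=\pi/g$, so $e^{i\tau_0\lambda_t}\mathbf{x}_t(u)=\mathbf{x}_t(v)$ for all $t$, and every PST time of $X$ is an odd multiple of $\tau_0$. If $n/g$ is even then $e^{i\tau_0 n}=e^{i\pi n/g}=1$, so the pair $(\tau,\zeta)=(\tau_0,1)$ satisfies the conditions above and $Z_k$ has the PST. Conversely, if $Z_k$ has PST at some $\tau$, then using strong cospectrality to write $\mathbf{x}_t(u)=s(\lambda_t)\mathbf{x}_t(v)$ with $s(\lambda_t)\in\{\pm1\}$ and dividing the $Z_k$-condition by $e^{i\tau_0\lambda_t}=s(\lambda_t)$ gives $e^{i(\tau-\tau_0)\lambda_t}=\zeta$ for every nonzero $\lambda_t\in\sigma_u(L(X))$; since $\tau_0=\pi/g$ and $\zeta$ is a $k$-th root of unity, I would then push through a $2$-adic valuation argument — equivalently, run the $\nu_2$-criterion of Corollary~\ref{PST2}(2) against the supports found in part~(1), with a short case split on $\nu_2((k-1)n)$ versus the common valuation of $\sigma_{uv}^-(L(X))$ — to force $\zeta=1$, whence $\tau$ is a PST time of $X$, so $\tau=(2m+1)\pi/g$ and $1=e^{i\tau n}=e^{i\pi(2m+1)n/g}$ gives that $n/g$ is even. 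The hard part will be exactly this last step: showing that the root of unity $\zeta$ is forced to be trivial, which is where the interplay between $n$, $g$, and $k$ does the real work.
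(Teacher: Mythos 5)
Your proposal is correct in substance, and it is considerably more self-contained than the paper's own proof, which disposes of part (1) by citing \cite[Lemma 4]{Monterde2022} and of part (2) by re-running the computation from Proposition \ref{comp1}. Your simultaneous diagonalization of $L(Z_k)=I_k\otimes L(X)+(k-1)n\,I_{kn}+(I_k-J_k)\otimes J_n$ and the resulting idempotents give a direct proof of (1), and the edge case you flag ($n$ itself a Laplacian eigenvalue of $X$ with $n\in\sigma_{uv}^-(L(X))$) is genuine: there the two displayed support formulas cannot both hold, and strong cospectrality of $(u,j)$ and $(v,j)$ can in fact fail (take $X=K_2$, so $Z_k=K_{2k}$), so the statement implicitly excludes that situation; the paper buries this inside the citation. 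For part (2), your reduction via Theorem \ref{PST1} applied to $C\otimes S$ --- PST between $(u,j)$ and $(v,j)$ at time $\tau$ iff $e^{i\tau kn}=1$ and $e^{i\tau(\lambda_t+(k-1)n)}\mathbf{x}_t(u)=\mathbf{x}_t(v)$ for all $t\ge 2$ --- is exactly the computation the paper performs (shifted eigenvalues from Theorem \ref{thm:join} plus the new eigenvalue $kn$ in $\sigma^+$), and your sufficiency step at $\tau=\pi/g$ coincides with it.

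The one place you stall --- forcing $\zeta=e^{i\tau n}=1$ for an \emph{arbitrary} PST time $\tau$ of $Z_k$ --- is not something the paper proves either. Proposition \ref{comp1}(2), whose argument is being reused, is explicitly a statement about PST at the fixed time $\tau=\pi/g$, and part (2) here is proved (and should be read) the same way: at $\tau=\pi/g$ your two conditions collapse, in both directions, to $e^{i\pi n/g}=1$, i.e.\ to $n/g$ being even, so nothing further is needed to match the paper. If you do want the arbitrary-time statement, be aware it is strictly stronger and would need the $\nu_2$-analysis you sketch carried out in full, with the added wrinkle that $g$ in Corollary \ref{PST2} is the gcd of \emph{all} Laplacian eigenvalues rather than only those in the eigenvalue support of $u$; that extra work is optional, not a defect relative to the paper's proof.
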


\begin{proof}

Invoking \cite[Lemma 4]{kirkland2023quantum} yields (1). To prove (2), let $0,\lambda_2,\ldots,\lambda_n$ be the eigenvalues of $L(X)$. Then by Theorem \ref{thm:join}, $0,\lambda_2+(k-1)n,\ldots,\lambda_n+(k-1)n,kn$ are the eigenvalues of $L(Z_k)$, Since $kn\in \sigma_{(u,j),(v,j)}^+$, applying the same argument in the proof of Proposition \ref{comp1} yields the desired result.
\end{proof}

\begin{remark}
Propositions \ref{comp1} and \ref{QSTjoin} both apply to WHD graphs with $P_X$ having orthogonal columns. In particular, if $X$ is unweighted Laplacian integral and either (i) $n$ and $g$ are powers of two or (ii) $n$ is doubly even and $g=2$, then we get that $\frac{n}{g}$ is even. In this case, PST in $X$ guarantees PST in $Z_k$ and $X^c$.
\end{remark}

We end this section by providing infinite families of WHD graphs that exhibit PST. Recall that both $C_4$ and $K_2\sqcup K_2$ exhibit PST with minimum time $\pi/2$ between two pairs of vertices, both $K_4\backslash e$ and $O_2\sqcup K_2$ exhibit PST with minimum time $\pi/2$ between a pair of vertices, and $K_4$ and $O_4$ do not exhibit PST.

\begin{example}
Let $k=2^{\ell}$ and $X_{(k)}=\bigsqcup_{j=1}^kX_j$, where $X_j\in\{K_4,C_4,K_2\sqcup K_2,O_4,K_4\backslash e,O_2\sqcup K_2\}$. By Example \ref{uncomp}, $\mathcal{F}=\{X_{(k)}:k=2^{\ell},\ell\geq 1\}$ is an infinite family of disconnected WHD graphs, where $X_{(k)}^c$ is also WHD with $P_{X_{(k)}}=P_{X_{(k)}^c}$ having pairwise orthogonal columns. The following hold.
\begin{enumerate}
\item If $X_j\in\{C_4,K_2\sqcup K_2,K_4\backslash e,O_2\sqcup K_2\}$ for at least one $j$, then $X_{(k)}$ exhibits PST with minimum time $\pi/2$. Since $g=2$ and $n$ is even, Proposition \ref{comp1} implies that $X_{(k)}^c$ exhibits PST with minimum time $\pi/2$ between the same pairs of vertices. In particular, the number of pairs of vertices in $X_{(k)}$ that exhibit PST is $2a+b$, where $a$ is the number of copies of $C_4$ and $K_2\sqcup K_2$ in $X_{(k)}$ and $b$ is the number of copies of $K_4\backslash e$ and $O_2\sqcup K_2$ in $X_{(k)}$. 
\item If $X_j\in\{K_4,O_4\}$ for each $j$, then $X_{(k)}$ does not exhibit PST.
\end{enumerate}
Moreover, neither $X_{(k)}$ nor $X_{(k)}^c$ are Hadamard diagonalizable whenever at least two $X_j$'s are distinct. Hence, if each $X_{(k)}\in \mathcal{F}$ has the property that $X_j\in\{C_4,K_2\sqcup K_2,K_4\backslash e,O_2\sqcup K_2\}$ for at least one $j$ and at least two $X_j$'s are distinct, then $\{X_{(k)}^c:k=2^{\ell},\ell\geq 1\}$ is an infinite family of WHD graphs that exhibit PST whereby each $X_{(k)}^c$ is not Hadamard diagonalizable but $P_{X_{(k)}^c}$ has pairwise orthogonal columns.

\end{example}

\begin{example}
Let $k=2^{\ell}$ and suppose $X\in\{C_4,K_2\sqcup K_2,K_4\backslash e,O_2\sqcup K_2\}$. From Example \ref{n=4}, each $Z_k$ is WHD and $P_{Z_k}$ has pairwise orthogonal columns. Since $n=4$ and $g=2$, a direct application of Proposition \ref{QSTjoin}(2) yields PST in each $Z_k$ at time $\pi/2$. Moreover, the number of pairs vertices that exhibits PST in $Z_k$ is $2k$ whenever $X\in \{C_4,K_2\sqcup K_2\}$, while it is $k$ whenever $X\in \{K_4\backslash e,O_2\sqcup K_2\}$. Moreover, if $X\in \{K_4\backslash e,O_2\sqcup K_2\}$, then $\{Z_k:k=2^{\ell},\ell\geq 1\}$ is an infinite family of WHD graphs that exhibit PST whereby each $Z_k$ is not Hadamard diagonalizable but $P_{Z_k}$ has pairwise orthogonal columns.
\end{example}

\begin{example}
Denote the transition matrix of a graph $Z$ by $U_{Z}(t)$. From \cite[Lemma 4.2]{Godsil2011StateTO}, it is known that $U_{X\square Y}(t)=U_X(t)\otimes U_Y(t)$. Consequently, PST occurs between $(u,w)$ and $(v,x)$ in $X\square Y$ at time $\tau$ whenever PST occurs between $u$ and $v$ in $X$ and $w$ and $x$ in $Y$ at time $\tau$.

Let $\square_{j=1}^kX_j:=X_1\square X_2\square\cdots\square X_k$. For each $k\geq 1$, let $Y_k=\square_{j=1}^kX_j$, where each $X_j\in\{C_4,K_2\sqcup K_2,K_4\backslash e,O_2\sqcup K_2\}$ and $X_j\in\{K_4\backslash e,O_2\sqcup K_2\}$ for at least one $j$. Since each $X_j$ exhibits PST, say between $u_j$ and $v_j$, we get that PST occurs in $Y_k$ between $(u_1,\ldots,u_k)$ and $(v_1,\ldots,v_k)$. As $P_{X_j}$ has pairwise orthogonal columns for each $j$, Theorem \ref{prod} implies that $\{Y_k:k\geq 1\}$ is an infinite family of WHD graphs that exhibit PST whereby each $Y_k$ is not Hadamard diagonalizable but $P_{Y_k}$ has pairwise orthogonal columns.
\end{example}

\section{Conclusion and Future Work}
This work uses the initial study of weak Hadamard matrices  in \cite{adm2021} as a stepping stone. First, we investigated algebraic and combinatorial properties of weak Hadamard matrices, providing numerous methods of constructing such matrices, and exploring  the idea of equivalent weak Hadamard matrices. We then turned our attention to weakly Hadamard diagonalizable graphs, providing numerous results for graph unions, complements, joins, and merges. Lastly, we explored quantum state transfer in WHD graphs in great detail, focusing on strong cospectrality and perfect state transfer, providing infinite families of WHD graph exhibiting PST, and providing numerous examples illustrating the power of our results through graph complements and joins. We provide Python code for some of the technical computations in \cite{sup}. 

The study of weak Hadamard matrices and WHD graphs is still very much in its infancy. We identify several open problems, the answer to which would help propel the study of these concepts forward. 

With respect to weak Hadamard matrices: Further to Theorem \ref{thm:EquivalentWeak}, it is an open problem  to characterize the number of equivalent weak Hadamard matrices attained through any operation (permutation of columns and/or rows, as well as negation of columns and/or rows). It would  be of interest to find the exact number of non-equivalent weak Hadamard matrices for reasonably small $n$ as well as the exact number of non-equivalent  weak Hadamard matrices with pairwise orthogonal columns, as well as the corresponding number of non-isomorphic graphs that these matrices diagonalize (a lower bound for such graphs for $n\leq 9$ can be inferred from \cite{adm2021}). 

An open problem pertinent to our work was recently posed \cite{over}: if $\textbf1$ together with $n-1$ non-zero vectors with entries  in $\{-1,0,1\}$ are mutually orthogonal, does it follow that $n\in \{1,2\}\cup \{4k\,:\, k\in \mathbb N\}$?   M.~Alekseyev has shown that the answer is to the affirmative for small $n$ (when $n\leq 12$), and I.~Bogdanov has shown the answer is to the affirmative when $n$ is an odd prime, but the general case remains elusive. If a graph can be diagonalized by a weak Hadamard matrix, then it can be diagonalized by a normalized weak Hadamard matrix. So, this open problem expands on Conjecture~\ref{conj:odd} to include any dimension $n\not\equiv 0$ mod 4, with $n>2$. 

For weakly Hadamard diagonalizable graphs, it would be of interest to compare, for reasonably small $n$,  the number of Hadamard diagonalizable graphs (this is known from \cite{breen} for $n\leq 36$), the number of weakly Hadamard diagonalizable graphs,  and, in particular, the number of weakly Hadamard diagonalizable graphs with $P_X$ having pairwise orthogonal columns. Although we have provided infinite families of WHD graphs, and methods of constructing WHD graphs, it would be useful to see just how prevalent these graphs are in comparison to Hadamard diagonalizable graphs.  

Finally, given the connection to quantum state transfer, it would be of interest to find the number of connected graphs that are Hadamard diagonalizable, weakly Hadamard diagonalizable, and weakly Hadamard diagonalizable  with $P_X$ having pairwise orthogonal columns,  that exhibit perfect state transfer, for relatively small $n$.

\section*{Acknowledgements}
 H.\ Monterde is supported by the University of Manitoba Faculty of Science and Faculty of Graduate Studies. S.\ Plosker is supported by NSERC Discovery grant number RGPIN-2019-05276, the Canada Research Chairs Program grant number 101062, and the  Canada Foundation for Innovation grant number 43948.

\bibliographystyle{plain}
\bibliography{references}

\begin{thebibliography}{10}

\bibitem{sup}
Python code.
\newblock Available with the arXiv preprint of this manuscript, see “other
  formats” of this manuscript.

\bibitem{adm2021}
Mohammad Adm, Khawla Almuhtaseb, Shaun Fallat, Karen Meagher, Shahla Nasserasr,
  Mahsa~N Shirazi, and AS~Razafimahatratra.
\newblock Weakly {H}adamard diagonalizable graphs.
\newblock {\em Linear Algebra and its Applications}, 610:86--119, 2021.

\bibitem{barik2011}
S~Barik, S~Fallat, and S~Kirkland.
\newblock On {H}adamard diagonalizable graphs.
\newblock {\em Linear algebra and its applications}, 435(8):1885--1902, 2011.

\bibitem{breen}
Jane Breen, Steve Butler, Melissa Fuentes, Bernard Lidick{\`y}, Michael
  Phillips, Alex~WN Riasanovksy, Sung~Yell Song, Ralihe~R Villagr{\'a}n, Cedar
  Wiseman, and Xiaohong Zhang.
\newblock Hadamard diagonalizable graphs of order at most 36.
\newblock {\em Electronic Journal of Combinatorics}, 29(2):P2--16, 2022.

\bibitem{Caputo}
J-G Caputo, I~Khames, and Arnaud Knippel.
\newblock On graph {L}aplacian eigenvectors with components in $\{$- 1, 0,
  1$\}$.
\newblock {\em Discrete Applied Mathematics}, 269:120--129, 2019.

\bibitem{chan2020complex}
Ada Chan, Shaun Fallat, Steve Kirkland, Jephian C-H Lin, Shahla Nasserasr, and
  Sarah Plosker.
\newblock Complex {H}adamard diagonalisable graphs.
\newblock {\em Linear Algebra and its Applications}, 605:158--179, 2020.

\bibitem{Coutinho2014}
Gabriel Coutinho.
\newblock {Quantum State Transfer in Graphs}.
\newblock {\em PhD. Dissertation}, 2014.

\bibitem{Coutinho2021}
Gabriel Coutinho and Chris Godsil.
\newblock {Graph Spectra and Continuous Quantum Walks}.
\newblock 2021.

\bibitem{Godsil2011StateTO}
Chris Godsil.
\newblock State transfer on graphs.
\newblock {\em Discrete Mathematics}, 312:129--147, 2011.

\bibitem{godsil2024strongly}
Chris~D Godsil and Jamie Smith.
\newblock Strongly cospectral vertices.
\newblock {\em Australas. J Comb.}, 88:1--21, 2024.

\bibitem{over}
Nathaniel Johnston.
\newblock Orthogonal vectors with entries from $\{-1,0,1\}$.
\newblock
  \url{https://mathoverflow.net/questions/446995/orthogonal-vectors-with-entries-from-1-0-1}.
\newblock Accessed: 2023-05-23.

\bibitem{johnston2017}
Nathaniel Johnston, Steve Kirkland, Sarah Plosker, Rebecca Storey, and Xiaohong
  Zhang.
\newblock Perfect quantum state transfer using {H}adamard diagonalizable
  graphs.
\newblock {\em Linear Algebra and its Applications}, 531:375--398, 2017.

\bibitem{johnston2023laplacian}
Nathaniel Johnston and Sarah Plosker.
\newblock Laplacian \{-1, 0, 1\}-and \{-1, 1\}-diagonalizable graphs.
\newblock {\em arXiv preprint arXiv:2308.15611}, 2023.

\bibitem{kirkland2023quantum}
Steve Kirkland, Hermie Monterde, and Sarah Plosker.
\newblock Quantum state transfer between twins in weighted graphs.
\newblock {\em Journal of Algebraic Combinatorics}, 58(3):623--649, 2023.

\bibitem{MonterdeELA}
Hermie Monterde.
\newblock Strong cospectrality and twin vertices in weighted graphs.
\newblock {\em The Electronic Journal of Linear Algebra}, pages 494--518, 2022.

\end{thebibliography}

\end{document}